\newcounter{cst}
\def \ctel#1{C_{\refstepcounter{cst}\label{#1}\thecst}} %declares it
\def \cter#1{C_{\ref{#1}}} %subsequent references
\newcommand*{\mathcolor}{}
\def\mathcolor#1#{\mathcoloraux{#1}}
\newcommand*{\mathcoloraux}[3]{%
  \protect\leavevmode
  \begingroup
    \color#1{#2}#3%
  \endgroup
} % usage: \mathcolor{colorname}{mathstuff without dollar signs}
\definecolor{labelkey}{rgb}{0.6,0,1}
\definecolor{violet}{rgb}{0.580,0.,0.827}
\newtheorem{theorem}{Theorem}[section] 
\newtheorem{remark}[theorem]{Remark}
\newtheorem{lemma}[theorem]{Lemma}
\newtheorem{definition}[theorem]{Definition}
\newtheorem{proposition}[theorem]{Proposition}
\newtheorem{corollary}[theorem]{Corollary}
\numberwithin{equation}{section} % for section equation numbering
\def\bhyp#1{\begin{equation}\label{#1}\begin{array}{c}}
\def\ehyp{\end{array}\end{equation}}
\def\Xint#1{\mathchoice
{\XXint\displaystyle\textstyle{#1}}%
{\XXint\textstyle\scriptstyle{#1}}%
{\XXint\scriptstyle\scriptscriptstyle{#1}}%
{\XXint\scriptscriptstyle\scriptscriptstyle{#1}}%
\!\int}
\def\XXint#1#2#3{{\setbox0=\hbox{$#1{#2#3}{\int}$ }
\vcenter{\hbox{$#2#3$ }}\kern-.6\wd0}}
\def\dashint{\Xint-}
\newcommand{\RR}{{\mathbb R}}
\newcommand{\NN}{{\mathbb N}}
\renewcommand{\O}{\Omega}
\newcommand{\eps}{\varepsilon}
\newcommand{\cF}{\mathcal{F}}
\newcommand{\ch}{\mathbf{1}}
\newcommand{\mmope}{{\mathcal T}}
\newcommand{\graph}{{\rm Gr}}
\newcommand{\un}{u_{n}}
\newcommand{\vn}{v_{n}}
\newcommand{\an}{a_{n}}
\newcommand{\fn}{f_{n}}
\newcommand{\zetan}{\zeta_{n}}
\newcommand{\betan}{\beta_{n}}
\newcommand{\nun}{\nu_{n}}
\newcommand{\Bn}{B_{n}}
\newcommand{\Tn}{T_{n}}
\newcommand{\Hn}{H_{n}}
\newcommand{\mun}{\mu_{n}}
\newcommand{\uini}{u^{\mathrm{ini}}} 
\newcommand{\uinin}{\uini_{n}}
\newcommand{\chin}{\chi_{n}}
\newcommand{\psin}{\psi_{n}}
\newcommand{\ol}[1]{\overline{#1}}
\newcommand{\chitilde}{\widetilde{\chi}}
\newcommand{\psitilde}{\widetilde{\psi}}
\newcommand{\mutilde}{\widetilde{\mu}}
\newcommand{\betatilde}{\widetilde{\beta}}
\newcommand{\zetatilde}{\widetilde{\zeta}}
\newcommand{\nutilde}{\widetilde{\nu}}
\newcommand{\atilde}{\widetilde{a}}
\newcommand{\weakto}{\rightharpoonup}
\newcommand{\weak}{\mbox{\rm-w}}
\newcommand{\trunc}{{\mathbb{T}}}
\def\term{{\mathcal I}}
\newcommand{\dive}{\operatorname{div}} % for writing divergence in math mode
\newcommand{\ud}{\, \mathrm{d}} % for getting the roman script d in integrals
\newcommand{\norm}[1]{\left\lVert#1\right\rVert} % for norms
\newcommand{\Id}{\operatorname{Id}} % for identity map
\newcommand{\diam}[1]{\operatorname{diam}(#1)} % for diameter of sets
\newcommand{\Leb}[3][ ]{L^{#2}(0, T; L^{#3}(\O)^{#1})} % for vector-valued Lebesgue spaces
\newcommand{\LSob}[3]{L^{#1}(0,T; W^{#2,#3}(\O))} % for vector-valued Lebesgue/Sobolev spaces
\newcommand{\LSobo}[3]{L^{#1}(0,T; W^{#2,#3}_{0}(\O))} % for vector-valued Lebesgue/Sobolev spaces,
\title[Convergence in $C(\lbrack0,T\rbrack;L^2(\O))$ for degenerate parabolic equations]{
Convergence in $C(\lbrack0,T\rbrack;L^2(\O))$ of weak solutions to perturbed doubly degenerate parabolic equations}
\author[J. Droniou]{J\'er\^ome Droniou$^1$}
	\address{$^{1,3}$School of Mathematical Sciences\\
	Monash University\\
	Clayton, Victoria 3800 Australia}
	\email{jerome.droniou@monash.edu}
\author[R. Eymard]{Robert Eymard$^2$}
	\address{$^2$Universit\'e Paris-Est\\
		Laboratoire d'Analyse et de Math\'ematiques Appliqu\'ees\\
		UMR 8050\\
		5 boulevard Descartes\\
		Champs-sur-Marne 77454 Marne-la-Vall\'ee Cedex 2\\
		France}
	\email{robert.eymard@u-pem.fr}	
\author[K.S. Talbot]{Kyle S. Talbot$^3$}
	\email{kyle.talbot@monash.edu}
	\thanks{$^3$Corresponding author}
\date{\today}
\keywords{uniform temporal convergence, Minty--Browder monotonicity technique, degenerate parabolic equation,
Leray-Lions operator, maximal monotone operator, Richards equation,
Stefan problem}
\subjclass[2000]{35D30, 35K20, 35K55, 35K65, 35K92}
\begin{document}

\begin{abstract}
We study the behaviour of solutions to a class of nonlinear degenerate parabolic
problems when the data are perturbed. The class includes the Richards equation, 
Stefan problem and the parabolic $p$-Laplace equation. We show that, up to a subsequence, weak solutions
of the perturbed problem converge uniformly-in-time to weak solutions of the original problem
as the perturbed data approach the original data. We do not assume uniqueness or additional 
regularity of the solution. However, when uniqueness is known, our result demonstrates that 
the weak solution is uniformly temporally stable to perturbations of the data.
Beginning with a proof of temporally-uniform, spatially-weak convergence, we 
strengthen the latter by relating the unknown to an underlying convex structure
that emerges naturally from energy estimates on the solution.  The double degeneracy --- shown 
to be equivalent to a maximal monotone operator framework --- is handled with 
techniques inspired by a classical monotonicity argument and a
simple variant of the compensated compactness phenomenon. 
\end{abstract}			

\maketitle

% --- INTRODUCTION ---
\section{Introduction} \label{sec:intro}
Consider the class of doubly nonlinear parabolic problems
\begin{equation}\left\{
\begin{aligned} 
&\partial_t \beta(u)-\dive \left[ a(x,\nu(u),\nabla\zeta(u)) \right]= f &\quad &\mbox{in } \Omega\times(0,T),\\
&\beta(u)(x,0) = \beta(\uini)(x) &\quad &\mbox{in } \Omega,\\
&\zeta(u)=0 &\quad &\mbox{on } \partial\Omega\times(0,T)
\end{aligned}\right. \label{eq:model} \tag{P}
\end{equation}
on a bounded open subset $\O$ of $\RR^d$.
The functions $\beta$ and $\zeta$ are nondecreasing and 
the function $\nu$ satisfies $\nu'=\beta'\zeta'$. The operator $a$ is of Leray-Lions type
and $\uini\in L^2(\O)$.
In applications one may have only approximate knowledge of the data 
$(\beta,\zeta,\nu,a,f,\uini)$, and one is interested in the value of the solution
at a particular instant in time.
The main result of this article concerns the continuity of $\nu(u)$ with respect to 
perturbations of the data. For $n\in\NN$, consider perturbed problems
with corresponding solutions $\un$:
\begin{equation}\left\{
\begin{aligned} 
&\partial_t \betan(\un)-\dive \left[ \an(x,\nun(\un),\nabla\zetan(\un)) \right]= \fn &\quad &\mbox{in } \Omega\times(0,T),\\
&\betan(\un)(x,0) = \betan(\uinin)(x) &\quad &\mbox{in } \Omega,\\
&\zetan(\un)=0 &\quad &\mbox{on } \partial\Omega\times(0,T).
\end{aligned}\right. \label{eq:perturbed} \tag{$\mathrm{P}_{\!n}$}
\end{equation}
If the data $(\betan,\zetan,\nun,\an,\fn,\uinin)$ converge to 
$(\beta,\zeta,\nu,a,f,\uini)$ in suitable manners, we show that, up to a subsequence, 
$\nun(\un)$ converges to $\nu(u)$ in $C([0,T]; L^2(\O))$.

Instances of \eqref{eq:model} arise in various contexts. We focus our attention
upon three models in particular: the Richards equation, the Stefan problem, and the
parabolic $p$-Laplace equation. By taking $\zeta$ to be the identity, $\nu=\beta$ 
and $a(x,\nu(u),\nabla\zeta(u))=K(x,\beta(u))\nabla u$, we recover the first of these,
which describes the flow of water in an unsaturated porous medium \cite{mr10,rich31}.
The quantity of interest is the pressure-dependent saturation $\beta(u)$, with $K(x,\beta(u))$
the mobility. A model of the Stefan problem \cite{bdmp84} of heat diffusion in a medium undergoing
phase transition is realised by taking $\beta$ to be the identity, $\nu=\zeta$ and
$a(x,\nu(u),\nabla\zeta(u))=K(x,\zeta(u))\nabla\zeta(u)$. Here we are interested in
the enthalpy-dependent temperature $\zeta(u)$, with $K(x,\zeta(u))$ representing
the thermal conductivity. To recover the parabolic $p$-Laplace
equation, take each of $\beta$, $\zeta$ and $\nu$ to be the identity and 
$a(x,\nu(u),\nabla\zeta(u))= |\nabla u|^{p-2}\nabla u$. The parabolic $p$-Laplace equation
features in, for example, the theory of non-Newtonian filtration; see E. DiBenedetto's
monograph \cite{db93} and the references therein.

In each of these examples the quantity of practical interest is $\nu(u)$. More specifically,
it is the value of $\nu(u)$ at a particular instant in time, say $t=T$. Pragmatically
speaking, it is therefore critical that $\nu(u)(T)$ be stable to perturbations
of the data. Our main result shows this to be the case in each of the above examples,
where uniqueness of the solution is known (at least if
$K$ depends only upon $x$; see Appendix \ref{app:unicite}). For general problems \eqref{eq:model},
uniqueness appears to be open, so we can only assert that a subsequence
of $\nun(\un)$ converges to $\nu(u)$, where $u$ is a solution of the limit problem.

The existence and uniqueness of weak solutions to \eqref{eq:model} with $\zeta=\Id$ is studied in the seminal article 
of H.W. Alt and S. Luckhaus \cite{al83}. F. Otto \cite{otto} subsequently improved 
their uniqueness result by removing a linearity assumption on the diffusion operator
$a(\nu(u),\nabla u)$,
and by assuming independence with respect to $x$, strict monotony with respect to $\nabla u$ and
H\"older continuity with respect to $\nu(u)$.
However, to our knowledge there are no existence and uniform temporal--strong spatial 
stability results for parabolic equations with as many nonlinearities
and degeneracies as \eqref{eq:model}.

Stability results do exist for simplified models. 
Using techniques from nonlinear semigroup theory, P. B\'enilan and M.G. Crandall 
\cite{bc81} show that solutions to the Cauchy problem for $\partial_{t}u - \Delta\varphi(u)=0$ 
on the whole space are stable in $C([0,T]; L^{1}(\RR^{d}))$ with respect to pointwise 
perturbations of $\varphi$ and $L^{1}(\RR^d)$-perturbations of the initial datum.
D. Blanchard and A. Porretta \cite{bp05} demonstrate
the $\Leb{\infty}{1}$-stability of renormalized solutions to the initial-boundary
value problem for 
$\partial_{t}b(u) -\dive(a(x,u,\nabla u)) + \dive(\Phi(u))=f$, under $L^1$-perturbations
of the source and initial datum. The authors assume that $b$ is a maximal
monotone graph on $\RR$, $b^{-1}\in C(\RR)$ and $a$ is a Leray-Lions operator. 
We refer the reader to Section \ref{sec:maxmonop} for further comparisons of our
work to this reference.

Stability for other notions of solution to degenerate parabolic
problems has also been considered. In the framework of entropy solutions, 
B. Andreianov et al. \cite{abko09} demonstrate the stability in $L^{1}(\O\times(0,T))$ 
of solutions to \eqref{eq:model} with additional convection and reaction terms, but
with specific assumptions on the monotonicity of $\zeta$
and $a$. I.C. Kim and
N. Po\v z\'ar \cite{kimpoz13} show that viscosity solutions to the Richards equation
are stable. One can also consider stability of solutions to the parabolic $p$-Laplace equation
with respect to perturbations of $p$. To this end, we refer the reader to the work 
of J. Kinnunen and M. Parviainen \cite{kp10} and subsequently T. Lukkari and Parviainen \cite{lp15}.

The convergence of $\nun(\un)$ to $\nu(u)$ in $C([0,T];L^2(\O))$ 
cannot be deduced by mere interpolation
from the uniform-in-time $L^1(\O)$ stability results in the previous references,
since the best uniform-in-time bound that we can obtain for $\nu(u)$ is in $L^2(\O)$. 
From the viewpoint of uniform-in-time estimates, establishing a convergence result 
in this ``limit''  space $L^2(\O)$ therefore requires new ideas.
The first step is the uniform-$[0,T]$, weak-$L^{2}(\O)$ convergence
of $\betan(\un)$ to $\beta(u)$. A key ingredient of the proof of this fact, and indeed much of our paper,
is the function $B$ (and its perturbed analogue $\Bn$) defined below in \eqref{def:B}. The importance of
$B$ was previously observed in \cite{al83} when $\zeta=\Id$. It enables
energy estimates on the solution via an integration-by-parts formula for the
action of $\partial_{t}\beta(u)$ on $\zeta(u)$. These estimates are sufficient for us
to deduce the aforementioned convergence of $\betan(\un)$ thanks to Proposition 
\ref{prop:unifweakcomp}, a uniform-in-time, weak-in-space analogue of the 
Aubin--Simon compactness theorem. The spatial compactness is weak here since
\eqref{eq:model} does not provide any information on the gradient of $\beta(u)$.
The convexity of $B$ yields lower semi-continuity of certain integral functionals,
that when combined with the energy identity satisfied
by the limit solution, enables us to prove the uniform convergence of $\int_\O \Bn(\betan(\un)(x,\cdot))\ud x$ on $[0,T]$.
A uniform convexity property of $B$ connects the convergence of these integrals to that
of $\nun(\un)$ in $L^2(\Omega)$, thus enhancing the convergence of $\nun(\un)$ to 
prove the main result, Theorem \ref{th:main}.

We anticipate that these ideas for obtaining uniform-temporal, strong-$L^{2}$ spatial
dependence of solutions upon the data may generalise to systems of equations as in 
\cite{al83}, and to convection--diffusion--reaction equations of the form studied
in \cite{abko09}, but in the variational setting.

We obtain the existence of solutions to \eqref{eq:model} as a straightforward corollary
to Theorem \ref{th:main}. When 
$a(x,\nu(u),\nabla\zeta(u)) = \Lambda(x)\nabla\zeta(u)$, we give a short uniqueness
proof in Appendix \ref{app:unicite}. We do not, however, address uniqueness or regularity for
general $a$. With the nonlinearities in \eqref{eq:model} and the irregularities in the data 
seen in the applications described above, one cannot expect to obtain such properties
in these instances. Indeed, examples of non-uniqueness of weak solutions
exist, see \cite[Remark 3.4]{degh13} for stationary Leray--Lions equations (corresponding
to $\beta=0$ and $\zeta=\Id$).

Since $\beta$ and $\zeta$ may share common plateaux, one of the challenges in
studying compactness properties of solutions to \eqref{eq:model} is identifying
weak limits. Our method handles this difficulty principally using a monotonicity argument. 
However, the double degeneracy necessitates the use of a compensated compactness lemma 
(see Remark \ref{rem:compcomp}), which in our setting is actually
a direct consequence of the Aubin--Simon theorem.

These tools enable us to generalise some aspects of \cite{bp05}, 
at least when the regularity index $p$ is not too small; see the concluding remarks 
to Section \ref{sec:maxmonop} for additional discussion on this point.
The first two authors of the current article use similar techniques \cite{dey14} for the convergence
analysis of numerical approximations of \eqref{eq:model}. Discrete compensated compactness
was recently employed by B. Andreianov, C. Canc\`es and A. Moussa \cite{acm15} to identify 
the limits of numerical schemes in the framework of maximal monotone 
operators. 

The article is organised as follows. In Section \ref{sec:hypres} we list the hypotheses
on the model \eqref{eq:model} and state the main result, Theorem \ref{th:main}.  
In Section \ref{sec:maxmonop} we recast the problem in the framework of maximal monotone operators
and give the analogue of Theorem \ref{th:main} in this setting.
In Section \ref{ssec:B} we note some technical properties of the function $B$.
To focus attention on the convergence problem, some of these results are only stated. For proofs,
the reader should consult \cite{dey14}. Section \ref{ssec:est} establishes our estimates.
Section \ref{ssec:convmono} presents two lemmas that play
an important role in the proof of Theorem \ref{th:main}, and which may be of independent
interest. Our temporally-uniform, spatially-weak analogue of the Aubin--Simon compactness
theorem occupies Section \ref{ssec:unicomp}. Section \ref{sec:proof} is the proof of 
the convergence results, including the $C([0,T];L^2(\O))$ convergence.
Appendix \ref{app:convlemmas} lists several minor lemmas that we employ throughout the article. 
Aubin--Simon compactness appears again in Appendix \ref{app:cc}, where we use it to prove a
compensated compactness lemma adapted for our current setting. Appendix \ref{app:unicite}
is a self-contained uniqueness proof when the Leray-Lions operator $a$ is linear.

% --- HYPOTHESES, MAIN RESULTS ---
\section{Hypotheses and main result} \label{sec:hypres}

We assume that $T>0$, $\O$ is a bounded open subset of $\RR^d$ 
($d\in\NN$), and
\begin{subequations}\label{assumptions}
\bhyp{hyp:beta}
\mbox{$\beta:\RR\to\RR$ is nondecreasing, Lipschitz continuous with Lipschitz
constant}\\
\mbox{$L_\beta > 0$, and satisfies $\beta(0)=0$.}
\ehyp
\bhyp{hyp:zeta}
\mbox{$\zeta:\RR\to\RR$ is nondecreasing, Lipschitz continuous
with Lipschitz constant}\\
\mbox{$L_\zeta >0$, and satisfies $\zeta(0)=0$. Furthermore,
there are positive constants}\\ 
\mbox{$M_{1}, M_{2}$ such that for 
every $s\in\RR$, $|\zeta(s)|\geq M_{1}|s| - M_{2}$.}
\ehyp
\bhyp{hyp:nu}
\mbox{For all $s\in\RR$,}\quad
\displaystyle{\nu(s) = \int_{0}^{s}\zeta'(q)\beta'(q)\ud q.}
\ehyp
Fix $p\in(1,\infty)$ and denote by $p'=\frac{p}{p-1}$ its H\"older conjugate. 
We assume that $a:\O\times\RR\times\RR^d\to\RR^d$ is Carath\'eodory,
and that there are constants $\underline{a}, \mu>0$ and a function 
$\overline{a}\in L^{p'}(\O)$ such that for almost every $x\in\O$, every 
$s\in\RR$ and for all $\xi,\chi\in\RR^d$,
\bhyp{hyp:acoerc}
a(x,s,\xi)\cdot\xi\geq \underline{a}|\xi|^{p},
\ehyp
\bhyp{hyp:agrowth}
|a(x,s,\xi)|\leq \overline{a}(x) + \mu|\xi|^{p-1},
\ehyp
\bhyp{hyp:amono}
(a(x,s,\xi) - a(x,s,\chi))\cdot(\xi - \chi) \geq 0.
\ehyp
The source term and initial trace satisfy
\bhyp{hyp:uinif}
f\in\LSob{p'}{-1}{p'}, \quad \uini\in L^{2}(\O).
\ehyp
Due to the double degeneracy (from $\beta$ and $\zeta$), identifying weak
limits obtained by compactness results is challenging and requires
monotonicity and compensated compactness techniques. 
To prove that weak limits of sequences of solutions to \eqref{eq:model} are also
solutions to \eqref{eq:model}, we consider three separate cases for $p$:
\end{subequations}
\begin{equation}\left\{
	\begin{aligned}
	&\mbox{\rm (I)}&&\mbox{$p\geq 2$,}\\
	&\mbox{or}\\
	&\mbox{\rm (II)}&&\mbox{$\frac{2d}{d+2}<p<2$ and there are positive constants $M_{3}$,
	$M_{4}$ such that for all $s\in\RR$,}\\ 
	&&&|\beta(s)|\geq M_{3}|s| - M_{4},\\ 	
	&\mbox{or}\\
	&\mbox{\rm (III)}&&\mbox{$1< p\leq \frac{2d}{d+2}$, there are positive constants 
	$M_{3}$, $M_{4}$ such that for all $s\in\RR$,}\\ 
	&&&\mbox{$|\beta(s)|\geq M_{3}|s| - M_{4}$, and $\beta$ is (strictly) increasing.}
	\end{aligned}\right.\label{hyp:pbeta}
\end{equation}

\begin{remark} 
\begin{enumerate}[(i)]
\item The assumption $\beta(0)=\zeta(0)=0$ is not restrictive, since replacing
$\beta$ and $\zeta$ with $\beta-\beta(0)$ and $\zeta-\zeta(0)$ (respectively) does not change
the problem.
\item Hypotheses \eqref{hyp:acoerc} and \eqref{hyp:agrowth} can be relaxed to
\begin{equation*}
a(x,s,\xi)\cdot\xi\geq \underline{a}|\xi|^{p} -\Theta(x)\qquad \mbox{with $\Theta\in L^1(\O)$},
\end{equation*}
and
\begin{equation*}
|a(x,s,\xi)|\leq \overline{a}(x) + \mu |s|^q+ \mu|\xi|^{p-1}\qquad\mbox{ with $q<\max(2/p',p-1)$}.
\end{equation*}
\item The condition $p>\frac{2d}{d+2}$ in \eqref{hyp:pbeta} is equivalent to $p^*>2$,
where $p^*$ is the Sobolev exponent of $p$; i.e., $p^*=\frac{dp}{d-p}$ if $p<d$ and
$p^*=+\infty$ if $p\ge d$.
\item Since the basic energy estimates on \eqref{eq:model} provide strong
compactness for $\nu(u)$ (see \eqref{eq:nunstrong}), we can just as easily
handle source terms of the form $f(x,t,\nu(u))$ as in \cite{al83}.
\end{enumerate}
\end{remark}

Denote by $R_{\beta}$ the range of $\beta$ and for $s\in R_{\beta}$ define the right inverse
$\beta^{r}:R_{\beta}\to\RR$ of $\beta$ by
\begin{equation}
\beta^{r}(s) = 
	\begin{cases}
	\inf\{t\in\RR\,|\,\beta(t)=s\} & \text{if }s> 0,\\
	0 & \text{if }s=0,\\
	\sup\{t\in\RR\,|\,\beta(t)=s\} & \text{if }s < 0.
	\end{cases} \label{def:betar}
\end{equation}
That is, $\beta^r(s)$ is the closest $t$ to $0$ such that $\beta(t) = s$.
Since $\beta(0)=0$, note that $\beta^{r}$ is nondecreasing, nonnegative
on $R_{\beta}\cap\RR^+$ and nonpositive on
$R_{\beta}\cap \RR^-$. We can therefore extend $\beta^r$ as a function $\overline{R_\beta}
\to [-\infty,\infty]$. We then define $B:\overline{R_{\beta}}\to [0,\infty]$ by
\begin{equation}
B(z)=\int_{0}^{z} \zeta(\beta^{r}(s))\ud s. \label{def:B}
\end{equation}
The signs of $\zeta$ and $\beta^r$ ensure that $B$ is nonnegative 
on $\overline{R_\beta}$, nondecreasing on $\overline{R_{\beta}}\cap \RR^+$ and nonincreasing on
$\overline{R_{\beta}}\cap \RR^-$.
Moreover, since $\zeta$ and $\beta^r$ are non-decreasing, $B$ is convex on 
$\overline{R_\beta}$. This calls for extending $B$ as a function $\RR\to[0,+\infty]$ 
by setting $B=+\infty$ outside $\overline{R_\beta}$. This function is still nondecreasing 
on $\RR^+$ and nonincreasing on $\RR^-$.

% --- NOTION OF SOLUTION ---
Our notion of solution to \eqref{eq:model} is as follows.
\begin{definition}\label{def:solution}
Under Hypotheses \eqref{assumptions}, a \emph{solution} to \eqref{eq:model} is
a function $u$ satisfying
\begin{equation}\left\{
	\begin{gathered}
	u:\O\times(0,T)\to\RR \mbox{ is measurable}, \quad \zeta(u) \in \LSobo{p}{1}{p},\\
	B(\beta(u))\in \Leb{\infty}{1}, \quad \beta(u)\in C([0,T];L^2(\O)\weak),\\
	\partial_t\beta(u)\in \LSob{p'}{-1}{p'}, \quad \beta(u)(\cdot,0) = \beta(\uini)\mbox{ in $L^2(\O)$},\\
	\int_{0}^{T}\langle \partial_t\beta(u)(\cdot,t), v(\cdot,t)\rangle_{W^{-1,p'},W^{1,p}_0}\ud t\\
	+ \int_{0}^{T}\int_{\O}a(x,\nu(u(x,t)),\nabla\zeta(u)(x,t))\cdot\nabla v(x,t) \ud x\ud t \\
	= \int_{0}^{T}\langle f(\cdot,t),v(\cdot,t)\rangle_{W^{-1,p'},W^{1,p}_{0}} \ud t
	\quad \forall v\in L^p(0,T;W^{1,p}_0(\O)).
	\end{gathered}\right. \label{eq:solution}
\end{equation}
\end{definition}
Here $C([0,T];L^2(\O)\weak)$ denotes the space of continuous functions from $[0,T]$
into $L^{2}(\O)$, where the latter is equipped with the weak topology. This notion
of continuity for $\beta(u)$ can be understood as a natural consequence of the
integrability of $\beta(u)$ and the PDE itself. Fix $\varphi\in C^{\infty}_{c}(\O)$
and consider the map 
$\mathcal L_{\varphi}: [0,T]\to\RR$, $t\mapsto\langle \beta(u)(t), \varphi\rangle_{L^{2}(\O)}$.
One can show, using the PDE in the sense of distributions and the fact that
$\beta(u)\in\Leb{\infty}{2}$ (see Estimate \eqref{est:energy} below), 
that $\mathcal L_{\varphi}\in W^{1,1}(0,T)\subset C([0,T])$. From the density of $C^{\infty}_{c}(\O)$
in $L^{2}(\O)$ one deduces that for every $\varphi\in L^{2}(\O)$, $\mathcal L_{\varphi}\in C([0,T])$. 
That is, $\beta(u): [0,T]\to L^{2}(\O)\weak$ is continuous.

% --- MAIN THEOREM ---
The main result of this paper is the following convergence theorem.
\begin{theorem}\label{th:main}
Let $(\betan,\zetan,\nun,\an,\fn,\uinin)_{n\in\NN}$ be a sequence converging to 
$(\beta,\zeta,\nu,a,f,\uini)$ in the following sense:
\begin{equation}\left\{
	\begin{aligned}
	&\mbox{$\betan$, $\zetan$ and $\nun$ converge locally uniformly on $\RR$ to 
	$\beta$, $\zeta$ and $\nu$, respectively;}\\
	&\mbox{for almost every $x\in\O$, $\an(x,\cdot,\cdot)\to a(x,\cdot,\cdot)$ locally
	uniformly on $\RR\times\RR^d$;}\\
	&\mbox{$\fn\to f$ in $\LSob{p'}{-1}{p'}$ and $\uinin\to\uini$ in $L^{2}(\O)$.}
	\end{aligned}\right. \label{convergences}
\end{equation}
Assume that $(\beta,\zeta,\nu,a,f,\uini)$ and $(\betan,\zetan,\nun,\an,\fn,\uinin)$
(for every $n\in\NN$) satisfy \eqref{assumptions} and \eqref{hyp:pbeta}, and that the constants $L_{\beta}$,
$L_{\zeta}$, $M_{1}$, $M_{2}$, $M_3$, $M_4$, $\underline{a}$, $\mu$ and the function $\overline{a}$  
are independent of $n$. Let $\un$ be a solution to \eqref{eq:perturbed}. 
Then there is a solution $u$ of \eqref{eq:model} such that, up to a subsequence,
\begin{equation}\left\{
	\begin{aligned}
	&\betan(\un) \to \beta(u) \quad \mbox{in $C([0,T]; L^{2}(\O)\weak)$,}\\
	&\nun(\un) \to \nu(u) \quad \mbox{in $C([0,T]; L^{2}(\O))$, and}\\
	&\zetan(\un)\weakto \zeta(u) \quad \mbox{weakly in }\LSobo{p}{1}{p}.
	\end{aligned}\right.\label{eq:mainconv}
\end{equation}
If in addition we assume that $a$ is strictly
monotone, that is, the inequality in \eqref{hyp:amono} with $\chi\not=\xi$ is strict, then
\begin{equation}
\zetan(\un) \to \zeta(u) \quad \mbox{strongly in $\LSobo{p}{1}{p}$.}\label{eq:strgrad}
\end{equation}
\end{theorem}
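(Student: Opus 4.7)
The plan is to begin by testing \eqref{eq:perturbed} with $\zetan(\un)$. The integration-by-parts identity
\begin{equation*}
\int_0^t \langle \partial_s\betan(\un),\zetan(\un)\rangle_{W^{-1,p'},W^{1,p}_0}\ud s=\int_\O \Bn(\betan(\un))(\cdot,t)\ud x-\int_\O \Bn(\betan(\uinin))\ud x,
\end{equation*}
combined with the coercivity \eqref{hyp:acoerc}, yields uniform bounds on $\zetan(\un)$ in $\LSobo{p}{1}{p}$, on $\Bn(\betan(\un))$ in $\Leb{\infty}{1}$, and hence on $\betan(\un)$ in $\Leb{\infty}{2}$ via the growth inferred from \eqref{hyp:zeta}. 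The PDE produces a uniform $\LSob{p'}{-1}{p'}$ bound for $\partial_t\betan(\un)$. Applying Proposition \ref{prop:unifweakcomp} extracts a subsequence along which $\betan(\un)\to\chibar$ in $C([0,T];L^2(\O)\weak)$, while standard weak compactness gives $\zetan(\un)\weakto\zetabar$ in $\LSobo{p}{1}{p}$. Since $\nabla\nun(\un)=\betan'(\un)\nabla\zetan(\un)$ is uniformly bounded in $\Leb{p}{p}^d$, a further Aubin--Simon-type argument produces a subsequence with $\nun(\un)\to\nubar$ strongly in $\Leb{p}{p}$, hence almost everywhere.

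\textbf{Identification and passage to the PDE.} The double degeneracy obstructs any immediate identification of $\chibar$ and $\zetabar$ with images of a common $u$. The main tool here is the compensated-compactness lemma of Appendix \ref{app:cc}, which establishes
\begin{equation*}
\int_0^T\!\int_\O \betan(\un)\,\zetan(\un)\ud x\ud t\longrightarrow\int_0^T\!\int_\O \chibar\,\zetabar\ud x\ud t
\end{equation*}
by pairing the temporal compactness of $\betan(\un)$ (weak in space) against the spatial compactness of $\zetan(\un)$. Combined with a Minty-type monotonicity argument applied to the monotone pair $(\betan,\zetan)$, and using the three regimes of \eqref{hyp:pbeta} to supply the necessary Sobolev-embedding or strict-monotonicity control that pins down the correspondence, this produces a measurable $u$ such that $\chibar=\beta(u)$, $\zetabar=\zeta(u)$ and $\nubar=\nu(u)$ almost everywhere. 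A standard Leray--Lions/Minty--Browder argument using \eqref{hyp:amono} identifies the weak limit of $\an(\cdot,\nun(\un),\nabla\zetan(\un))$ as $a(\cdot,\nu(u),\nabla\zeta(u))$, so that $u$ solves \eqref{eq:model}. If $a$ is strictly monotone, the usual upgrade yields a.e.\ convergence of $\nabla\zetan(\un)$ and then strong $\LSobo{p}{1}{p}$ convergence by Vitali, establishing \eqref{eq:strgrad}.

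\textbf{Uniform strong $L^2$ convergence of $\nun(\un)$ (the main obstacle).} Evaluating the energy identity at an arbitrary $t\in[0,T]$ and passing to the limit on its right-hand side using the data convergences, the weak limits from the previous steps, and the identification of the limit of $\an(\cdot,\nun(\un),\nabla\zetan(\un))$, I would obtain
\begin{equation*}
\int_\O \Bn(\betan(\un))(\cdot,t)\ud x\ \longrightarrow\ \int_\O B(\beta(u))(\cdot,t)\ud x\qquad\mbox{for every }t\in[0,T].
\end{equation*}
Convexity of $B$ together with the $C([0,T];L^2(\O)\weak)$ convergence of $\betan(\un)$ yields the reverse lower-semicontinuity inequality $\int_\O B(\beta(u))(\cdot,t)\ud x\le\liminf_n\int_\O \Bn(\betan(\un))(\cdot,t)\ud x$ at every $t$. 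A uniform-convexity property of $B$, relating the $L^2(\O)$-distance between $\nun(\un)(\cdot,t)$ and $\nu(u)(\cdot,t)$ to the convexity defect $\tfrac12\!\int_\O[\Bn(\betan(\un))+B(\beta(u))]\ud x-\int_\O B\!\left(\tfrac{\betan(\un)+\beta(u)}{2}\right)\ud x$ evaluated at time $t$, then forces pointwise-in-$t$ strong $L^2(\O)$ convergence of $\nun(\un)$ to $\nu(u)$. Equicontinuity of $t\mapsto \nun(\un)(\cdot,t)$ in $L^2(\O)$, inherited from the uniform $\partial_t\betan(\un)$ bound together with the Lipschitz control $|\nun(a)-\nun(b)|\le L_\beta|\zetan(a)-\zetan(b)|$ valid for all real $a,b$, finally upgrades pointwise to $C([0,T];L^2(\O))$ convergence. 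The principal obstacles are formulating and justifying the uniform-convexity inequality for $B$, and controlling the perturbation of $\Bn$ to $B$ appearing in the convexity defect above.
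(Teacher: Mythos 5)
Your skeleton is recognisably the paper's: energy estimates plus Proposition \ref{prop:unifweakcomp} for the weak-uniform compactness of $\beta_n(u_n)$, identification of the nonlinear limits by the compensated compactness lemma together with the Minty-type Lemma \ref{lem:mintylike}, a Minty--Browder argument for the Leray--Lions term, and then convergence of the integrals of $B_n$ (energy identity plus the lower semicontinuity of Lemma \ref{lem:fatouconv2}) combined with uniform convexity of $B$. But the step where the theorem actually lives --- the upgrade to $C([0,T];L^2(\O))$ --- has two genuine gaps. First, the convexity defect you write down mixes the perturbed and unperturbed nonlinearities: an inequality of the form $[\nu_n(a)-\nu(b)]^2\lesssim B_n(\beta_n(a))+B(\beta(b))-2B\bigl(\tfrac{\beta_n(a)+\beta(b)}{2}\bigr)$ is nowhere established and does not follow from \eqref{eq:Bunifconvex}, whose proof uses the algebraic relations between $\nu,\zeta,\beta,B$ for a single set of data; note also that $\tfrac{\beta_n(a)+\beta(b)}{2}$ need not lie in $\overline{R_\beta}$, where $B$ is finite, so the right-hand side can be $+\infty$ and the bound vacuous. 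The paper keeps the index $n$ in both slots, comparing $\nu_n(u_n)(\cdot,T_n)$ with $\nu_n(u(\cdot,s))$, averages $s$ over a small interval $I_\eps$ around the target time (because $\beta_n(u)$ and $\nu_n(u)$ have no well-defined trace at a single instant), and treats the residual term $\nu_n(u)\to\nu(u)$ separately ($\term_4$ in \eqref{eq:Iaverage}); the passage $\eps\to0$ then uses the continuity of $t\mapsto\int_\O B(\ol{\beta(u)}(\cdot,t))$ and of $\ol{\nu(u)}$.

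Second, and more seriously, your upgrade from pointwise-in-$t$ to uniform convergence rests on equicontinuity of $t\mapsto\nu_n(u_n)(\cdot,t)$ in $L^2(\O)$, claimed to follow from the $\partial_t\beta_n(u_n)$ bound and $|\nu_n(a)-\nu_n(b)|\le L_\beta|\zeta_n(a)-\zeta_n(b)|$. This is not available: estimating $\norm{\nu_n(u_n)(\cdot,t)-\nu_n(u_n)(\cdot,s)}_{L^2(\O)}^2$ through \eqref{eq:nubetatrans2} requires pairing $\int_s^t\partial_t\beta_n(u_n)$ against $\zeta_n(u_n)(\cdot,t)-\zeta_n(u_n)(\cdot,s)$ in the $W^{-1,p'}$--$W^{1,p}_0$ duality, and $\zeta_n(u_n)$ is only bounded in $L^p(0,T;W^{1,p}_0(\O))$, so there is no control at fixed times that is uniform in $n$; only the integrated-in-time translate estimate of Lemma \ref{lem:nutrans} survives. (This is also why your Step-1 claim that strong compactness of $\nu_n(u_n)$ comes from ``a further Aubin--Simon argument'' needs that translate lemma rather than a bound on $\partial_t\nu_n(u_n)$, which does not exist, and why case (III) of \eqref{hyp:pbeta} forces the truncations $\zeta_n^k$ in the identification step.) If such equicontinuity were available, the paper's elaborate Step 4 would be unnecessary. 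The paper circumvents it with Lemma \ref{lem:equiv-unifconv}: uniform convergence is equivalent to $\ol{\nu_n(u_n)}(\cdot,T_n)\to\ol{\nu(u)}(\cdot,T_\infty)$ for every sequence $T_n\to T_\infty$, so the whole convexity argument is run with $n$-dependent times, the term $\term_1$ being controlled by the energy identity at $T_n$ together with \eqref{eq:liminfan}. Your intermediate observation that $\int_\O B_n(\ol{\beta_n(u_n)}(\cdot,t))\to\int_\O B(\ol{\beta(u)}(\cdot,t))$ is correct, but by itself it does not yield uniformity without this device.
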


% --- REMARK: RELATION TO STABILITY QUESTION ---
\begin{remark}
This theorem provides a stability result for any subclass of problem \eqref{eq:model}
for which uniqueness of the weak solution is known. This is indeed the case for simplified
versions of the Richards equation and Stefan problem; see Appendix \ref{app:unicite}.
In these settings, Theorem \ref{th:main} shows that the \emph{whole sequence} $\betan(\un)$ (respectively 
$\zetan(\un)$) converges uniformly-in-time to $\beta(u)$ (respectively $\zeta(u)$).
\end{remark}

% --- REMARK: DINI THEOREM ---
\begin{remark}
Since $\betan$, $\zetan$ and $\nun$ are non-decreasing and 
$\beta$, $\zeta$ and $\nu$ are continuous, Dini's theorem shows that
we only need to assume that $\betan$, $\zetan$ and $\nun$ converge
pointwise. The Arzel\`a--Ascoli theorem can be used to arrive at the
same conclusion, since $\betan$, $\zetan$ and $\nun$ are uniformly Lipschitz continuous.
\end{remark}
% --- REMARK: CONVERGENCE OF NU_N TO NU ---
\begin{remark} \label{rem:nu}
The local uniform convergence on $\RR$ of $\nun$ to $\nu$ holds if we assume that 
$\betan' \to \beta'$ almost everywhere on $\RR$, or 
$\zetan' \to \zeta'$ almost everywhere on $\RR$. Indeed, suppose that the latter pointwise
convergence holds. Since $(\betan')_{n\in\NN}$ is bounded by $L_\beta$, up to a
subsequence, $\betan' \weakto \chi$ weak-$\ast$ in $L^{\infty}(\RR)$ for
some bounded $\chi:\RR\to\RR$. Then as $n\to\infty$,
\begin{equation*}
\betan(s) = \int_{0}^{s}\betan'(q)\ud q \to \int_{0}^{s}\chi(q)\ud q.
\end{equation*}
But $\betan(s)\to\beta(s)$ for every $s\in\RR$, so it must be that $\chi=\beta'$
almost everywhere on $\RR$ and therefore that $\betan' \weakto \beta'$ weak-$\ast$ in 
$L^{\infty}(\RR)$. One can then pass to the limit as $n\to\infty$ in the definition of 
$\nun$, using dominated convergence on the sequence $(\zetan')_{n\in\NN}$, to obtain
the local uniform convergence towards $\nu$.
\end{remark}

\begin{remark}
Observe that in the case $1<p\le \frac{2d}{d+2}$ in \eqref{hyp:pbeta}, we 
do not need the strict monotonicity of each $\betan$; we only require that the limit
$\beta$ does not have any plateaux.
\end{remark}

As a by-product of this convergence result, we obtain existence for \eqref{eq:model}.

\begin{corollary}\label{cor:exist}
Under Hypotheses \eqref{assumptions} and \eqref{hyp:pbeta}, there exists a solution to \eqref{eq:model}.
\end{corollary}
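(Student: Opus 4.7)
The plan is to construct a sequence $(\betan,\zetan,\nun,\an,\fn,\uinin)$ meeting the hypotheses of Theorem \ref{th:main}, whose perturbed problems \eqref{eq:perturbed} admit solutions by classical (non-degenerate) theory, and whose data converge to $(\beta,\zeta,\nu,a,f,\uini)$ in the sense of \eqref{convergences}. The theorem will then supply a solution to \eqref{eq:model} as a subsequential limit of $\un$.

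For the regularization, I would set $\betan(s) := \beta(s) + s/n$ and $\zetan(s) := \zeta(s) + s/n$, define $\nun$ via \eqref{hyp:nu}, and keep $\an = a$, $\fn = f$, $\uinin = \uini$. Each $\betan$ and $\zetan$ is then strictly increasing and bi-Lipschitz, with derivatives bounded below by $1/n$. The hypotheses \eqref{assumptions} and \eqref{hyp:pbeta} hold uniformly in $n$: the Lipschitz constants are controlled by $L_\beta + 1$ and $L_\zeta + 1$; since $\zeta(s)$ and $s$ share the same sign, $|\zetan(s)| \ge |\zeta(s)| \ge M_1|s| - M_2$, and an analogous argument controls $|\betan|$ in cases (II) and (III) of \eqref{hyp:pbeta}. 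The convergences \eqref{convergences} are immediate: $\betan \to \beta$ and $\zetan \to \zeta$ uniformly on $\RR$, and $\nun \to \nu$ locally uniformly, e.g.\ by Remark \ref{rem:nu}.

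The main step is existence of a solution $\un$ to each perturbed problem. Because $\zetan$ is bi-Lipschitz, the change of variable $v_n := \zetan(\un)$ converts \eqref{eq:perturbed} into
\begin{equation*}
\partial_t \widetilde{b}_n(v_n) - \dive\bigl[a(x,\widetilde{\nu}_n(v_n), \nabla v_n)\bigr] = \fn,
\end{equation*}
with $\widetilde{b}_n := \betan \circ \zetan^{-1}$ nondecreasing and Lipschitz and $\widetilde{\nu}_n := \nun \circ \zetan^{-1}$ continuous. This is precisely the form (corresponding to $\zeta = \Id$) handled by the existence theory of Alt--Luckhaus \cite{al83}; a solution $v_n$ then lifts via $\un := \zetan^{-1}(v_n)$ to a solution of \eqref{eq:perturbed} in the sense of Definition \ref{def:solution}.

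With the sequence $(\un)$ at hand, Theorem \ref{th:main} produces, up to a subsequence, a limit $u$ with the convergences \eqref{eq:mainconv}, and $u$ is a solution to \eqref{eq:model}. The only non-routine ingredient is ensuring the non-degenerate perturbed problem has a solution; the change-of-variable trick reduces this to an existence result already available in the literature, so beyond that the proof is a verification that the regularized data meet the hypotheses of Theorem \ref{th:main}.
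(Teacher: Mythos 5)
Your proposal is correct and follows essentially the same route as the paper: regularize $\beta$ and $\zeta$ by adding a small multiple of the identity so that they become bi-Lipschitz, check that Hypotheses \eqref{assumptions}, \eqref{hyp:pbeta} and the convergences \eqref{convergences} hold uniformly, and obtain existence for the regularized problem via a change of variables that reduces it to a known existence theorem, after which Theorem \ref{th:main} yields a solution of \eqref{eq:model}. The only (harmless) difference is the reduction itself: the paper substitutes $v=\beta(u)$, which removes the nonlinearity under $\partial_t$ entirely and allows it to invoke Lions' classical nondegenerate Leray--Lions theory \cite{lio65}, whereas you substitute $v=\zeta(u)$ and appeal to Alt--Luckhaus \cite{al83} for the resulting problem with the bi-Lipschitz $\widetilde{b}_n$ under the time derivative --- both work, though your version leaves implicit the (routine, and in the paper explicit) verification that the solution so produced satisfies every item of Definition \ref{def:solution}, e.g.\ $B_n(\betan(\un))\in\Leb{\infty}{1}$ via the quadratic growth \eqref{eq:growthB}.
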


\begin{proof}
Theorem \ref{th:main} shows that we only need to establish the existence of a solution
for perturbed problems \eqref{eq:model}. Upon replacing $\beta$ and $\zeta$ by $\beta+\delta\Id$
and $\zeta+\delta\Id$ for some small $\delta>0$, we can therefore assume that
\begin{equation*}
\beta'\geq\delta \quad\mbox{ and }\quad\zeta'\geq\delta \quad\mbox{on $\RR$}.
\end{equation*}
In particular, these perturbed $\beta$ and $\zeta$ are bi-Lipschitz homeomorphisms, 
and we define
\begin{equation}\label{def:a0}
a_0(x,s,\xi)=a(x,\nu(\beta^{-1}(s)),(\zeta\circ\beta^{-1})'(s)\xi),
\end{equation}
where for some $\rho>0$, $(\zeta\circ\beta^{-1})'(s)\in [\rho,\rho^{-1}]$
for all $s\in\RR$. The function $a_0$ satisfies
\eqref{hyp:acoerc}--\eqref{hyp:amono}. J.-L. Lions showed \cite{lio65} that there
exists a solution to
\begin{equation}\label{eq:v:nondeg}
\left\{
\begin{aligned} 
&\partial_t v-\dive \left( a_0(x,v,\nabla v) \right)= f &\quad &\mbox{in } \Omega\times(0,T),\\
&v(x,0) = \beta(\uini)(x) &\quad &\mbox{in } \Omega,\\
&v=0 &\quad &\mbox{on } \partial\Omega\times(0,T)
\end{aligned}\right.
\end{equation}
in the sense that $v\in L^p(0,T;W^{1,p}_0(\O))\cap C([0,T];L^2(\O))$,
$\partial_t v\in L^{p'}(0,T;W^{-1,p'}(\O))$, $v(\cdot,0)=\beta(\uini)$,
and the equation is satisfied against any test function in
$L^p(0,T;W^{1,p}_0(\O))$.

We then set $v=\beta(u)$. Then $\zeta(u)=(\zeta\circ\beta^{-1})(v)$ with
$\zeta\circ\beta^{-1}$ Lipschitz continuous,
and thus $\zeta(u)\in L^p(0,T;W^{1,p}_0(\O))$. We have $\beta(u)=v\in
C([0,T];L^2(\O))$, $\beta(u)(\cdot,0)=v(\cdot,0)=\beta(\uini)$, and
$\partial_t \beta(u)=\partial_t v\in L^{p'}(0,T;W^{-1,p'}(\O))$.
The definition \eqref{def:a0} of $a_0$ shows that
\begin{equation*}
a_0(x,v,\nabla v)=a(x,\nu(\beta^{-1}(v)),(\zeta\circ\beta^{-1})'(v)\nabla v)=
a(x,\nu(u),\nabla \zeta(u))
\end{equation*}
and thus the integral equation in \eqref{eq:solution} follows from
writing the equation \eqref{eq:v:nondeg} against test functions in $L^p(0,T;W^{1,p}_0(\O))$.
Finally, since $B\circ \beta$ grows quadratically (see \eqref{eq:growthB} below)
and $u=\beta^{-1}(v)\in C([0,T];L^2(\O))$, we have $B(\beta(u))\in L^\infty(0,T;L^1(\O))$.
Thus $u$ is a solution to \eqref{eq:model}. \end{proof}

% --- MAXIMAL MONOTONE OPERATORS ---

\section{A maximal monotone operator viewpoint}\label{sec:maxmonop}

This section demonstrates that our setting covers problems defined by 
sublinear maximal monotone operators. We begin with a lemma.

\begin{lemma}[Maximal monotone operator]\label{lem:maxmongr}
Let $\mmope:\RR\to \mathcal P(\RR)$ be a multi-valued operator. Then the following
are equivalent:
\begin{enumerate}[(i)]
\item\label{equiv:it1} $\mmope$ is a maximal monotone operator with domain $\RR$,
$0\in \mmope(0)$ and $\mmope$ is sublinear in the sense that there exist 
$T_1, T_2\geq 0$ such that, for all $x\in \RR$ and all $y\in \mmope(x)$, $|y|\leq T_1|x|+T_2$;
\item\label{equiv:it2} There exist $\zeta$ and $\beta$ satisfying
\eqref{hyp:zeta} and \eqref{hyp:beta} such that the graph of $\mmope$
is given by $\graph(\mmope) = \{(\zeta(s),\beta(s)),s\in\RR\}$.
\end{enumerate}
\end{lemma}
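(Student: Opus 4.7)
The plan is to parametrize $\graph(\mmope)$ via the resolvent $J_1 := (I + \mmope)^{-1}$, setting $\zeta(s) := J_1(s)$ and $\beta(s) := s - J_1(s)$; the parameter $s$ then corresponds to $x + y$ along the graph. Modulo classical properties of scalar maximal monotone operators, both implications reduce to unpacking this parametrization.

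For $(ii)\Rightarrow(i)$: monotonicity is automatic from $\zeta$ and $\beta$ both being nondecreasing---for any two parameters $s_1, s_2$, the differences $\zeta(s_1) - \zeta(s_2)$ and $\beta(s_1) - \beta(s_2)$ have the same sign as $s_1 - s_2$, so their product is nonnegative. The lower bound $|\zeta(s)| \geq M_1|s| - M_2$ together with continuity forces $\zeta$ to be surjective onto $\RR$, so the domain of $\mmope$ is all of $\RR$, and combined with the Lipschitz constant of $\beta$ yields the sublinear bound $|y| \leq (L_\beta/M_1)|x| + L_\beta M_2/M_1$ for any $y \in \mmope(x)$. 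The same lower bound makes the continuous nondecreasing map $s \mapsto \zeta(s) + \beta(s)$ coercive in both directions, hence surjective onto $\RR$, so $I + \mmope$ is onto; Minty's criterion then gives maximality. The condition $0 \in \mmope(0)$ is immediate from $\zeta(0) = \beta(0) = 0$.

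For $(i)\Rightarrow(ii)$: the classical theory (see, e.g., H.~Brezis) provides that $I + \mmope$ is a bijection of $\graph(\mmope)$ onto $\RR$ and that $J_1$ is single-valued and firmly nonexpansive. Define $\zeta := J_1$ and $\beta := I - J_1$; both are $1$-Lipschitz (firm nonexpansivity of $J_1$ propagates to $I - J_1$) and vanish at $0$ since $0 \in \mmope(0)$ forces $J_1(0) = 0$. Nondecreasingness of $\zeta$ follows from monotonicity of $\mmope$ applied to $(\zeta(s_i), \beta(s_i))$: if $s_1 < s_2$ but $\zeta(s_1) > \zeta(s_2)$, then $\beta(s_1) \geq \beta(s_2)$, which forces $s_1 > s_2$, a contradiction. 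Nondecreasingness of $\beta$ is then $\beta(s_2) - \beta(s_1) = (s_2 - s_1) - (\zeta(s_2) - \zeta(s_1)) \geq 0$, using that $\zeta$ is $1$-Lipschitz. The graph identification is built into the construction: $\beta(s) \in \mmope(\zeta(s))$ by definition of $J_1$, and conversely $y \in \mmope(x)$ gives $x + y \in (I + \mmope)(x)$, whence $\zeta(x+y) = x$ and $\beta(x+y) = y$ by single-valuedness. The lower bound $|\zeta(s)| \geq M_1|s| - M_2$ comes from sublinearity of $\mmope$: $|s| \leq |\zeta(s)| + |\beta(s)| \leq (1+T_1)|\zeta(s)| + T_2$, so one takes $M_1 = 1/(1+T_1)$ and $M_2 = T_2/(1+T_1)$.

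The main obstacle---really the only non-cosmetic step---is the appeal to the classical structure theorem for scalar maximal monotone operators (single-valuedness and firm nonexpansivity of $J_1$, surjectivity of $I + \mmope$, and the equivalence of maximality with Minty's surjectivity criterion). Once these are cited, both directions are bookkeeping on the change of variables $s = x + y$ along the graph.
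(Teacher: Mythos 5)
Your proposal is correct and takes essentially the same route as the paper: the (i)$\Rightarrow$(ii) direction is identical (set $\zeta=(\Id+\mmope)^{-1}$, $\beta=\Id-\zeta$, parametrize the graph by $s=x+y$, and get $M_1,M_2$ from sublinearity), and the (ii)$\Rightarrow$(i) direction likewise rests on the surjectivity of $\beta+\zeta$. The only difference is in how maximality is concluded: you invoke Minty's surjectivity criterion as a cited classical theorem, while the paper verifies maximality directly in three lines (given $w$ with $\beta(w)+\zeta(w)=x+y$, any monotonically related pair $(x,y)$ satisfies $0\le(\zeta(w)-x)(\beta(w)-y)=-(\beta(w)-y)^2$, forcing $(x,y)=(\zeta(w),\beta(w))$), which keeps that step self-contained; both are valid.
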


\begin{proof} 
\emph{(\ref{equiv:it2})$\Rightarrow$(\ref{equiv:it1}).}
Clearly $0=(\zeta(0),\beta(0))\in \mmope(0)$.
The monotonicity of $\mmope$ follows from the fact that $\zeta$ and $\beta$ are nondecreasing.
We prove that $\mmope$ is maximal, that is if
$x,y$ satisfy $(\zeta(s)-x)(\beta(s) - y)\ge 0$ for all $s\in\RR$ then
$(x,y)\in \graph(\mmope)$. 
By \eqref{hyp:beta} and \eqref{hyp:zeta} the mapping $\beta+\zeta:\RR\to\RR$ is surjective,
so there exists $w\in\RR$ such that 
\begin{equation}\label{rem:xy}
\beta(w)+\zeta(w)=x+y.
\end{equation}
Then $\zeta(w)-x=y-\beta(w)$ and therefore
$0\le (\zeta(w)-x)(\beta(w) - y)=-(\beta(w)-y)^2$. This implies $\beta(w)=y$ and,
combined with \eqref{rem:xy}, $\zeta(w)=x$. Hence $(x,y)\in \graph(\mmope)$.
The sub-linearity of $T$ follows from 
$|\beta(w)| \le L_\beta |w| \le L_\beta ( |\zeta(w)|+M_2 )/M_1$.

\emph{(\ref{equiv:it1})$\Rightarrow$(\ref{equiv:it2}).} Recall that the resolvent $\mathcal R(\mmope)=
(\Id+\mmope)^{-1}$ of the maximal monotone operator $\mmope$ is a single-valued function $\RR\to\RR$
that is nondecreasing and Lipschitz continuous with Lipschitz constant $1$. Set $\zeta=\mathcal R(\mmope)$ and
$\beta=\Id-\zeta$. These functions are nondecreasing and Lipschitz continuous with constant $1$.
By definition of the resolvent,
\begin{equation*}
(x,y)\in\graph(\mmope)\Leftrightarrow (x,x+y)\in\graph(\Id+\mmope)
\Leftrightarrow (x+y,x)\in \graph(\zeta)\Leftrightarrow x=\zeta(x+y).
\end{equation*}
Since $\beta=\Id-\zeta$, setting $s=x+y$ shows that $(x,y)\in\graph(\mmope)$ is equivalent to $(x,y)=(\zeta(s),\beta(s))$.
Since $0\in T(0)$ this gives $\beta(0)=\zeta(0)=0$. Finally, the existence of $M_1$ and $M_2$
in \eqref{hyp:zeta} follows from the sublinearity of $\mmope$. If $(x,y)\in \graph(\mmope)$ then
$|y|\le T_1|x|+T_2$ and $x=\zeta(x+y)$, which gives
$|x+y| \le ((1+T_1) |\zeta(x+y)| + T_2)$.
\end{proof}

Using this lemma, we recast \eqref{eq:model} as
\begin{equation}\left\{
\begin{aligned} 
&\partial_t \mmope(z)-\dive \left( a(x,\nu(z+\mmope(z)),\nabla z) \right)= f &\quad &\mbox{in } \Omega\times(0,T),\\
& \mmope(z)(\cdot,0) = b^{\rm ini} &\quad &\mbox{in } \Omega,\\
& z=0 &\quad &\mbox{on } \partial\Omega\times(0,T).
\end{aligned}\right. \label{eq:modelmonot} \tag{PM}
\end{equation}
Hypotheses \eqref{hyp:beta} and \eqref{hyp:zeta} translate into
\bhyp{hyp:H}
\mbox{$\mmope$ is a maximal monotone operator with domain $\RR$,
$0\in \mmope(0)$}\\
\mbox{and $\mmope$ is sublinear in the sense that there exist $T_1,T_2\ge 0$ such that,}\\
\mbox{for all $x\in \RR$ and all $y\in \mmope(x)$, $|y|\le T_1|x|+T_2$.}
\ehyp
Hypothesis \eqref{hyp:pbeta} becomes
\begin{equation}\left\{
	\begin{aligned}
	&\mbox{\rm (I)}&&\mbox{$p\geq 2$,}\\
	&\mbox{or}\\
	&\mbox{\rm (II)}&&\mbox{$\frac{2d}{d+2}<p<2$ and there are positive constants 
	$T_3$, $T_4$ such that}\\
	&&&\mbox{for all $(x,y)\in \graph(\mmope)$, $|y|\geq T_{3}|x| - T_{4}$,}\\
	&\mbox{or}\\
	&\mbox{\rm (III)}&&\mbox{$1< p\leq \frac{2d}{d+2}$, there are positive constants
	$T_3$, $T_4$ such that}\\
	&&&\mbox{for all $(x,y)\in \graph(\mmope)$, $|y|\geq T_{3}|x| - T_{4}$, and $\mmope$ is
	strictly monotone.}
	\end{aligned}\right.\label{hyp:pbetamonot}
\end{equation}

In \eqref{eq:modelmonot}, $\nu$ is defined as the anti-derivative of $\zeta'\beta'$, where
$\zeta = \mathcal{R}(\mmope)$ and $\beta = \Id - \zeta$.
The reciprocal $\mmope^{-1}$ of $\mmope$ is itself a maximal monotone operator, and the
function $ \zeta(\beta^{r}(s)) $ in \eqref{def:B} can be computed in terms of $\mmope^{-1}$:
$\zeta(\beta^r(s))= \inf \mmope^{-1}(s)$ if $s> 0$, $\zeta(\beta^r(0))=0$,
and $\zeta(\beta^r(s))=\sup \mmope^{-1}(s)$ if $s<0$.
We then see that, for all $s$ in the domain of $\mmope^{-1}$,
$\mmope^{-1}(s)$ is the convex sub-differential $\partial B(s)$ of $B$ at $s$. 

% --- NOTION OF SOLUTION TO MAXMON PROBLEM ---
\begin{definition}\label{def:solutionmonot}
Under Hypotheses \eqref{hyp:H} and \eqref{hyp:acoerc}--\eqref{hyp:uinif}, 
take a measurable function $b^{\rm ini}$ satisfying
$b^{\rm ini}(x)\in \mmope(\uini(x))$ for a.e. $x\in\O$.
A \emph{solution} to \eqref{eq:modelmonot} is a pair of functions $(z,b)$ satisfying
\begin{equation*}\left\{
	\begin{gathered}
	z \in \LSobo{p}{1}{p}\,,\;b(x,t)\in \mmope(z(x,t))\hbox{ for a.e. } (x,t)\in\O\times(0,T), \\
	B(b)\in \Leb{\infty}{1}, \quad b\in C([0,T];L^2(\O)\weak),\\
	\partial_t b\in \LSob{p'}{-1}{p'}, \quad b(\cdot,0) = b^{\rm ini}\mbox{ in $L^2(\O)$},\\
	\int_{0}^{T}\langle \partial_t b(\cdot,t), v(\cdot,t)\rangle_{W^{-1,p'},W^{1,p}_0}\ud t
	+ \int_{0}^{T}\int_{\O}a(x,\nu((b+z)(x,t)),\nabla z(x,t))\cdot\nabla v(x,t) \ud x\ud t \\
	= \int_{0}^{T}\langle f(\cdot,t),v(\cdot,t)\rangle_{W^{-1,p'},W^{1,p}_{0}} \ud t
	\quad \forall v\in L^p(0,T;W^{1,p}_0(\O)).
	\end{gathered}\right. \label{eq:solutionmonot}
\end{equation*}
\end{definition}

\begin{remark}The sublinearity of $\mmope$ ensures that $b^{\rm ini}\in L^2(\O)$
since $\uini\in L^2(\O)$.
\end{remark}

The following theorem is an immediate consequence of Theorem \ref{th:main} and
Corollary \ref{cor:exist}. We simply take
$u = b+z$, which implies $\beta(u)=b$ and $\zeta(u)=z$ since $(z,b)\in \graph(\mmope)$.

% --- MAXMON CONVERGENCE THEOREM ---
\begin{theorem}\label{th:mainmonot}
Under Hypotheses \eqref{hyp:acoerc}--\eqref{hyp:uinif}, \eqref{hyp:H} and
\eqref{hyp:pbetamonot}, \eqref{eq:modelmonot} has at least one solution.

Moreover, let $(\mmope_n,\an,\fn,\uinin)_{n\in\NN}$ be a sequence that converges to 
$(\mmope,a,f,\uini)$ in the following sense:
\begin{equation*}\left\{
	\begin{aligned}
	&\mbox{$ \mathcal{R}(\mmope_n)$ and $\nun$ converge locally uniformly on $\RR$ to $ \mathcal{R}(\mmope)$ and $\nu$ respectively;}\\
	&\mbox{for almost every $x\in\O$, $\an(x,\cdot,\cdot)\to a(x,\cdot,\cdot)$ locally
	uniformly on $\RR\times\RR^d$;}\\
	&\mbox{$\fn\to f$ in $\LSob{p'}{-1}{p'}$ and $\uinin\to \uini$ in $L^{2}(\O)$.}
	\end{aligned}\right. \label{convergencesmonot}
\end{equation*}
Assume that $(\mmope,a,f,\uini)$ and $(\mmope_n,\an,\fn,\uinin)$
(for every $n\in\NN$) satisfy \eqref{hyp:H} and \eqref{hyp:acoerc}--\eqref{hyp:uinif}, 
and that the constants $T_1$, $T_2$, $T_3$, $T_4$, $\underline{a}$, $\mu$ and the function $\overline{a}$  
are independent of $n$. Let $(z_n,b_n)$ be a solution of \eqref{eq:modelmonot} with
$(\mmope,a,f,\uini)$ replaced with $(\mmope_n,\an,\fn,\uinin)$. Then 
there is a solution $(z,b)$ of \eqref{eq:modelmonot} such that, up to a subsequence,
\begin{equation*}\left\{
	\begin{aligned}
	&b_n \to b \quad \mbox{in $C([0,T]; L^{2}(\O)\weak)$,}\\
	&\nun(b_n + z_n) \to \nu(b+z) \quad \mbox{in $C([0,T]; L^{2}(\O))$, and}\\
	&z_n\weakto z\quad \mbox{weakly in }\LSobo{p}{1}{p}.
	\end{aligned}\right.\label{eq:mainconvmonot}
\end{equation*}
If in addition we assume that $a$ is strictly
monotone, that is, the inequality in \eqref{hyp:amono} with $\chi\not=\xi$
is strict, then $z_n \to z$ strongly in $\LSobo{p}{1}{p}$.
\end{theorem}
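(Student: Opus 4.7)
The plan is to reduce Theorem \ref{th:mainmonot} to Theorem \ref{th:main} and Corollary \ref{cor:exist} via the bijection of Lemma \ref{lem:maxmongr} between maximal monotone operators satisfying \eqref{hyp:H} and pairs $(\beta,\zeta)$ satisfying \eqref{hyp:beta}--\eqref{hyp:zeta}. Write $\zeta_n=\mathcal{R}(\mmope_n)$, $\beta_n=\Id-\zeta_n$, and define $\zeta,\beta$ analogously from $\mmope$. Lemma \ref{lem:maxmongr} gives $L_{\beta_n}=L_{\zeta_n}=1$ and $\beta_n(0)=\zeta_n(0)=0$. The sublinearity constants $T_1,T_2$ translate into $M_1=1/(1+T_1)$, $M_2=T_2/(1+T_1)$ for \eqref{hyp:zeta}, and a parallel calculation from $|y|\ge T_3|x|-T_4$ gives $M_3=T_3/(1+T_3)$, $M_4=T_4/(1+T_3)$. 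Strict monotonicity of $\mmope$ in case (III) forces strict monotonicity of $\beta$: if $s_1<s_2$ with $\beta(s_1)=\beta(s_2)$ then $\zeta(s_1)=s_1-\beta(s_1)<s_2-\beta(s_2)=\zeta(s_2)$, yielding two points of $\graph(\mmope)$ with identical $y$-coordinates and distinct $x$-coordinates, a contradiction. Local uniform convergence $\mathcal{R}(\mmope_n)\to\mathcal{R}(\mmope)$ directly gives $\zeta_n\to\zeta$ locally uniformly, hence $\beta_n\to\beta$ by subtraction; $\nu_n\to\nu$ is assumed.

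For the existence claim, given data $(\mmope,a,f,\uini)$ and a measurable selection $b^{\mathrm{ini}}(x)\in \mmope(\uini(x))$ (which belongs to $L^2(\O)$ thanks to sublinearity and $\uini\in L^2(\O)$), set $u^{\mathrm{ini}}:=\uini+b^{\mathrm{ini}}\in L^2(\O)$. By the graph representation in Lemma \ref{lem:maxmongr}, $\zeta(u^{\mathrm{ini}})=\uini$ and $\beta(u^{\mathrm{ini}})=b^{\mathrm{ini}}$ a.e. Corollary \ref{cor:exist} yields a solution $u$ of \eqref{eq:model}. Setting $(z,b):=(\zeta(u),\beta(u))$, every requirement of Definition \ref{def:solutionmonot} is satisfied: the pointwise constraint $b(x,t)\in \mmope(z(x,t))$ holds because $(\zeta(u),\beta(u))\in\graph(\mmope)$; the initial trace is $b(\cdot,0)=\beta(u^{\mathrm{ini}})=b^{\mathrm{ini}}$; and the integral identity coincides with \eqref{eq:solution} under $\nabla z=\nabla\zeta(u)$ and $\nu(u)=\nu(b+z)$.

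For the convergence statement, let $(z_n,b_n)$ solve the perturbed problem and set $u_n:=b_n+z_n$. The pointwise inclusion $(z_n(x,t),b_n(x,t))\in\graph(\mmope_n)$ combined with Lemma \ref{lem:maxmongr} gives $\zeta_n(u_n)=z_n$ and $\beta_n(u_n)=b_n$, so $u_n$ solves \eqref{eq:perturbed} with initial datum $u^{\mathrm{ini}}_n:=b_n(\cdot,0)+\uini_n\in L^2(\O)$. The main obstacle I expect is verifying the strong $L^2$-convergence $u^{\mathrm{ini}}_n\to u^{\mathrm{ini}}$ required by Theorem \ref{th:main}, since only $\uini_n\to\uini$ in $L^2$ is directly assumed. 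Writing $b^{\mathrm{ini}}_n:=b_n(\cdot,0)$, sublinearity gives $|u^{\mathrm{ini}}_n|\le(1+T_1)|\uini_n|+T_2$ a.e., so $u^{\mathrm{ini}}_n$ is bounded in $L^2(\O)$ and, along a subsequence, $b^{\mathrm{ini}}_n\weakto b^{\mathrm{ini}}$ weakly in $L^2$. A Minty--Browder-type argument applied to the graphs $\graph(\mmope_n)$, using the locally uniform convergence of $\mmope_n$ to $\mmope$, identifies $(\uini,b^{\mathrm{ini}})\in\graph(\mmope)$; the pointwise envelope dominated by $C(|\uini_n|+1)$ together with the strong convergence of $\uini_n$ then upgrades the weak convergence to strong in $L^2$, so that $u^{\mathrm{ini}}_n\to u^{\mathrm{ini}}:=\uini+b^{\mathrm{ini}}$ strongly in $L^2(\O)$.

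Once this strong convergence is in hand, Theorem \ref{th:main} furnishes a solution $u$ of \eqref{eq:model} satisfying \eqref{eq:mainconv}. Setting $z:=\zeta(u)$ and $b:=\beta(u)$ and noting $\nu_n(u_n)=\nu_n(b_n+z_n)$, the three convergences translate directly to those claimed in Theorem \ref{th:mainmonot}, and the existence of $(z,b)$ as a solution of \eqref{eq:modelmonot} follows as in the existence argument above. Under strict monotonicity of $a$, the strong convergence $z_n\to z$ in $\LSobo{p}{1}{p}$ is exactly \eqref{eq:strgrad} after the substitution $z_n=\zeta_n(u_n)$.
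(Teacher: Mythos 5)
Your overall strategy---parametrising $\graph(\mmope_n)$ by $(\zeta_n,\beta_n)=(\mathcal R(\mmope_n),\Id-\mathcal R(\mmope_n))$, setting $u_n=b_n+z_n$ so that $\zeta_n(u_n)=z_n$ and $\beta_n(u_n)=b_n$, and then invoking Theorem \ref{th:main} and Corollary \ref{cor:exist}---is exactly the paper's, which states the result as an immediate consequence of those two via $u=b+z$. Your translation of the constants ($M_1,M_2,M_3,M_4$ from $T_1,\dots,T_4$), the strict monotonicity of $\beta$ in case (III) of \eqref{hyp:pbetamonot}, and the existence half with $u^{\mathrm{ini}}=\uini+b^{\mathrm{ini}}$ are all correct.

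The step you yourself flag as the main obstacle is, however, resolved incorrectly. From $b^{\mathrm{ini}}_n\weakto b^{\mathrm{ini}}$ weakly in $L^2(\O)$ and the pointwise bound $|b^{\mathrm{ini}}_n|\le T_1|\uinin|+T_2$ you conclude that $u^{\mathrm{ini}}_n=\uinin+b^{\mathrm{ini}}_n$ converges \emph{strongly} in $L^2(\O)$; but weak convergence plus an $L^2$-dominating envelope never upgrades to strong convergence. Concretely, take the Stefan-type graph with $\mmope_n=\mmope$, $\mmope(0)=[0,1]$, $\uinin=\uini=0$, and the admissible selections $b^{\mathrm{ini}}_n(x)=\sin^2(n x_1)$: then $u^{\mathrm{ini}}_n=b^{\mathrm{ini}}_n\weakto \tfrac12$ weakly but not strongly, so the hypothesis ``$\uinin\to\uini$ in $L^2(\O)$'' of Theorem \ref{th:main} fails for the translated data and that theorem cannot be invoked verbatim. (Your Minty-type identification $(\uini,b^{\mathrm{ini}})\in\graph(\mmope)$ a.e.\ is fine; only the upgrade to strong convergence is false.) To close the argument one must either impose/arrange convergence of the selections $b^{\mathrm{ini}}_n$, or rerun the proof of Theorem \ref{th:main} noting that the initial data enter only through the $L^2$-bound on the $u$-variable datum, the convergence of $\betan(u^{\mathrm{ini}}_n)=b^{\mathrm{ini}}_n$ (where weak convergence suffices to identify the initial trace), and the convergence of $\int_\O B_n(b^{\mathrm{ini}}_n)\ud x$; the latter can be salvaged because $B_n(\betan(s))=\zetan(s)\betan(s)-\int_0^s\betan(q)\zetan'(q)\ud q$ and the second term is constant across plateaux of $\zetan$, so $\int_\O B_n(b^{\mathrm{ini}}_n)\ud x$ depends on the selection only through $\int_\O \uinin\, b^{\mathrm{ini}}_n\ud x$, which passes to the limit by weak--strong convergence. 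To be fair, the paper's own two-line proof does not address this point either, so your concern is well placed, but the fix you propose does not work as written.
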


\begin{remark}
Blanchard and Porretta \cite{bp05} prove the $\Leb{\infty}{1}$-stability of 
renormalised solutions to
\begin{equation*}\left\{
\begin{aligned} 
&\partial_t \mmope(u)-\dive \left( a(x, u,\nabla u) \right) + \dive\left(\Phi(u)\right)
= f &\quad &\mbox{in } \Omega\times(0,T),\\
&\mmope(u)(x,0) = b_{0}(x) &\quad &\mbox{in } \Omega,\\
&u=0 &\quad &\mbox{on } \partial\Omega\times(0,T),
\end{aligned}\right.
\end{equation*}
with $f\in L^{1}(\O\times(0,T))$ and $b_{0}\in L^{1}(\O)$. They
assume that $\mmope$ is a maximal monotone graph on $\RR$, $\mmope^{-1}\in C(\RR)$ and $a$ 
is a Leray-Lions operator. Although the continuity assumption on $\mmope^{-1}$ -- which prevents $\mmope$ from having
plateaux -- is not required for the stability result, it is necessary in their existence
theorem for identifying
$u$ as the pointwise limit of solutions to regularised problems using compactness arguments.
If $p>\frac{2d}{d+2}$ we overcome this assumption on $\mmope$ in the variational
setting by using monotonicity and compensated compactness arguments; see
Section \ref{ssec:step2}. Indeed, it may be interesting to determine whether
similar arguments may be used in the setting of renormalised solutions
in \cite{bp05}.
If $p$ is `too small' -- that is, in case (III) of Hypothesis \eqref{hyp:pbetamonot} --
we must also assume that $\mmope$ (respectively $\beta$ outside the present section)
does not have any plateaux, but we still identify weak limits by montony
and compensated compactness arguments rather than by pointwise convergence.
\end{remark}

% --- PRELIMINARIES ---
\section{Preliminaries} \label{sec:prelims}

% --- PROPERTIES OF B ---
\subsection{Properties of $B$} \label{ssec:B}

We recall here two lemmas proved in \cite{dey14}. Lemma \ref{lem:techineq}
states some properties of the functions $\nu$ and $B$. The uniform convexity property 
\eqref{eq:Bunifconvex} plays a critical role in our proof of the uniform
temporal convergence of $\nun(\un)$. Lemma \ref{lem:integparts} brings together two
identities --- an integration-by-parts formula and an energy
equality --- and some continuity properties of the solution. Although the
integration-by-parts formula \eqref{eq:integparts}
apparently follows from the formal relation
$\zeta(u) \partial_t (\beta(u))=\zeta(u)\beta'(u)\partial_t u=(B\circ\beta)'(u)\partial_tu
=\partial_t (B(\beta(u))$, its rigorous justification is quite technical, owing to the
lack of regularity of $u$.

% properties of nu, B, etc.
\begin{lemma}\label{lem:techineq}
Assume \eqref{assumptions}. Then for every $a,b\in\RR$,
\begin{subequations}
\begin{equation}
|\nu(a) - \nu(b)| \leq L_{\beta}|\zeta(a) - \zeta(b)|, \quad \mbox{and}\label{eq:nubetatrans}
\end{equation}
\begin{equation} \label{eq:nubetatrans2}
[\nu(a) - \nu(b)]^{2} \leq L_{\beta}L_{\zeta}[\zeta(a)-\zeta(b)][\beta(a) - \beta(b)].
\end{equation}
The functions $B:\overline{R_{\beta}}\to[0,\infty]$ and
$B\circ\beta : \RR \to [0,\infty)$ are continuous, and for all $s\in\RR$,
\begin{equation} \label{eq:relBbeta}
B(\beta(s)) = \int_{0}^{s}\zeta(q)\beta'(q)\ud q.
\end{equation}
There are positive constants $K_{1}, K_{2}$ and $K_{3}$, depending only upon
$L_{\beta}$, $L_{\zeta}$ and the constants $M_{1},M_2$ in \eqref{hyp:zeta},
such that for all $s\in\RR$,
\begin{equation}
K_{1}\beta(s)^{2} - K_{2}\leq B(\beta(s))\leq K_{3}s^{2}. \label{eq:growthB}
\end{equation}
Finally, for every $a,b\in\RR$,
\begin{equation} \label{eq:Bunifconvex}
[\nu(a) - \nu(b)]^{2}\leq 4L_{\beta}L_{\zeta}\left[B(\beta(a))+ B(\beta(b))-2 B\left(\frac{\beta(a)+\beta(b)}{2}\right)\right].
\end{equation}
\end{subequations}
\end{lemma}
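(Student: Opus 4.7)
The plan is to treat the five displayed assertions in order of increasing difficulty, with the uniform convexity estimate \eqref{eq:Bunifconvex} being the substantive step; the others are calculus exercises once $\nu'=\beta'\zeta'$ and the definition of $B$ are in hand.

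The first two inequalities come from writing $\nu(a)-\nu(b)=\int_{b}^{a}\beta'(q)\zeta'(q)\,\ud q$ and estimating. For \eqref{eq:nubetatrans} I would just bound $\beta'\le L_\beta$ and pull it outside, leaving $L_\beta|\zeta(a)-\zeta(b)|$. For \eqref{eq:nubetatrans2} I would split $\beta'\zeta'=\sqrt{\beta'}\cdot\sqrt{\beta'}\zeta'$ and apply Cauchy--Schwarz:
\begin{equation*}
[\nu(a)-\nu(b)]^{2}\le \Bigl|\int_{b}^{a}\beta'(q)\,\ud q\Bigr|\cdot\Bigl|\int_{b}^{a}\beta'(q)(\zeta'(q))^{2}\,\ud q\Bigr|.
\end{equation*}
The first factor equals $|\beta(a)-\beta(b)|$; in the second I bound one factor of $\zeta'$ by $L_\zeta$ and the stray $\beta'$ by $L_\beta$, producing $L_\beta L_\zeta |\zeta(a)-\zeta(b)|$.

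For the identity \eqref{eq:relBbeta} and the continuity of $B$ on $\overline{R_\beta}$, I would perform the change of variable $\sigma=\beta(q)$ in the defining integral of $B(\beta(s))$. On any plateau of $\beta$, $\beta'$ vanishes, so the putative right-hand side $\int_0^s \zeta(q)\beta'(q)\,\ud q$ receives no contribution there, and $\sigma=\beta(q)$ is constant, so neither does the left-hand side: the change of variable is rigorous (up to a careful Lebesgue--Stieltjes justification). Continuity of $B$ and of $B\circ\beta$ then follows because the integrands are locally bounded. For the growth bound \eqref{eq:growthB}, the upper estimate is obtained by inserting $|\zeta(q)|\le L_\zeta|q|$ and $|\beta'|\le L_\beta$ directly into \eqref{eq:relBbeta}. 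For the lower estimate, I would work from the definition of $B$: using $|\beta^{r}(\sigma)|\ge |\sigma|/L_\beta$ together with $|\zeta(q)|\ge M_1|q|-M_2$ yields $|\zeta(\beta^{r}(\sigma))|\ge (M_1/L_\beta)|\sigma|-M_2$, and integration against $\ud\sigma$ from $0$ to $\beta(s)$, followed by Young's inequality on the linear cross-term, produces the desired form $K_1\beta(s)^{2}-K_2$.

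The uniform convexity estimate \eqref{eq:Bunifconvex} is the main obstacle. Setting $x=\beta(a)$, $y=\beta(b)$, $m=(x+y)/2$ and $c=\beta^{r}(m)$, the identity \eqref{eq:relBbeta} recasts the right-hand side as
\begin{equation*}
B(\beta(a))+B(\beta(b))-2B(m)=\int_{a}^{c}(\zeta(c)-\zeta(t))\beta'(t)\,\ud t+\int_{c}^{b}(\zeta(t)-\zeta(c))\beta'(t)\,\ud t,
\end{equation*}
two nonnegative pieces. I would split $\nu(b)-\nu(a)=[\nu(c)-\nu(a)]+[\nu(b)-\nu(c)]$ and use $(u+v)^{2}\le 2u^{2}+2v^{2}$, reducing matters to a one-sided inequality such as $[\nu(c)-\nu(a)]^{2}\le 2L_\beta L_\zeta\int_{a}^{c}(\zeta(c)-\zeta(t))\beta'(t)\,\ud t$. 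The plan for this one-sided bound is to rewrite its right-hand side via Fubini as $\int_{a}^{c}\zeta'(t)(\beta(t)-\beta(a))\,\ud t$ and then apply Cauchy--Schwarz to $\nu(c)-\nu(a)=\int_a^c \zeta'(t)\beta'(t)\,\ud t$ with a weight designed to produce the factor $\beta(t)-\beta(a)$, controlling the complementary integral by $L_\beta L_\zeta$ via the Lipschitz bounds on $\beta$ and $\zeta$. The delicate point --- and where I expect to spend the most effort --- is choosing the weight so that both factors are integrable (the naive choice is singular at $t=a$) while still producing the sharp constant $4L_\beta L_\zeta$; working on the $\sigma=\beta(t)$ side, so that $B(x)+B(y)-2B(m)=\int_x^y (\zeta\circ\beta^{r})'(\sigma)\min(\sigma-x,y-\sigma)\,\ud\sigma$ appears naturally, may simplify the bookkeeping.
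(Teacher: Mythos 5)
Your treatment of \eqref{eq:nubetatrans}, \eqref{eq:nubetatrans2}, \eqref{eq:relBbeta} and \eqref{eq:growthB} is sound (note only that continuity of $B$ at the endpoints of $\overline{R_\beta}$, where $B$ may take the value $+\infty$, comes from monotone convergence rather than local boundedness, and that the change of variables in \eqref{eq:relBbeta} is justified exactly by your plateau remark: where $\beta^r(\beta(q))\neq q$ one has $\beta'(q)=0$ a.e.). For \eqref{eq:Bunifconvex} your skeleton is also the right one, and it is the one used in the reference \cite{dey14} to which the paper defers this proof: choose $c$ with $\beta(c)=\tfrac{\beta(a)+\beta(b)}{2}$ (the case $\beta(a)=\beta(b)$ being trivial since then $\nu(a)=\nu(b)$), use \eqref{eq:relBbeta} and the cancellation of the $\zeta(c)$ terms to write the bracket as the two nonnegative pieces $\int_a^c(\zeta(c)-\zeta(t))\beta'(t)\ud t+\int_c^b(\zeta(t)-\zeta(c))\beta'(t)\ud t$, and reduce via $(u+v)^2\le 2u^2+2v^2$ to a one-sided bound with constant $2L_\beta L_\zeta$.

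The gap is precisely at that one-sided bound: you propose a weighted Cauchy--Schwarz, observe correctly that the natural weight is singular at $t=a$ (indeed the resulting complementary integral diverges logarithmically), and leave the choice of a workable weight open. No weight is needed: the bound follows from a pointwise minorization of the integrand. For $t\in[a,c]$ one has $\nu(c)-\nu(t)=\int_t^c\zeta'\beta'\le L_\beta(\zeta(c)-\zeta(t))$ and $\beta'(t)\ge \nu'(t)/L_\zeta$, whence
\begin{equation*}
\int_a^c(\zeta(c)-\zeta(t))\beta'(t)\ud t\;\ge\;\frac{1}{L_\beta L_\zeta}\int_a^c(\nu(c)-\nu(t))\nu'(t)\ud t\;=\;\frac{[\nu(c)-\nu(a)]^2}{2L_\beta L_\zeta},
\end{equation*}
the last equality being the exact integration of $-\frac{\ud}{\ud t}\frac{(\nu(c)-\nu(t))^2}{2}$ (licit since $\nu$ is Lipschitz); the symmetric estimate holds on $[c,b]$, and combining with $(u+v)^2\le 2u^2+2v^2$ gives exactly the constant $4L_\beta L_\zeta$. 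Equivalently, in your Fubini-transformed form $\int_a^c\zeta'(t)(\beta(t)-\beta(a))\ud t$, use $\zeta'\ge\nu'/L_\beta$ and $\beta(t)-\beta(a)\ge(\nu(t)-\nu(a))/L_\zeta$. Also beware that your alternative bookkeeping identity on the $\sigma=\beta(t)$ side should be written with the Lebesgue--Stieltjes measure $\ud(\zeta\circ\beta^r)(\sigma)$, not a density $(\zeta\circ\beta^r)'(\sigma)\ud\sigma$, since $\zeta\circ\beta^r$ jumps across plateaux of $\beta$ on which $\zeta$ is not constant.
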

Before stating the next lemma, a few remarks on notation are necessary.
The discussion following Definition \ref{def:solution} concerning the continuity
of $\beta(u)$ neglects a subtlety that one must account for in order to give meaning
to the convergences \eqref{eq:mainconv}. Indeed, by the statement 
$\beta(u)\in C([0,T]; L^{2}(\O)\weak)$, one understands that the mapping 
$(x,t)\mapsto\beta(u(x,t))$ is equal almost everywhere on $\O\times(0,T)$ to a function
$Z$ that is continuous as a map from $[0,T]$ to $L^{2}(\O)\weak$. 
We henceforth write $\ol{\beta(u)}$ for $Z$; similarly $\ol{\nu(u)}$ for the continuous 
in time almost-everywhere representative of $(x,t)\mapsto\nu(u(x,t))$ (see part (ii) of the following lemma).
This distinction is essential in the present context, where we are frequently
concerned with the values of these functions at a particular point in time. The composition
$\beta(u(\cdot,\cdot))$ is only defined up to null sets in $\O\times(0,T)$, so for a
particular $t\in[0,T]$ the expression $\beta(u(\cdot,t))$ is ill-defined. The expression
$\ol{\beta(u)}(\cdot, t)$ is, however, well-defined, and we take care to use
the notation $\beta(u)$ (without the bar) only when this quantity is used in an average sense.
Nonetheless, for the sake of clarity Theorem \ref{th:main} is stated without
this distinction.

% integration-by-parts/energy equality/continuity of beta(u), nu(u)
\begin{lemma}\label{lem:integparts}
Let \eqref{assumptions} hold.
\begin{enumerate}[(i)]
\item If $v$ is a measurable function on $\O\times(0,T)$ such that
$\zeta(v)\in L^p(0,T;W^{1,p}_0(\O))$, $B(\beta(v))\in L^\infty(0,T;L^1(\O))$,
$\beta(v)\in C([0,T];L^2(\O)\weak)$ and $\partial_t \beta(v)\in L^{p'}(0,T;W^{-1,p'}(\O))$ then
the mapping $[0,T]\ni t \mapsto \int_{\O}B(\ol{\beta(v)}(x,t))\ud x \in [0,\infty)$
is continuous and bounded, and for all $T_0\in [0,T]$,
\begin{equation}
\int_{0}^{T_{0}}\langle \partial_{t}\beta(v)(\cdot,t),\zeta(v(\cdot,t))\rangle_{W^{-1,p'},W^{1,p}_{0}}\ud t
= \int_{\O}B(\ol{\beta(v)}(x,T_{0}))\ud x - \int_{\O}B(\ol{\beta(v)}(x,0))\ud x. \label{eq:integparts}
\end{equation}
\item If $u$ is a solution to \eqref{eq:model} then for all $T_0\in [0,T]$,
\begin{multline} 
\label{eq:energy}
\int_{\O}B(\ol{\beta(u)}(x,T_{0}))\ud x + \int_{0}^{T_{0}}\int_{\O}a(x,\nu(u),\nabla\zeta(u))\cdot\nabla\zeta(u)\ud x\ud t\\
= \int_{\O}B(\beta(\uini(x)))\ud x + \int_{0}^{T_{0}}\langle f(\cdot,t),\zeta(u)(\cdot,t)\rangle_{W^{-1,p'},W^{1,p}_{0}} \ud t
\end{multline}
and the function $\nu(u)$ is continuous from $[0,T]$ into $L^{2}(\O)$.
\end{enumerate}
\end{lemma}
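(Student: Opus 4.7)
The formal calculation $\partial_t B(\beta(v)) = \zeta(v)\partial_t\beta(v)$ is immediate from \eqref{eq:relBbeta}; my plan for the rigorous proof of (i) is to establish it as a chain rule for the convex functional $G(w) := \int_\O B(w)\,dx$ along the $L^2$-valued trajectory $t\mapsto \beta(v)(t)$. The key structural ingredient is that $\zeta(v)(t)$ is a measurable selection of $\partial G(\beta(v)(t))$ for a.e.\ $t$, since pointwise $\zeta(v(x,t))\in\partial B(\beta(v(x,t)))$ (by the identification $\mmope^{-1} = \partial B$ of Section~\ref{sec:maxmonop}; equivalently, it follows directly from \eqref{eq:relBbeta} and the monotonicities of $\zeta,\beta$).

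I would proceed by time regularization. Choose a smooth symmetric mollifier $\rho_h$ and set $\beta^h := \rho_h *_t \beta(v)$ and $\zeta^h := \rho_h *_t \zeta(v)$ (suitably extended at the endpoints so that $\beta^h(0) \weakto \ol{\beta(v)}(0)$ and $\beta^h(T_0) \weakto \ol{\beta(v)}(T_0)$ in $L^2(\O)$), so that $\beta^h \in C^1([0,T];L^2(\O))$ with $\partial_t\beta^h\to\partial_t\beta(v)$ in $\LSob{p'}{-1}{p'}$ and $\zeta^h\to\zeta(v)$ in $\LSobo{p}{1}{p}$. The two-sided subdifferential inequality
\begin{equation*}
\zeta(v)(x,s)[\beta(v)(x,r)-\beta(v)(x,s)]\le B(\beta(v)(x,r)) - B(\beta(v)(x,s)) \le \zeta(v)(x,r)[\beta(v)(x,r)-\beta(v)(x,s)],
\end{equation*}
valid for a.e.\ $(x,r,s)$, can be telescoped along a partition of $[0,T_0]$ and then averaged against $\rho_h$ to sandwich $\int_\O [B(\beta^h(T_0))-B(\beta^h(0))]\,dx$ between two duality-pairing-type quantities involving $\zeta^h$ (or time-shifted variants) and $\partial_t\beta^h$. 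Passing to the limit $h\to 0$: on the left, weak continuity of $\ol{\beta(v)}$ into $L^2(\O)$ and weak lower semi-continuity of $G$ give the lower bound $\int_\O [B(\ol{\beta(v)}(T_0)) - B(\ol{\beta(v)}(0))]\,dx$, while Jensen's inequality $G(\beta^h)\le \rho_h *_t G(\beta(v))$ supplies a matching upper bound; on the right, the strong convergence of $\zeta^h$ and weak convergence of $\partial_t\beta^h$ make both sandwich quantities converge to $\int_0^{T_0}\langle\partial_t\beta(v),\zeta(v)\rangle\,dt$, yielding \eqref{eq:integparts}. Continuity and boundedness of $t\mapsto G(\ol{\beta(v)}(t))$ then follow, because \eqref{eq:integparts} applied on arbitrary subintervals exhibits this map as absolutely continuous with integrable derivative.

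For (ii), the energy identity \eqref{eq:energy} is obtained by testing the weak formulation \eqref{eq:solution} with the admissible choice $\zeta(u)\carac_{(0,T_0)}$ (admissible after a standard truncation/density argument, since $\zeta(u)\in \LSobo{p}{1}{p}$) and applying \eqref{eq:integparts} with $v=u$ to handle the time-derivative term. For the strong $L^2$-continuity of $\ol{\nu(u)}$, I would integrate the uniform convexity estimate \eqref{eq:Bunifconvex}:
\begin{equation*}
\|\ol{\nu(u)}(t) - \ol{\nu(u)}(s)\|_{L^2(\O)}^2 \le 4L_\beta L_\zeta \int_\O\!\left[B(\ol{\beta(u)}(t)) + B(\ol{\beta(u)}(s)) - 2B\bigl(\tfrac{\ol{\beta(u)}(t)+\ol{\beta(u)}(s)}{2}\bigr)\right]dx.
\end{equation*}
As $s\to t$, the sum of the first two integrals tends to $2\int_\O B(\ol{\beta(u)}(t))\,dx$ by the continuity just established in (i); the midpoint $\tfrac{1}{2}[\ol{\beta(u)}(t)+\ol{\beta(u)}(s)]$ converges to $\ol{\beta(u)}(t)$ weakly in $L^2(\O)$, so weak lower semi-continuity of $G$ yields $\liminf_{s\to t}\int_\O B\bigl(\tfrac{\ol{\beta(u)}(t)+\ol{\beta(u)}(s)}{2}\bigr)\,dx \ge \int_\O B(\ol{\beta(u)}(t))\,dx$, forcing the bracket to $0$. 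The main obstacle throughout is the passage to the limit in the sandwich argument underlying (i), where both convexity-based bounds must converge to the same duality pairing --- this requires judicious choice of the mollification (including endpoint extensions) so that the telescoped increments match the distributional action of $\partial_t\beta(v)$ on $\zeta(v)$.
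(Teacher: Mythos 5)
The paper itself does not prove Lemma \ref{lem:integparts}: it is quoted from \cite{dey14}, so your proposal can only be measured against the standard argument behind that citation. Your strategy --- time mollification plus the two-sided subgradient inequality coming from $\zeta(v(x,t))\in\partial B(\beta(v(x,t)))$, weak lower semi-continuity (Lemma \ref{lem:fatouconv2}) at the endpoints, then \eqref{eq:energy} by testing \eqref{eq:solution} with $\zeta(u)\ch_{(0,T_0)}$, and the $L^2$-continuity of $\ol{\nu(u)}$ via \eqref{eq:Bunifconvex} --- is the natural one, and part (ii) is essentially complete (modulo the small point that \eqref{eq:Bunifconvex} should first be used along a.e.\ times $s,t$, the continuous representative $\ol{\nu(u)}$ being \emph{defined} afterwards by the resulting Cauchy property; it mirrors the mechanism of Section \ref{ssec:step4}). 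The genuine difficulties are concentrated in part (i), and your outline leaves exactly those unresolved.

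First, the two bounds you claim to ``match'' do not match at an arbitrary $T_0$. Weak continuity of $\ol{\beta(v)}$ and Lemma \ref{lem:fatouconv2} give $\liminf_{h\to0}\int_\O B(\beta^h(x,T_0))\ud x\ge\int_\O B(\ol{\beta(v)}(x,T_0))\ud x$, whereas Jensen's inequality bounds $\int_\O B(\beta^h(x,T_0))\ud x$ above only by a local average of $F(t):=\int_\O B(\beta(v)(x,t))\ud x$ near $T_0$; since $F$ is a priori merely bounded, that average converges to $F(T_0)$ at Lebesgue points, and nothing at this stage identifies its limit with $\int_\O B(\ol{\beta(v)}(x,T_0))\ud x$ for \emph{every} $T_0$ --- which is what \eqref{eq:integparts} asserts and what Step 4 requires (the times $T_n$ there are arbitrary). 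Moreover it is the \emph{difference} of the two endpoint terms that appears, so you need an upper bound at $T_0$ together with a lower bound at $0$; two lower semi-continuity bounds (or two Jensen bounds) do not combine. Deriving the continuity of $t\mapsto\int_\O B(\ol{\beta(v)}(x,t))\ud x$ afterwards ``from \eqref{eq:integparts} on subintervals'' is therefore circular: proving \eqref{eq:integparts} at every endpoint already encodes that continuity. The standard repair is to prove the identity first for a.e.\ pairs of times, conclude that $F$ agrees a.e.\ with the continuous function defined by the right-hand side, and then identify $\int_\O B(\ol{\beta(v)}(x,t))\ud x$ with that function at \emph{every} $t$: one inequality by weak lower semi-continuity along a.e.\ times, the reverse by integrating the opposite subgradient inequality (subgradient $\zeta(v)(\cdot,s)$ at a.e.\ $s$ near $t$) over a vanishing interval and controlling the extra term via $\norm{\partial_t\beta(v)}_{\LSob{p'}{-1}{p'}}$ and $\norm{\zeta(v)}_{\LSobo{p}{1}{p}}$. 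None of this appears in your outline, and it is the technical heart the paper itself flags (``its rigorous justification is quite technical'').

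Second, converting $\int_\O\zeta(v)(x,s)\,[\beta(v)(x,r)-\beta(v)(x,s)]\ud x$ into the pairing $\langle\beta(v)(\cdot,r)-\beta(v)(\cdot,s),\zeta(v)(\cdot,s)\rangle_{W^{-1,p'},W^{1,p}_0}$ requires $\zeta(v)(\cdot,s)\in L^2(\O)$, so that the $L^2$ scalar product and the duality coincide. The lemma is stated under \eqref{assumptions} for any $p\in(1,\infty)$; when $p\le\frac{2d}{d+2}$, $W^{1,p}_0(\O)$ does not embed into $L^2(\O)$ and the integral on the left is not even obviously finite. The paper meets the identical obstruction in the proof of Lemma \ref{lem:nutrans} and resolves it by truncation, replacing $\zeta$ by $\trunc_k\circ\zeta$ (with the correspondingly modified convex function in the subgradient inequality) and letting $k\to\infty$ at the end; your argument needs the same device, and it is not cosmetic, since the convexity inequalities must be rewritten for the truncated nonlinearity before the limit in $k$ can be taken.
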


Since $B$ plays such a critical role to our main result, we highlight
its stability properties in the following lemma.
\begin{lemma}\label{lem:convBn}
Assume \eqref{assumptions} and \eqref{convergences}. 
Define $B_n: \overline{R_{\betan}}\to [0,\infty]$ from $\zetan$, $\betan$ analogously to \eqref{def:B}, and
extend $B_n$ to $\RR$ by setting $B_n=+\infty$ outside $\overline{R_{\betan}}$.
Then
\begin{enumerate}[(i)]
\item\label{convBn:it1} $B$ and all $(B_n)_{n\in\NN}$ are convex lower semi-continuous
on $\RR$;
\item\label{convBn:it2} $\Bn\circ\betan\to B\circ\beta$ locally uniformly on $\RR$ as $n\to\infty$;
\item\label{convBn:it3} For any $z\in\RR$ and any sequence $(z_n)_{n\in\NN}$ that
converges to $z$, $B(z)\le \liminf_{n\to\infty} B_n(z_n)$.
\end{enumerate}
\end{lemma}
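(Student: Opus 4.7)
The plan is to address the three items in order, reducing (iii) to a subgradient argument that builds on (ii).

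For (i), I will invoke the observation from just after \eqref{def:B}: since the derivative $\zeta\circ\beta^{r}$ of $B$ is nondecreasing on $\overline{R_\beta}$, $B$ is convex there, and extension by $+\infty$ outside preserves convexity. The same reasoning applies to each $\Bn$. For lower semicontinuity, I will use the continuity of $B:\overline{R_\beta}\to [0,\infty]$ from Lemma \ref{lem:techineq}: at a boundary point of $\overline{R_\beta}$, sequences approaching from inside give $B$-values converging to the boundary value, while those from outside give $+\infty$, so the lsc inequality holds at every point of $\RR$.

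For (ii), I would use \eqref{eq:relBbeta} to split the difference as
\begin{equation*}
\Bn(\betan(s)) - B(\beta(s)) = \int_{0}^{s}[\zetan(q) - \zeta(q)]\,\betan'(q)\,\ud q + \int_{0}^{s}\zeta(q)\,[\betan'(q) - \beta'(q)]\,\ud q.
\end{equation*}
The first integral is bounded on compact $s$-intervals by $L_\beta\int_{0}^{s}|\zetan - \zeta|\,\ud q$, which vanishes uniformly by local uniform convergence of $\zetan$. An integration by parts recasts the second integral as $\zeta(s)[\betan(s) - \beta(s)] - \int_{0}^{s}\zeta'(q)[\betan(q) - \beta(q)]\,\ud q$; both terms vanish uniformly on compacts thanks to local uniform convergence of $\betan$ and the bound $|\zeta'|\le L_\zeta$.

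For (iii), the key observation I would establish is that $\zetan(s)\in \partial \Bn(\betan(s))$ for every $s\in\RR$ (and likewise $\zeta(s)\in \partial B(\beta(s))$). Indeed, if $[s^-,s^+]$ denotes the maximal interval on which $\betan$ takes the value $\betan(s)$, the identity $\Bn' = \zetan\circ\betan^{r}$ gives $\partial \Bn(\betan(s)) = [\zetan(s^-), \zetan(s^+)]$, which contains $\zetan(s)$ by monotonicity. The convexity of $\Bn$ then yields
\begin{equation*}
\Bn(z_n) \ge \Bn(\betan(s)) + \zetan(s)\,(z_n - \betan(s)),
\end{equation*}
and passing to the liminf using (ii) along with the local uniform convergences $\zetan\to\zeta$ and $\betan\to\beta$ gives
\begin{equation*}
\liminf_{n\to\infty}\Bn(z_n) \ge B(\beta(s)) + \zeta(s)\,(z - \beta(s))\qquad \forall\, s\in\RR.
\end{equation*}
Taking the supremum over $s$ on the right, I would recognise these as the tight affine minorants of $B$ with slope $\zeta(s)$, so $\sup_s[B(\beta(s)) + \zeta(s)(z-\beta(s))] = B(z)$ by the Fenchel--Moreau identity $B = B^{**}$ (applicable by (i)) together with the surjectivity $\zeta(\RR) = \RR$ guaranteed by \eqref{hyp:zeta}.

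The hardest part will be justifying this last identity when $z\notin \overline{R_\beta}$, where $B(z) = +\infty$. Assuming $z > \sup R_\beta$ (the opposite case being symmetric), I would send $s\to+\infty$: then $\beta(s)\to \sup R_\beta < z$, so $z - \beta(s)$ remains bounded below by a positive constant, while $\zeta(s)\to+\infty$ by \eqref{hyp:zeta}. The affine term therefore forces the supremum to $+\infty$, matching $B(z)$.
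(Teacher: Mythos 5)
Your proposal is correct, but it proves the lemma by a genuinely different route than the paper, most notably for item (\ref{convBn:it3}). For (\ref{convBn:it2}), the paper gets pointwise convergence from $\betan'\weakto\beta'$ weak-$\ast$ in $L^\infty(\RR)$ (the argument of Remark \ref{rem:nu}) and upgrades it to local uniform convergence by Arzel\`a--Ascoli, whereas your splitting of $\Bn(\betan)-B(\beta)$ via \eqref{eq:relBbeta} plus an integration by parts (using $\betan(0)=\beta(0)=0$ and $|\zeta'|\le L_\zeta$) is more elementary and already gives uniformity on compacts directly. For (\ref{convBn:it3}), the paper argues by an exhaustive four-case analysis (whether $z_n$ eventually lies in $\overline{R_{\betan}}$, whether $z\in\overline{R_\beta}$, and whether $\betan^r(z_n)$ stays bounded), using \eqref{def:B}, monotone convergence, Fatou and the coercivity of $\zetan$ in \eqref{hyp:zeta}; your convex-duality argument --- $\zetan(s)\in\partial \Bn(\betan(s))$, pass to the limit in the subgradient inequalities using (\ref{convBn:it2}), then identify $\sup_{s}\bigl[B(\beta(s))+\zeta(s)(z-\beta(s))\bigr]=B(z)$ via Fenchel--Moreau and the surjectivity of $\zeta$ --- treats all of these cases at once, with the coercivity in \eqref{hyp:zeta} entering only through the surjectivity of $\zeta$ (and through your explicit limit $s\to\pm\infty$ when $z\notin\overline{R_\beta}$, which in fact is already subsumed by the duality identity). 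Two small points to tighten, neither a gap: the formula $\partial \Bn(\betan(s))=[\zetan(s^-),\zetan(s^+)]$ is only valid at interior points of $\overline{R_{\betan}}$ (at an attained endpoint the subdifferential is a half-line), but the containment $\zetan(s)\in\partial \Bn(\betan(s))$ that you actually use holds in every case by checking the subgradient inequality directly from \eqref{def:B} and the monotonicity of $\zetan\circ\betan^r$; and one should note that Fenchel--Moreau applies because $B$ is proper ($B(0)=0$), convex and lsc by (\ref{convBn:it1}), and that it also covers the case where $z$ is a non-attained endpoint of $R_\beta$, which your explicit discussion of $z\notin\overline{R_\beta}$ does not mention. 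Your approach buys a shorter, structure-revealing proof of (\ref{convBn:it3}) (it is exactly the statement that the convex functions $B_n$ epi-converge along the graphs of $\betan$); the paper's case analysis buys a self-contained argument that never invokes conjugacy and stays entirely within the elementary toolkit it uses elsewhere.
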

\begin{proof}
(\ref{convBn:it1}) The convexity has already been noted. Since $B$ and $B_n$
are continuous on $\overline{R_\beta}$ and $\overline{R_{\betan}}$
respectively by Lemma \ref{lem:techineq},
their extension by $+\infty$ outside their initial domain ensures their
lower semi-continuity.

(\ref{convBn:it2}) Let $M>0$. By \eqref{eq:relBbeta} applied to $B_n$, $B_n(\betan)$ is Lipschitz continuous on $[-M,M]$ with
Lipschitz constant $L_\beta\sup_{|s|\le M}|\zetan(s)|$. This quantity is bounded 
with respect to $n$ since $(\zetan)_{n\in\NN}$
converges uniformly on $[-M,M]$. Hence, the local uniform convergence of $(B_n(\betan))_{n\in\NN}$ follows
from the Arzel\`a--Ascoli theorem if we can prove that $B_n(\betan)\to B(\beta)$ pointwise.
The reasoning in Remark \ref{rem:nu} shows that $\betan'\weakto\beta'$ weak-$\ast$ in $L^\infty(\RR)$.
Hence, for any $s\in\RR$, since $\zetan\to\zeta$ uniformly on $[0,s]$,
\begin{equation*}
B_n(\betan(s))=\int_0^s \zetan(q)\betan'(q)\ud q \to 
\int_0^s \zeta(q)\beta'(q)\ud q=B(\beta(s))\mbox{ as $n\to\infty$},
\end{equation*}
and the proof of (\ref{convBn:it2}) is complete.

(\ref{convBn:it3}) Without loss of generality, we can assume that $(B_n(z_n))_{n\in\NN}$
converges in $[0,\infty]$, otherwise we extract a subsequence that converges to
the inferior limit. We study four distinct cases.

\emph{Case A: $z_n\not\in \overline{R_{\betan}}$ for an infinite number of $n$.}
Then the corresponding $B_n(z_n)$ are equal to $+\infty$ and therefore
$\lim_{n\to\infty} B_n(z_n)=+\infty\ge B(z)$.

\emph{Case B: $z_n\in \overline{R_{\betan}}$ for $n$ large, and $z\not\in \overline{R_\beta}$.}
Assume that $z>\sup R_\beta$ (the case $z<\inf R_\beta$ is similar). Take
$Z\in (\sup R_\beta,z)\subset (0,\infty)$. For $n$ sufficiently large,
$z_n>Z$ and $z_n\in \overline{R_{\betan}}$. Then use the definition \eqref{def:B}
of $B_n$, Hypothesis \eqref{hyp:zeta} and the fact that $\betan^r$ is nondecreasing
to see that
\begin{multline}\label{eq:nicer}
B_n(z_n)= \int_0^{z_n}\zetan(\betan^r(s))\ud s\ge
\int_Z^{z_n}\zetan(\betan^r(s))\ud s\\
\ge \int_Z^{z_n} (M_1\betan^r(s)-M_2)\ud s
\ge (z_n-Z)(M_1\betan^r(Z)-M_2).
\end{multline}
We prove by contradiction that $(\betan^r(Z))_{n\in\NN}$ is not bounded.
Otherwise, upon extraction of a subsequence it converges to some $m\in\RR$. Then,
by local uniform convergence of $\betan$, $Z=\betan(\betan^r(Z))
\to \beta(m)\in R_{\beta}$. But $Z>\sup R_\beta$, which is a contradiction.
Hence, $\betan^r(Z)\to+\infty$ as $n\to\infty$. Since $z_n-Z\to z-Z>0$,
passing to the limit in \eqref{eq:nicer} gives $\lim_{n\to\infty} B_n(z_n)=+\infty\ge B(z)$.

\emph{Case C: $z_n\in \overline{R_{\betan}}$ for $n$ large, $z\in \overline{R_\beta}$
and $(\betan^r(z_n))_{n\in\NN}$ is bounded in $\RR$.}
Let $s_n=\betan^r(z_n)$, which gives $z_n=\betan(s_n)$.
Since $(s_n)_{n\in\NN}$ is bounded, up to extraction of a subsequence we have $s_n\to s\in\RR$ and thus, 
by (\ref{convBn:it2}), $B_n(z_n)=B_n\circ \betan(s_n)
\to B\circ\beta(s)$. The local uniform convergence of $(\betan)_{n\in\NN}$
gives $z_n=\betan(s_n)\to \beta(s)$, which means that $\beta(s)=z$. Hence
$B_n(z_n)\to B(\beta(s))=B(z)$ and the proof is complete.

\emph{Case D: $z_n\in \overline{R_{\betan}}$ for $n$ large, $z\in \overline{R_\beta}$
and $(\betan^r(z_n))_{n\in\NN}$ is unbounded.}
Again, let $s_n=\betan^r(z_n)\in [-\infty,+\infty]$. The function $B_n$ is continuous 
(with values in $[0,+\infty]$) at the endpoints of $R_{\betan}$. Since these endpoints correspond to $\lim_{s\to \pm\infty}
\betan(s)$, applying the monotone convergence theorem to \eqref{eq:relBbeta} then
shows that this formula also holds if $s=\pm\infty$. Hence, for any $n$,
\begin{equation*}
B_n(z_n)=B_n(\betan(s_n))=\int_0^{s_n}\zetan(q)\betan'(q)\ud q.
\end{equation*}
The sequence $(s_n)_{n\in\NN}$ contains a subsequence that goes to $\pm\infty$.
Say, without explicitly denoting the subsequence, that $s_n \to +\infty$ 
(the case $s_n\to -\infty$ is similar). Let $M\ge 0$
and for $n$ sufficiently large, since $\zetan\ge 0$ on $\RR^+$ and $\betan'\ge 0$,
write
\begin{equation*}
B_n(z_n)=\int_0^{s_n}\zetan(q)\betan'(q)\ud q\ge \int_\RR \ch_{[0,M]}(q) \zetan(q)\betan'(q)\ud q.
\end{equation*}
By the reasoning in Remark \ref{rem:nu}, $\betan'\to \beta'$ in $L^\infty(\RR)$ weak-$\ast$.
Since $\zetan\to \zeta$ uniformly on $[0,M]$, we can conclude that
\begin{equation*}
\lim_{n\to\infty}B_n(z_n)\ge \int_\RR \ch_{[0,M]}(q)\zeta(q)\beta'(q)\ud q.
\end{equation*}
Take the limit inferior as $M\to\infty$ using Fatou's lemma to deduce that
\begin{equation*}
\lim_{n\to\infty}B_n(z_n)\ge \int_0^\infty\zeta(q)\beta'(q)\ud q.
\end{equation*}
Since $z\ge 0$ (because for $n$ large enough, each $z_n=\betan(s_n)$ is nonnegative),
$s=\beta^r(z)\in [0,\infty]$ and thus
\begin{equation*}
\lim_{n\to\infty}B_n(z_n)\ge \int_0^s\zeta(q)\beta'(q)\ud q.
\end{equation*}
We already saw that \eqref{eq:relBbeta} is valid for any $s\in [-\infty,\infty]$,
and we infer that $\lim_{n\to\infty}B_n(z_n)\ge B(\beta(s))=B(z)$ as required. 
\end{proof}

% --- ESTIMATES ---
\subsection{Estimates} \label{ssec:est}

The results of the previous section enable energy estimates,
the subject of our next lemma. Note that none of the estimates we prove in this
section require Hypothesis \eqref{hyp:pbeta}.

% ENERGY, TIME DERIVATIVE ESTIMATES

\begin{lemma}\label{lem:energyest}
Let $(\betan,\zetan,\nun,\an,\fn,\uinin)_{n\in\NN}$ be a sequence of data that
satisfies the hypotheses of Theorem \ref{th:main}, and let $\un$ be a solution
to \eqref{eq:perturbed}.
Then there exists $\ctel{energy}>0$ independent of $n$
%depending only upon $p$, $L_{\beta}$,
%$L_{\zeta}$, $\O$, $\underline{a}$,
%$\norm{\uinin}_{L^{2}(\O)}$ and $\norm{\fn}_{\LSob{p'}{-1}{p'}}$ 
such that
the following quantities are bounded above by $\cter{energy}$:
\begin{equation}\label{est:energy}
	\begin{aligned}
	&\sup_{t\in [0,T]}\norm{\Bn(\ol{\betan(\un)}(\cdot,t))}_{L^1(\O)},
	&\norm{\zetan(\un)}_{\LSobo{p}{1}{p}},\\
	&\sup_{t\in [0,T]}\norm{\ol{\betan(\un)}(\cdot,t)}_{L^2(\O)},
	&\norm{\partial_{t}\betan(\un)}_{\LSob{p'}{-1}{p'}}.	
	\end{aligned}
\end{equation}
\end{lemma}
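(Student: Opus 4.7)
The plan is to test \eqref{eq:perturbed} with $\zetan(\un)$ and exploit the energy identity of Lemma \ref{lem:integparts}, which applies to $\un$ since $\un$ solves the same type of problem as \eqref{eq:model} (with $n$-subscripts). For any $T_0\in[0,T]$, the identity \eqref{eq:energy} applied to $\un$ reads
\begin{equation*}
\int_{\O}\Bn(\ol{\betan(\un)}(x,T_0))\ud x + \int_{0}^{T_0}\!\!\int_{\O}\an(x,\nun(\un),\nabla\zetan(\un))\cdot\nabla\zetan(\un)\ud x\ud t = \int_{\O}\Bn(\betan(\uinin))\ud x + \int_{0}^{T_0}\langle \fn,\zetan(\un)\rangle\ud t.
\end{equation*}
First I would bound the diffusion term from below using coercivity \eqref{hyp:acoerc} by $\underline{a}\,\|\nabla\zetan(\un)\|_{L^p(\O\times(0,T_0))}^p$. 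The source term is controlled by the duality pairing and Young's inequality:
$$\left|\int_{0}^{T_0}\langle\fn,\zetan(\un)\rangle\ud t\right|\leq \|\fn\|_{\LSob{p'}{-1}{p'}}\|\zetan(\un)\|_{\LSobo{p}{1}{p}}\leq \epsilon\|\nabla\zetan(\un)\|_{L^p(\O\times(0,T))}^p + C_\epsilon\|\fn\|_{\LSob{p'}{-1}{p'}}^{p'},$$
using the Poincar\'e inequality, whose constant depends only on $|\O|$ and $p$. Choosing $\epsilon$ small enough absorbs this contribution into the left-hand side.

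Next, I would bound the initial data term using \eqref{eq:growthB}: $\Bn(\betan(\uinin))\leq K_3\,\uinin^2$, where $K_3$ depends only on $L_\beta,L_\zeta,M_1,M_2$, hence is independent of $n$. The uniform $L^2$-bound on $\uinin$ follows from the convergence $\uinin\to\uini$ in $L^2(\O)$; likewise $\|\fn\|_{\LSob{p'}{-1}{p'}}$ is uniformly bounded by hypothesis. Taking the supremum over $T_0\in[0,T]$ in the resulting inequality produces the uniform control of $\sup_t\|\Bn(\ol{\betan(\un)}(\cdot,t))\|_{L^1(\O)}$ and of $\|\zetan(\un)\|_{\LSobo{p}{1}{p}}$. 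The lower bound in \eqref{eq:growthB}, namely $K_1\betan(\un)^2-K_2\leq \Bn(\betan(\un))$, then yields the uniform $L^\infty(0,T;L^2(\O))$-bound on $\ol{\betan(\un)}$.

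Finally, to control $\partial_t\betan(\un)$, I would read the equation \eqref{eq:perturbed} as $\partial_t\betan(\un) = \dive[\an(x,\nun(\un),\nabla\zetan(\un))]+\fn$ in the distributional sense. The growth hypothesis \eqref{hyp:agrowth} gives the pointwise bound $|\an(x,\nun(\un),\nabla\zetan(\un))|\leq \ol{a}(x)+\mu|\nabla\zetan(\un)|^{p-1}$, whose $L^{p'}(\O\times(0,T))$-norm is uniformly bounded thanks to the previous bound on $\nabla\zetan(\un)$ and the fact that $\ol{a}\in L^{p'}(\O)$ is fixed. Hence $\dive[\an(\cdot,\nun(\un),\nabla\zetan(\un))]$ is bounded in $\LSob{p'}{-1}{p'}$, and combined with the uniform bound on $\fn$ in the same space this delivers the final estimate.

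The main obstacle is of bookkeeping character rather than conceptual: it is to verify that every constant produced along the way (the $K_i$ from Lemma \ref{lem:techineq}, the Poincar\'e constant, and the absorption threshold in Young's inequality) depends only on data that is assumed to be independent of $n$, so that the estimates are genuinely uniform in $n$.
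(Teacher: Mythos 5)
Your proposal is correct and follows essentially the same route as the paper: the energy identity \eqref{eq:energy} for the perturbed data, coercivity plus Young's inequality to absorb the source term, the growth bounds \eqref{eq:growthB} for the initial term and the $L^\infty(0,T;L^2(\O))$ bound on $\ol{\betan(\un)}$, and the equation itself with \eqref{hyp:agrowth} for $\partial_t\betan(\un)$. The only (minor) point to make explicit is the order of operations in the absorption: since your Young term involves the gradient norm over all of $(0,T)$ while the left-hand side only carries $\int_0^{T_0}$, one first takes $T_0=T$ to absorb and obtain the gradient bound, and then re-injects that bound for arbitrary $T_0$ — exactly as in the paper.
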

\begin{proof}
By hypothesis, $(\betan,\zetan,\nun,\an,\fn,\uinin)$ (for every $n\in\NN$)
satisfies an identity analogous to \eqref{eq:energy}. From this identity, 
the quadratic growth \eqref{eq:growthB} of $\Bn\circ\betan$, 
the uniform coercivity of $(a_n)_{n\in\NN}$ and Young's inequality,
\begin{multline}\label{est:11}
\int_{\O}\Bn(\ol{\betan(\un)}(x,T_{0}))\ud x + \underline{a}\int_{0}^{T_{0}}\int_{\O}|\nabla\zetan(\un)|^{p}\ud x \ud t \\
\leq K_{3}\norm{\uinin}_{L^{2}(\O)}^{2} + \frac{\underline{a}}{2}\norm{\nabla\zetan(\un)}_{L^p(0,T_0;L^p(\O)^d)}^{p}
+ \frac{1}{p'}\left(\frac{2}{\underline{a}p}\right)^{p'/p}\norm{\fn}_{L^{p'}(0,T_0;W^{-1,p'}(\O))}^{p'}.
\end{multline}
Taking $T_0=T$ shows that
\begin{align*}
\norm{\nabla\zetan(\un)}_{\Leb[d]{p}{p}}^{p}
\leq \frac{2}{\underline{a}}
\left( K_{3}\norm{\uinin}_{L^{2}(\O)}^{2} + \frac{1}{p'}\left(\frac{2}{\underline{a}p}\right)^{p'/p}\norm{\fn}_{\LSob{p'}{-1}{p'}}^{p'}\right).
\end{align*}
With the assumed convergence properties of $(\uinin)_{n\in\NN}$ and $(f_n)_{n\in\NN}$,
substituting the previous inequality into \eqref{est:11} gives the first two estimates in \eqref{est:energy}.
The estimate on $(\ol{\betan(\un)})_{n\in\NN}$ follows from 
that on $(\Bn(\ol{\betan(\un)}))_{n\in\NN}$ and \eqref{eq:growthB}. To prove
the estimate on $\partial_t \betan(\un)$, let
$v\in\LSobo{p}{1}{p}$ and deduce from \eqref{eq:solution} that
\begin{multline*}
\left|\int_{0}^{T}\left\langle \partial_{t}\betan(\un)(\cdot, t), v(\cdot, t)\right\rangle_{W^{-1,p'},W^{1,p}_{0}}\ud t\right|\\
\leq \norm{v}_{\LSobo{p}{1}{p}}\left(\norm{\overline{a}}_{L^{p'}(\O)} 
+ \mu\norm{\nabla\zetan(\un)}_{\Leb[d]{p}{p}}^{p-1}
+ \norm{\fn}_{\LSob{p'}{-1}{p'}}\right).
\end{multline*}
Take the supremum over $v$ in the unit ball of $\LSobo{p}{1}{p}$ and use the bound
on $(\zetan(\un))_{n\in\NN}$ in $\LSobo{p}{1}{p}$ to complete the proof.
\end{proof}

% TIME TRANSLATE ESTIMATES ON \NU(U)
The following lemma, applied to $F_n=\zetan$, $G_n=\betan$ and $\un$ the solution to
\eqref{eq:perturbed}, provides us with crucial estimates of the time translates of $\nun(\un)$. 
Nevertheless we state it in a generic setting, as it will also be applied with different functions.

\begin{lemma}\label{lem:nutrans}
For every $n\in\NN$, let $F_n:\RR\to\RR$ and $G_n:\RR\to\RR$ be nondecreasing and
Lipschitz continuous, uniformly with respect to $n$. Suppose also that $F_n(0)=0$. 
Define $H_n(s):=\int_0^s F_n'(q)G_n'(q)\ud q$.
Take $p\geq1$ and $(\un)_{n\in\NN}$ a sequence of measurable functions
on $\O\times(0,T)$ such that $(F_n(u_n))_{n\in\NN}$ is bounded in $L^p(0,T;W^{1,p}_0(\O))$,
$(G_n(u_n))_{n\in\NN}$ is bounded in $L^\infty(0,T;L^2(\O))$ and
$(\partial_t (G_n(u_n)))_{n\in\NN}$ is bounded in $L^{p'}(0,T;W^{-1,p'}(\O))$.
Then there exists $\ctel{ttrans}>0$ independent of $n$ such that, for all $2\le r<\infty$
and all $\tau>0$,
\begin{equation} \label{est:nutrans}
\norm{H_n(\un)(\cdot, \cdot + \tau) - H_n(\un)}_{L^r(\RR;L^2(\O)))}
\leq \cter{ttrans}\tau^{1/r},
\end{equation}
where $H_n(\un)$ is extended by zero outside $\O\times(0,T)$.
\end{lemma}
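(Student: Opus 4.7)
The plan is to reduce the estimate to an Alt--Luckhaus-style duality argument, using the analogue of \eqref{eq:nubetatrans2} for $H_n$, and then interpolate with an $L^\infty(0,T;L^2(\O))$ bound to pass from $L^2$ to $L^r$ in time.

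First, I would establish a uniform bound on $H_n(\un)$ in $L^\infty(0,T;L^2(\O))$. Let $L$ denote a common Lipschitz constant for $(F_n)_{n\in\NN}$ and $(G_n)_{n\in\NN}$. Since $F_n',G_n'\ge 0$, for any $s\in\RR$, $|H_n(s)|=|\int_0^s F_n'(q)G_n'(q)\ud q|\le L|G_n(s)-G_n(0)|$. The uniform $L^\infty(L^2)$ bound on $G_n(\un)$ and the uniform bound on $|G_n(0)|$ (from Lipschitz continuity and $G_n(0)=G_n(0)-F_n(0)\cdot 0$, say applied on a bounded set) then give $\|H_n(\un)\|_{L^\infty(0,T;L^2(\O))}\le \ctel{Hinf}$ independent of $n$.

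Next, I would prove the $L^2$-in-time translate estimate
\begin{equation*}
\int_0^{T-\tau}\|H_n(\un)(\cdot,t+\tau)-H_n(\un)(\cdot,t)\|_{L^2(\O)}^2\ud t\le \ctel{L2tt}\tau.
\end{equation*}
The starting point is the pointwise inequality $[H_n(a)-H_n(b)]^2\le L^2\,(F_n(a)-F_n(b))(G_n(a)-G_n(b))$, proved exactly as \eqref{eq:nubetatrans2} by writing $F_n'G_n'\le (L\,G_n')^{1/2}(L\,F_n')^{1/2}$ under the integral and then applying Cauchy--Schwarz. Substituting $a=\un(x,t+\tau),\ b=\un(x,t)$, integrating over $\O$ and using that $\delta_\tau F_n(\cdot,t):=F_n(\un)(\cdot,t+\tau)-F_n(\un)(\cdot,t)\in W^{1,p}_0(\O)$ while $\delta_\tau G_n(\cdot,t)=\int_t^{t+\tau}\partial_s G_n(\un)(\cdot,s)\ud s$ in the $W^{-1,p'}(\O)$ sense (a standard Bochner FTC once $G_n(\un)\in L^\infty(L^2)$ is viewed in $C([0,T];W^{-1,p'})$), the duality pairing yields
\begin{equation*}
\int_\O \delta_\tau F_n\,\delta_\tau G_n\,\ud x
=\int_t^{t+\tau}\langle\partial_s G_n(\un)(\cdot,s),\delta_\tau F_n(\cdot,t)\rangle_{W^{-1,p'},W^{1,p}_0}\ud s.
\end{equation*}
Integrating in $t\in(0,T-\tau)$, swapping the order of integration by Fubini, and applying H\"older first in $s$ with exponents $(p,p')$ and then in $t$ with the same exponents produces the factor $\tau^{1/p}\tau^{1/p'}=\tau$ multiplied by $\|\delta_\tau F_n\|_{L^p(W^{1,p}_0)}\|\partial_s G_n(\un)\|_{L^{p'}(W^{-1,p'})}$, both of which are bounded uniformly in $n$.

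Finally, I would promote this to $L^r(\RR;L^2(\O))$ using the $L^\infty(L^2)$ bound from Step~1. On $[0,T-\tau]$, $\|\delta_\tau H_n(\cdot,t)\|_{L^2}^r\le (2\cter{Hinf})^{r-2}\|\delta_\tau H_n(\cdot,t)\|_{L^2}^2$, so integration and Step~2 give a contribution of order $\tau$. On the boundary strips $(-\tau,0)$ and $(T-\tau,T)$, where $\delta_\tau H_n$ reduces to a single copy of $H_n(\un)$, the integrand is bounded by $\cter{Hinf}^r$ on an interval of length $\tau$, again contributing $\tau$; elsewhere the translate vanishes. The case $\tau\ge T$ is trivial since the two copies have disjoint supports. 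Taking the $r$-th root yields $\tau^{1/r}$.

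The main obstacle is Step~2: justifying the FTC identity for $\delta_\tau G_n$ in $W^{-1,p'}$ (which implicitly uses $L^2(\O)\hookrightarrow W^{-1,p'}(\O)$, valid under the standing Sobolev range considered in the paper), and carefully producing the factor $\tau^{1}$ by combining the two H\"older applications with Fubini. Once this duality mechanism is set up, the passage from $r=2$ to general $r\ge 2$ is a routine interpolation against the $L^\infty(L^2)$ bound.
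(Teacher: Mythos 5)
Your mechanism is essentially the paper's: the pointwise inequality $[H_n(a)-H_n(b)]^2\le L_FL_G\,(F_n(a)-F_n(b))(G_n(a)-G_n(b))$ (the analogue of \eqref{eq:nubetatrans2}), rewriting the spatial integral as a $W^{-1,p'}$--$W^{1,p}_0$ duality pairing via the fundamental theorem of calculus for $G_n(\un)$, then Fubini plus H\"older (where the paper uses Young's inequality -- the two are equivalent here and your double-H\"older bookkeeping does produce $\tau^{1/p}\tau^{1/p'}=\tau$), and finally boundary strips plus interpolation against the $L^\infty(0,T;L^2(\O))$ bound to go from $r=2$ to general $r$.

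The genuine gap is the step you yourself flag and then dismiss: you justify the duality pairing by invoking $L^2(\O)\hookrightarrow W^{-1,p'}(\O)$, ``valid under the standing Sobolev range considered in the paper''. That embedding (equivalently $W^{1,p}_0(\O)\hookrightarrow L^2(\O)$) requires $p^*\ge 2$, i.e.\ $p\ge\frac{2d}{d+2}$; but the lemma is stated for every $p\ge 1$, case (III) of \eqref{hyp:pbeta} explicitly allows $1<p\le\frac{2d}{d+2}$, and the lemma is applied in exactly that regime (Step 1 of the proof of Theorem \ref{th:main}, with $F_n=\zetan$, $G_n=\betan$). In that range your identity $\int_\O\delta_\tau F_n\,\delta_\tau G_n\,\ud x=\langle\delta_\tau G_n,\delta_\tau F_n\rangle_{W^{-1,p'},W^{1,p}_0}$ collapses: $F_n(\un)(\cdot,t)\in W^{1,p}_0(\O)$ no longer gives $F_n(\un)(\cdot,t)\in L^2(\O)$, so the left-hand side need not even be finite, and $\delta_\tau G_n(\cdot,t)\in L^2(\O)$ cannot be read off as an element of $W^{-1,p'}(\O)$. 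The paper supplies precisely the missing device: truncate, setting $F_n^k=\trunc_k(F_n)$ and $H_n^k(s)=\int_0^s (F_n^k)'(q)G_n'(q)\ud q$, prove the translate estimate for $H_n^k$ (now $F_n^k(\un)\in L^2$), and let $k\to\infty$ using Stampacchia's result ($\nabla F_n^k(\un)=\ch_{\{|F_n(\un)|\le k\}}\nabla F_n(\un)\to\nabla F_n(\un)$ in $L^p$) and dominated convergence for $H_n^k(\un)\to H_n(\un)$ in $L^2$. Your argument is complete for $p\ge\frac{2d}{d+2}$ (the paper itself notes the truncation is redundant for $p\ge 2$), but as written it does not prove the stated lemma. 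A minor additional blemish: your justification of the uniform $L^\infty(0,T;L^2(\O))$ bound on $H_n(\un)$ via ``$G_n(0)=G_n(0)-F_n(0)\cdot 0$'' does not bound $|G_n(0)|$ at all; the bound actually used (here and in the paper) is $|H_n(\un)|\le L_F|G_n(\un)|$, which tacitly relies on $G_n(0)=0$ -- true in every application ($G_n=\betan$) though not literally among the lemma's hypotheses.
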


\begin{proof}
Denote by $L_{F}$ and $L_{G}$ the uniform Lipschitz constants of $(F_n)_{n\in\NN}$
and $(G_n)_{n\in\NN}$, respectively.
We introduce the truncations $\trunc_{k}:\RR\to\RR$ at level $k>0$, defined by
$\trunc_{k}(s):=\max(-k,\min(s,k))$, and the functions
\begin{equation*}
F_n^{k}(s) := \trunc_{k}(F_n(s)) \quad \mbox{and} \quad H_n^{k}(s):=\int_{0}^{s}(F_n^{k})'(q)G_n'(q)\ud q.
\end{equation*}
Then $F_n^{k}(\un), H_n^{k}(\un)\in\Leb{2}{2}$, the latter coming from
\begin{equation}
|H_n^{k}(\un)|\leq  L_F|G_n(\un)|.\label{eq:nudom}
\end{equation}
Now let $\tau\in(0,T)$. Inequality \eqref{eq:nubetatrans2} with $(F_n^k,G_n,H_n^k)$
in place of $(\zeta,\beta,\nu)$ yields
\begin{multline*} 
\int_{0}^{T-\tau}\int_{\O}\left(H_n^{k}(\un)(x,t+\tau) - H_n^{k}(\un)(x,t)\right)^{2}\ud x \ud t \\
\leq L_{F}L_{G}\int_{0}^{T-\tau}\int_{\O}\left(G_n(\un)(x,t+\tau) - G_n(\un)(x,t)\right)
\left(F_n^{k}(\un)(x,t+\tau) - F_n^{k}(\un)(x,t)\right)\ud x \ud t\\
= L_{F}L_{G}\int_{0}^{T-\tau}\left\langle G_n(\un)(\cdot,t+\tau) - G_n(\un)(\cdot,t),
F_n^{k}(\un)(\cdot,t+\tau) - F_n^{k}(\un)(\cdot,t)\right\rangle_{W^{-1,p'},W^{1,p}_{0}}\ud t\\
= L_{F}L_{G}\int_{0}^{T-\tau}\left\langle\int_{t}^{t+\tau}\partial_{t}G_n(\un)(\cdot,s)\ud s,
F_n^{k}(\un)(\cdot,t+\tau) - F_n^{k}(\un)(\cdot,t)\right\rangle_{W^{-1,p'},W^{1,p}_{0}} \ud t\\
= L_{F}L_{G}\int_{0}^{T-\tau}\int_{t}^{t+\tau}
\left\langle \partial_{t}G_n(\un)(\cdot,s), F_n^{k}(\un)(\cdot,t+\tau) - F_n^{k}(\un)(\cdot,t)
\right\rangle_{W^{-1,p'},W^{1,p}_{0}}\ud s \ud t,
\end{multline*}
where the first equality holds since $G_n(\un)(\cdot,t)\in L^{2}(\O)\cap W^{-1,p'}(\O)$
and $F_n^{k}(\un)(\cdot,t)\in L^{2}(\O)\cap W^{1,p}_{0}(\O)$ for a.e. $t\in (0,T)$.
Note that obtaining this $L^2$ integrability of $F_n^k(\un)$ is the
only reason for introducing the truncations; if $p\geq 2$ then the truncations are redundant.
As $k\to\infty$,
 $H_n^{k}(\un)\to H_n(\un)$ almost everywhere on $\O\times(0,T)$ and therefore
also in $\Leb{2}{2}$ by dominated convergence with \eqref{eq:nudom}. 
Thanks to G. Stampacchia's important result \cite{stam65}, we can write
$\nabla F_n^{k}(\un)=\trunc_{k}'(F_n(\un))\nabla F_n(u_n)
=\ch_{\{ |F_n(\un)|\leq k\}}\nabla F_n(\un)$, which converges in $\Leb[d]{p}{p}$ to
$\nabla F_n(\un)$ as $k\to\infty$. So $F_n^{k}(\un)\to F_n(\un)$ in 
$\LSobo{p}{1}{p}$. Let $k\to\infty$ on both sides of the above
inequality to obtain
\begin{multline*}
\int_{0}^{T-\tau}\int_{\O}\left(H_n(\un)(x,t+\tau) - H_n(\un)(x,t)\right)^{2}\ud x \ud t \\
\leq L_{F}L_{G}\int_{0}^{T-\tau}\int_{t}^{t+\tau}
\langle \partial_{t}G_n(\un)(\cdot,s), F_n(\un)(\cdot,t+\tau) - F_n(\un)(\cdot,t)
\rangle_{W^{-1,p'},W^{1,p}_{0}}\ud s \ud t\\
\leq L_{F}L_{G}\int_{0}^{T-\tau}\int_{t}^{t+\tau}
\norm{\partial_{t}G_n(\un)(\cdot,s)}_{W^{-1,p'}(\O)} \norm{F_n(\un)(\cdot,t+\tau) - F_n(\un)(\cdot,t)}_{W^{1,p}_{0}}
\ud s \ud t\\
\end{multline*}
Apply Young's inequality and interchange the order of
integration in $s$ and $t$ where appropriate to obtain
\begin{multline}\label{eq:tobecombined}
\norm{H_n(\un)(\cdot, \cdot + \tau) - H_n(\un)(\cdot, \cdot)}_{L^{2}(\O\times(0, T-\tau))}^{2}\\
\leq L_{F}L_{G}\tau\bigg(\frac{1}{p'}\norm{\partial_{t}G_n(\un)}_{\LSob{p'}{-1}{p'}}^{p'} 
+ \frac{2}{p}\norm{F_n(\un)}_{\LSobo{p}{1}{p}}^{p}\bigg)
\le \cter{cst:tr11}\tau
\end{multline}
where $\ctel{cst:tr11}$ does not depend on $n$ or $\tau$.
From the definition of $H_n$ we have $|H_n(\un)|\le L_F|G_n(\un)|$.
Hence, $(H_n(\un))_{n\in\NN}$ is bounded in $L^\infty(0,T;L^2(\O))$.
This enables us to estimate the time translates on $(0,\tau)$ and $(T-\tau,T)$ and,
combined with \eqref{eq:tobecombined} we deduce that \eqref{est:nutrans} holds for $r=2$.
The conclusion for a generic $r\in [2,\infty)$ follows by interpolation (H\"older's
inequality), using the bound of $(H_n(\un))_{n\in\NN}$ in $L^\infty(0,T;L^2(\O))$.
\end{proof}

% --- CONVEXITY AND MONOTONICITY LEMMAS ---

\subsection{Two lemmas: convexity and monotonicity}\label{ssec:convmono}

Lemma \ref{lem:fatouconv2} is a general result on the
uniform weak lower semi-continuity of sequences of convex functions. 

\begin{lemma}\label{lem:fatouconv2}
Let $\Psi,\Psi_n:\RR\to [0,\infty]$ be convex lower semi-continuous functions
such that for every $n\in\NN$, $\Psi_n(0)=\Psi(0)=0$. Assume that for any $z\in\RR$ 
and any sequence $(z_n)_{n\in\NN}$ converging to $z$, $\Psi(z)\le \liminf_{n\to\infty} \Psi_n(z_n)$.
If $(v_{n})_{n\in\NN}\subset L^{2}(\O)$ converges weakly to $v$ in $L^{2}(\O)$ then
\begin{equation*}
\int_{\O}\Psi(v(x))\ud x \leq \liminf_{n\to\infty}\int_{\O}\Psi_n(\vn(x))\ud x.
\end{equation*}
\end{lemma}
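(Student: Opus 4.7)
The plan is to use Mazur's lemma together with a uniform lower bound on compact sets that follows directly from the Gamma--liminf hypothesis. First, I would reduce to the case $L := \liminf_n \int_\O \Psi_n(v_n)\ud x < \infty$ (otherwise there is nothing to prove) and extract a subsequence (not relabelled) such that $\int_\O \Psi_n(v_n) \to L$.

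The critical preliminary step is the uniform comparison: for every compact $K \subset \mathrm{int}(\mathrm{dom}\,\Psi)$ (where $\Psi$ is continuous by convex lower semi-continuity) and every $\epsilon > 0$, there exists $N = N(K,\epsilon)$ such that $\Psi_n(z) \ge \Psi(z) - \epsilon$ for all $z \in K$ and $n \ge N$. This is proved by contradiction: a violating sequence $z_n \in K$ would, after extracting a subsequence, converge to some $z_\infty \in K$, and the Gamma--liminf hypothesis combined with the continuity of $\Psi$ would give the contradictory chain $\Psi(z_\infty) \le \liminf_n \Psi_n(z_n) \le \liminf_n (\Psi(z_n) - \epsilon) = \Psi(z_\infty) - \epsilon$.

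Next, Mazur's lemma applied to $v_n \weakto v$ in $L^2(\O)$ produces convex combinations $w_k = \sum_{j=k}^{N_k} \lambda_j^k v_j$ (with $\lambda_j^k \ge 0$, $\sum_j \lambda_j^k = 1$) converging to $v$ strongly in $L^2(\O)$, hence almost everywhere after a further extraction. For any truncation level $M > 0$, the convexity of each $\Psi_j$ combined with $\Psi_j(0) = 0$ and $\Psi_j \ge 0$ yields $\Psi_j(\trunc_M y) \le \Psi_j(y)$ for every $y \in \RR$, so combined with the preliminary step applied on $K = [-M,M]$ (with $M$ chosen so that $[-M,M] \subset \mathrm{int}(\mathrm{dom}\,\Psi)$), we obtain for $j \ge N(\epsilon, M)$
\begin{equation*}
\Psi_j(v_j(x)) \ge \Psi(\trunc_M v_j(x)) - \epsilon.
\end{equation*}
Averaging against the Mazur weights and invoking convexity of $\Psi$ gives
\begin{equation*}
\sum_j \lambda_j^k \Psi_j(v_j(x)) \ge \Psi\Bigl(\sum_j \lambda_j^k \trunc_M v_j(x)\Bigr) - \epsilon.
\end{equation*}
Integrating in $x$, applying Fatou's lemma as $k \to \infty$, and then letting $M \to \infty$ and $\epsilon \to 0$ should produce $\int_\O \Psi(v) \le L$, since the left-hand side integrals converge to $L$ (they are convex averages of $\int_\O \Psi_j(v_j) \to L$ with weights supported on $j \ge k$).

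The main obstacle is verifying the a.e.\ convergence $\sum_j \lambda_j^k \trunc_M v_j(x) \to v(x)$ as $k \to \infty$ (at least for $x$ with $|v(x)| < M$): truncation does not commute with convex combinations, so this does not follow immediately from $w_k \to v$ a.e. I expect to overcome this by exploiting the uniform $L^2$ bound on $(v_n)$ inherited from the weak convergence, using Chebyshev to control the measure of $\{|v_j| > M\}$, and showing that the contribution of $\{|v_j| > M\}$ to the convex average is asymptotically negligible, so that $\Psi$ applied to the truncated average has a favourable liminf compared to $\Psi(v(x))$.
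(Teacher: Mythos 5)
Your Mazur-based strategy has a structural problem that goes beyond the obstacle you flag at the end: it needs the truncation level $M$ to tend to $+\infty$ while keeping $[-M,M]\subset\mathrm{int}(\mathrm{dom}\,\Psi)$, i.e.\ it implicitly assumes that $\Psi$ is finite on all of $\RR$. The lemma is stated for $\Psi,\Psi_n:\RR\to[0,\infty]$, and in the paper it is applied with $\Psi=B$, $\Psi_n=B_n$, which are set equal to $+\infty$ outside $\overline{R_\beta}$, $\overline{R_{\betan}}$; these are bounded intervals whenever $\beta$ is bounded (the Richards case), and $0$ can even lie on the boundary of $\mathrm{dom}\,\Psi$ (e.g.\ if $\beta\ge 0$). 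If $\mathrm{dom}\,\Psi$ is bounded you must cap $M$ strictly inside $\mathrm{dom}\,\Psi$, and then the Chebyshev control of the truncation error, $\norm{\trunc_M v_j-v_j}_{L^1(\O)}\le C/M$, never becomes small; your argument then cannot detect the blow-up of $\Psi$ near $\partial(\mathrm{dom}\,\Psi)$ nor the value $+\infty$ outside it, which is precisely the information the inequality must retain. The uniform comparison step also uses continuity of $\Psi$, available only on $\mathrm{int}(\mathrm{dom}\,\Psi)$, so as written the proof covers only real-valued (everywhere finite) $\Psi$ and not the statement actually needed.

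Second, the step you yourself call the main obstacle is a genuine gap, not a formality: Mazur gives $\sum_j\lambda_j^k v_j\to v$ strongly, but nothing forces $\sum_j\lambda_j^k\trunc_M v_j$ to converge a.e., since for a fixed $x$ the weights may concentrate on indices with $|v_j(x)|>M$. It can be repaired when $\mathrm{dom}\,\Psi=\RR$ (for instance, extract a weak-$\ast$ limit $v^M$ of $(\trunc_M v_n)_{n\in\NN}$, apply Mazur to that truncated sequence, use Fatou and lower semi-continuity to get $\int_\O\Psi(v^M)\le L+\eps|\O|$, then send $M\to\infty$ using $\norm{v^M-v}_{L^1(\O)}\le C/M$), but this is exactly the part you have not supplied, and it again hinges on $M\to\infty$. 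By contrast, the paper's proof avoids all of this: it replaces $v_n$ by ball averages $[v_n]_r$, which converge pointwise for every $x$ by weak $L^2$ convergence, applies the hypothesis $\Psi(z)\le\liminf_{n\to\infty}\Psi_n(z_n)$ directly to these real sequences, then uses Jensen's inequality and Fatou's lemma, and finally lets $r\to0$ at Lebesgue points of $v$ using only the lower semi-continuity of $\Psi$; no finiteness or continuity of $\Psi$, and no uniform comparison between $\Psi_n$ and $\Psi$, is ever needed, so the extended values cause no trouble.
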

\begin{proof}
For $x\in\O$, $r>0$ and $n\in\NN$, extend $\vn$ by zero outside $\O$ and write
\begin{equation*}
[\vn]_r(x) := \dashint_{B(x,r)} \vn(y)\ud y=\frac{1}{|B(x,r)|}\int_{B(x,r)}\vn(y)\ud y
\end{equation*}
for the mean value of $\vn$ over the closed ball of radius $r$ centred at $x$. Since
$\vn \weakto v$ in $L^{2}(\O)$ as $n\to\infty$, for every $x\in\O$,
\begin{align*}
[\vn]_r(x) 
\to \dashint_{B(x,r)}v(y)\ud y =: [v]_r(x).
\end{align*}
We have extended $v$ by $0$ outside $\O$. Hence
for every $x\in\O$, $\Psi([v]_r(x))\le\liminf_{n\to\infty}\Psi_n([\vn]_r(x))$.
We can apply Fatou's lemma and Jensen's inequality to obtain
\begin{align*}
\int_{\O}\Psi([v]_r(x))\ud x &\leq
\liminf_{n\to\infty}\int_{\O}\Psi_n([\vn]_r(x))\ud x
\leq \liminf_{n\to\infty}\int_{\O}\dashint_{B(x,r)}\Psi_n(\vn(y))\ud y \ud x.
\end{align*}
Use Fubini-Tonelli and the fact that $\Psi_n(v_n)=0$ outside $\O$
to write
\[
\int_{\O}\dashint_{B(x,r)}\Psi_n(\vn(y))\ud y \ud x 
= \int_{\O}\Psi_n(\vn(y))\ud y.
\]
Thus
\begin{equation}
\int_{\O}\Psi([v]_r(x))\ud x \leq \liminf_{n\to\infty}\int_{\O}\Psi_n(\vn(y))\ud y.\label{eq:almostfatou}
\end{equation}
Almost every $x\in\O$ is a Lebesgue point of $v$ and, for those, we have
$\lim_{r\to 0}[v]_r(x)=v(x)$.
Then from the lower semi-continuity of $\Psi$,
another application of Fatou's lemma and \eqref{eq:almostfatou}, 
we deduce that 
\begin{multline*}
\int_{\O}\Psi(v(x))\ud x \le \int_{\O}\liminf_{r\to 0}\Psi([v]_r(x))\ud x
 \leq \liminf_{r\to 0}\int_{\O}\Psi([v]_r(x))\ud x\\
 \leq \liminf_{r\to 0}\liminf_{n\to\infty}\int_{\O}\Psi_n(\vn(y))\ud y
 = \liminf_{n\to\infty}\int_{\O}\Psi_n(\vn(x))\ud x.\qedhere
\end{multline*} 
\end{proof}

We employ the next result to identify weak nonlinear limits in Section \ref{ssec:step2}. 

% Minty-like lemma
\begin{lemma}\label{lem:mintylike}
Let $V$ be a measurable subset of $\RR^d$. Take sequences 
$(\chin)_{n\in\NN}$, $(\psin)_{n\in\NN}\subset C(\RR)$ of nondecreasing functions
satisfying $\chin(0)=\psin(0)=0$ for every $n\in\NN$ and such that 
$\chin\to\chi$ and $\psin\to\psi$, both pointwise on $\RR$. 
Assume there is a sequence $(\vn)_{n\in\NN}$ of measurable functions on $V$ and two
functions $\chitilde$, $\psitilde\in L^{2}(V)$ such that
\begin{enumerate}[(i)]
\item $\chin(\vn)\weakto\chitilde$ and $\psin(\vn)\weakto\psitilde$, both weakly in $L^{2}(V)$;
\item there exists an almost-everywhere strictly positive function
$\varphi\in L^{\infty}(V)$ such that
\begin{equation*}
\lim_{n\to\infty}\int_{V}\varphi(z)\chin(\vn(z))\psin(\vn(z))\ud z
= \int_{V}\varphi(z)\chitilde(z)\psitilde(z)\ud z.
\end{equation*}
\end{enumerate}
Then for all measurable functions $v$ satisfying $(\chi+\psi)(v) = \chitilde+\psitilde$
almost everywhere in $V$,
\begin{equation*}
\chitilde = \chi(v) \quad\mbox{and}\quad \psitilde = \psi(v) \quad \mbox{almost everywhere in }V.
\end{equation*}
\end{lemma}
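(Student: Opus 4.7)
The plan is a Minty--Browder style monotonicity argument, closed off by a simple algebraic reduction that exploits the linear constraint $(\chi+\psi)(v) = \widetilde\chi+\widetilde\psi$. First I would use that $\chin$ and $\psin$ are both nondecreasing to write the pointwise inequality
\[
(\chin(a) - \chin(b))(\psin(a) - \psin(b)) \geq 0 \quad \text{for all } a,b \in \RR.
\]
For any bounded measurable $\kappa:V\to\RR$, I would then substitute $a = \vn(z)$, $b = \kappa(z)$, multiply by $\varphi(z) \geq 0$, integrate over $V$, and expand to obtain a four-term inequality of the form
\[
\int_V \varphi\, \chin(\vn)\psin(\vn) - \int_V \varphi\, \chin(\kappa)\psin(\vn) - \int_V \varphi\, \chin(\vn)\psin(\kappa) + \int_V \varphi\, \chin(\kappa)\psin(\kappa) \geq 0.
\]

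The heart of the proof is passing to the limit $n\to\infty$ in this inequality. The first term converges by hypothesis (ii). For the cross and last terms I would use that pointwise convergence of the monotone continuous functions $\chin\to\chi$ and $\psin\to\psi$ to a continuous limit is uniform on bounded sets (a standard Dini-type variant), so that $\chin(\kappa)\to\chi(\kappa)$ and $\psin(\kappa)\to\psi(\kappa)$ in $L^\infty(V)$, and hence in $L^2(V)$. These strong convergences pair with the weak $L^2$ convergences from hypothesis (i), yielding
\[
\int_V \varphi\, (\widetilde\chi - \chi(\kappa))(\widetilde\psi - \psi(\kappa))\ud z \geq 0 \quad\text{for every bounded measurable } \kappa.
\]
To upgrade $\kappa$ to $v$ itself, I would first note that the constraint $\chi(v) + \psi(v) = \widetilde\chi + \widetilde\psi \in L^2(V)$, combined with the fact that $\chi(v)$ and $\psi(v)$ share the sign of $v$ (both are nondecreasing and vanish at $0$), forces $|\chi(v)|, |\psi(v)| \leq |\widetilde\chi + \widetilde\psi|$, hence $\chi(v), \psi(v) \in L^2(V)$. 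Applying the inequality to $\kappa = \trunc_k(v)$ and taking $k\to\infty$ by dominated convergence then produces the inequality at $\kappa = v$.

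Finally, and this is the clean payoff, the linear constraint rewrites $\widetilde\psi - \psi(v) = -(\widetilde\chi - \chi(v))$, so the inequality at $\kappa = v$ becomes
\[
-\int_V \varphi\, (\widetilde\chi - \chi(v))^2 \ud z \geq 0.
\]
Since $\varphi > 0$ almost everywhere on $V$, this forces $\widetilde\chi = \chi(v)$ a.e., and then $\widetilde\psi = \psi(v)$ a.e. from the constraint. The main obstacle is the limit passage in the cross terms, where the only-weak convergence of $\psin(\vn)$ must be paired against a strongly convergent factor; this is what forces restricting to bounded $\kappa$ at first and then reaching $\kappa = v$ through truncation. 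The apparently weak scalar hypothesis (ii) — convergence of just one integral — is precisely enough, when combined with monotonicity, to identify both weak limits simultaneously.
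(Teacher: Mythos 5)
Your proof follows essentially the same route as the paper's: the monotonicity inequality integrated against $\varphi$, the passage $n\to\infty$ pairing the weak limits from (i)--(ii) with strongly convergent fixed-argument terms, the truncation $\kappa=\trunc_k(v)$ with $k\to\infty$ justified by $|\chi(\trunc_k(v))|\le |\chi(v)|\in L^2(V)$, and the same algebraic finish via the constraint $\chi(v)+\psi(v)=\chitilde+\psitilde$. The one step to adjust is your Dini-type claim that $\chin(\kappa)\to\chi(\kappa)$ in $L^\infty(V)$: locally uniform convergence of nondecreasing functions requires continuity of the limit, and the lemma only assumes pointwise convergence, so $\chi$ and $\psi$ may have jumps (e.g.\ $\chin(s)=\trunc_1(ns)$ converges pointwise but not locally uniformly). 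The paper instead notes that $\chin,\psin$ are bounded on $[-k,k]$ uniformly in $n$ and applies dominated convergence to get $\chin(\trunc_k(v))\to\chi(\trunc_k(v))$ and $\psin(\trunc_k(v))\to\psi(\trunc_k(v))$ in $L^2(V)$, which is all that is needed against the weak limits; with that one-line substitution your argument is complete.
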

\begin{proof}
Observe that $\chi(v), \psi(v)\in L^{2}(V)$ since by hypothesis they have the same sign, 
so that $|\chi(v)| + |\psi(v)| = |(\chi+\psi)(v)|=|\chitilde + \psitilde|\in L^{2}(V)$. 
Let $\trunc_k(s)=\min(k,\max(-k,s))$ be the truncation at level $k$.
Since $\chin$ and $\psin$ are nondecreasing, for the function $\varphi$ in (ii), write
\begin{equation}\label{mintylike0}
\int_{V}\varphi(z)\big[\chin(\vn(z)) - \chin(\trunc_k(v(z)))\big]
\big[\psin(\vn(z)) - \psin(\trunc_k(v(z)))\big]\ud z \geq 0.
\end{equation}
By their monotonicity and sign properties, the functions $\chin$ and $\psin$ are bounded on
$[-k,k]$ by $\max(|\chi_n(\pm k)|,|\psi_n(\pm k)|)$, which is
uniformly bounded with respect to $n$ since $\chin$ and $\psin$ converge
pointwise. Hence, by the dominated convergence theorem, $\chin(\trunc_k(v))\to \chi(\trunc_k(v))$
and $\psin(\trunc_k(v))\to \psi(\trunc_k(v))$ in $L^2(V)$ as $n\to\infty$.
Using (i) and (ii), we can therefore pass to the limit $n\to\infty$
in \eqref{mintylike0} and we find
\begin{equation}
\int_{V}\varphi(z)\big[\chitilde(z) - \chi(\trunc_k(v(z)))\big]
\big[\psitilde(z) - \psi(\trunc_k(v(z)))\big]\ud z \geq 0.\label{eq:mintylike1}
\end{equation}
The monotonicity and sign properties of $\chi$ and $\psi$ ensure that
$|\chi(\trunc_k(v))|\le |\chi(v)|$ and $|\psi(\trunc_k(v))|\le |\psi(v)|$.
Since $\chi(v)$ and $\psi(v)$ belong to $L^2(V)$, we deduce
that as $k\to\infty$, $\chi(\trunc_k(v))\to \chi(v)$ and $\psi(\trunc_k(v))\to \psi(v)$,
both in $L^2(V)$. Passing to the limit in \eqref{eq:mintylike1}, we obtain
\begin{equation}
\int_{V}\varphi(z)[\chitilde(z) - \chi(v(z))][\psitilde(z) - \psi(v(z))]\ud z \geq 0.\label{eq:mintylike}
\end{equation}
The identity $\chi(v) + \psi(v) = \chitilde + \psitilde$ gives
$\chitilde(z) - \chi(v(z)) = -(\psitilde(z) - \psi(v(z)))$,
which after substitution into \eqref{eq:mintylike} yields
\begin{equation*}
-\int_{V}\varphi(z)\left[\chitilde(z) - \chi(v(z))\right]^{2}\ud z  = -\int_{V}\varphi(z)\left[\psitilde(z) - \psi(v(z))\right]^{2}\ud z \geq 0.
\end{equation*}
From the positivity of $\varphi$ we conclude that
$\chitilde(z) = \chi(v(z))$ and $\psitilde(z) = \psi(v(z))$ for
almost every $z\in V$.
\end{proof}

% --- UNIFORM TEMPORAL COMPACTNESS ---
\subsection{Uniform-temporal, weak-spatial compactness} \label{ssec:unicomp}

The classical Aubin--Simon compactness theorem --- an amalgamation of the work
of J.-P. Aubin \cite{aub63} and J. Simon \cite{sim87} --- does ensure uniform temporal
compactness in Lebesgue spaces (for the norm topology), provided that a spatial compactness estimate
is available in such spaces. This usually requires control of the gradients in Lebesgue
spaces. Since we lack such estimates on the gradient of $\beta(u)$, we must forfeit (at least
initially) strong compactness in the spatial variable.
We first recall a basic definition.
% uniform-in-time weak L^2 convergence definition 
\begin{definition}\label{def:weakunifconv}
A sequence of continuous functions $v_{n}:[0,T]\to L^{2}(\O)\weak$ converges in the space
$C([0, T]; L^{2}(\O)\weak)$ to a function $v: [0,T] \to L^{2}(\O)$ if for all $\varphi\in L^{2}(\O)$,
the sequence of functions $[0,T]\ni t \mapsto \langle v_{n}(t), \varphi\rangle_{L^{2}(\O)}$
converges uniformly on $[0,T]$ to $[0,T]\ni t \mapsto \langle v(t), \varphi\rangle_{L^{2}(\O)}$
as $n\to\infty$.

Note that $v$ is then necessarily an element of $C([0,T];L^2(\O)\weak)$.
\end{definition}

% uniform-in-time L^2 weak compactness result
\begin{proposition}\label{prop:unifweakcomp}
Let $(v_{n})_{n\in\NN}$ be a sequence of real-valued measurable functions
on $\O\times(0,T)$. Suppose that there exists $q>1$ and $R>0$ such that for every 
$n\in\NN$,
\begin{equation} \label{eq:estvn}
\sup_{t\in [0,T]}\norm{v_{n}(\cdot,t)}_{L^2(\O)}\leq R, \quad \norm{\partial_{t}v_{n}}_{\LSob{q}{-1}{1}}\leq R.
\end{equation}
Then $(v_{n})_{n\in\NN}$ is relatively compact in $C([0,T]; L^{2}(\O)\weak)$;
that is, there is a subsequence of $(v_{n})_{n\in\NN}$ that converges in the sense
of Definition \ref{def:weakunifconv}.
\end{proposition}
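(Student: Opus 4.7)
The plan is to combine the pointwise-in-time weak $L^2$ compactness (from the uniform $L^2$ bound) with equi-continuity of scalar pairings $t\mapsto \langle v_n(t),\varphi\rangle_{L^2}$ (from the derivative bound), then use an Arzelà--Ascoli argument along a countable dense family of test functions and extend to all of $L^2(\O)$ by density.

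First I would notice that on the bounded set $\O$, the embedding $W^{1,\infty}_0(\O)\hookrightarrow L^2(\O)$ is continuous, so every $\varphi\in C^\infty_c(\O)\subset W^{1,\infty}_0(\O)$ acts both as a test function for $\partial_t v_n(t)\in W^{-1,1}(\O)$ and as an $L^2$-pairing. Given such a $\varphi$, the time-derivative estimate together with Hölder's inequality give, for $0\le t_1<t_2\le T$,
\[
|\langle v_n(t_2)-v_n(t_1),\varphi\rangle_{L^2}|=\left|\int_{t_1}^{t_2}\langle \partial_t v_n(s),\varphi\rangle\,\mathrm{d}s\right|
\le R\,\|\varphi\|_{W^{1,\infty}_0}\,|t_2-t_1|^{1/q'},
\]
while the first bound in \eqref{eq:estvn} yields $|\langle v_n(t),\varphi\rangle_{L^2}|\le R\|\varphi\|_{L^2}$. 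Hence for each fixed such $\varphi$ the family $\{t\mapsto \langle v_n(t),\varphi\rangle\}_{n\in\NN}$ is equi-bounded and equi-Hölder-continuous in $C([0,T])$.

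Second, I would fix a countable set $\mathcal D\subset C^\infty_c(\O)$ that is dense in $L^2(\O)$. Arzelà--Ascoli applied to each $\varphi\in \mathcal D$, combined with a diagonal extraction, yields a subsequence (still denoted $(v_n)_{n\in\NN}$) such that for every $\varphi\in\mathcal D$, $\langle v_n(\cdot),\varphi\rangle$ converges uniformly on $[0,T]$ to some continuous function $L_\varphi$. To upgrade this to all $\varphi\in L^2(\O)$, pick $\eps>0$ and $\tilde\varphi\in\mathcal D$ with $\|\varphi-\tilde\varphi\|_{L^2}<\eps$. The uniform bound on $\|v_n(t)\|_{L^2}$ gives
\[
\sup_{n,t}|\langle v_n(t),\varphi-\tilde\varphi\rangle_{L^2}|\le R\eps,
\]
which transfers the uniform Cauchy property from $\tilde\varphi$ to $\varphi$. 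Thus $\langle v_n(\cdot),\varphi\rangle$ converges uniformly on $[0,T]$ for every $\varphi\in L^2(\O)$.

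Finally I would define $v(t)\in L^2(\O)$ via the limit: the map $\varphi\mapsto \lim_n \langle v_n(t),\varphi\rangle$ is a linear functional of norm $\le R$ on $L^2(\O)$, hence by Riesz represents an $L^2$ element. The uniform limit in $\varphi$ ensures that $t\mapsto\langle v(t),\varphi\rangle$ is continuous for every $\varphi\in L^2(\O)$, i.e.\ $v\in C([0,T];L^2(\O)\weak)$, and $v_n\to v$ in the sense of Definition \ref{def:weakunifconv}. The only mild subtlety—and the one place one must pay attention—is the mismatch between the dual pairing $W^{-1,1}$--$W^{1,\infty}_0$ used to control time increments and the $L^2$--$L^2$ pairing used in the target topology; the embeddings $C^\infty_c(\O)\subset W^{1,\infty}_0(\O)\cap L^2(\O)$ and the density of $C^\infty_c(\O)$ in $L^2(\O)$ bridge this gap cleanly.
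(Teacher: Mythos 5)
Your proof is correct and follows essentially the same route as the paper: the same two estimates (the uniform $L^2$ bound for compactness of values, and the $W^{-1,1}$--$W^{1,\infty}_0$ pairing of $\partial_t v_n$ against smooth test functions for equi-continuity in time) drive an Arzelà--Ascoli argument over a countable dense family of $C^\infty_c(\O)$ functions, extended to all of $L^2(\O)$ by density. The only difference is packaging: the paper metrizes the weak topology on the ball of radius $R$ via the dense sequence $(\varphi_l)$ and applies the Arzelà--Ascoli theorem once for functions valued in that compact metric space, whereas you carry out the scalar Arzelà--Ascoli, diagonal extraction, density upgrade and Riesz representation by hand.
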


\begin{remark}
The space $W^{-1,1}(\O)$ has been chosen by convenience, but it
could be replaced with the dual space of any Banach space in which
$C^\infty_c(\O)$ is dense.
\end{remark}
\begin{proof}
Denote by $E$ the ball of radius $R$ in $L^{2}(\O)$, endowed with the weak topology.
Take $(\varphi_{l})_{l\in\NN}\subset C^{\infty}_{c}(\O)$ a dense sequence in 
$L^{2}(\O)$ and equip $E$ with the metric
\begin{equation*}
d_E(v,w)=\sum_{l\in\NN} \frac{\min(1,|\langle v-w,\varphi_l\rangle_{L^2(\O)}|)}{2^l}.
\end{equation*}
The $L^2(\O)$ weak topology on $E$ is the topology induced by this metric. 
The set $E$ is metric compact and therefore complete. The first bound in \eqref{eq:estvn}
ensures that every $v_n$ takes values in $E$. It remains to estimate
$d_{E}(v_{n}(s),v_{n}(s'))$. To this end, 
\begin{align*}
\left| \langle v_{n}(s') - v_{n}(s), \varphi_{l}\rangle_{L^{2}(\O)}\right|
&= \left| \int_{\O}(\vn(x,s')-\vn(x,s))\varphi_{l}(x)\ud x\right|\\
&= \left| \langle \vn(\cdot,s') - \vn(\cdot,s), \varphi_{l}\rangle_{W^{-1,1},W^{1,\infty}_{0}}\right|\\ 
&= \left| \int_{s}^{s'}\langle \partial_{t}v_{n}(\cdot,t),\varphi_{l}\rangle_{W^{-1,1},W^{1,\infty}_{0}}\ud t\right|\\
&\leq \norm{\partial_{t}v_{n}}_{\LSob{q}{-1}{1}}\norm{\mathbf{1}}_{L^{q'}(s,s')}\norm{\varphi_{l}}_{W^{1,\infty}_{0}(\O)}\\
&\leq R|s-s'|^{1/q'}\norm{\varphi_{l}}_{W^{1,\infty}_{0}(\O)}.
\end{align*}
Then
\begin{equation*}
d_{E}(v_{n}(s),v_{n}(s'))\leq 
\sum_{l\in\NN}2^{-l}\min\left(1,R|s-s'|^{1/q'}\norm{\varphi_{l}}_{W^{1,\infty}_{0}(\O)}\right)
=: \omega(s,s').
\end{equation*}
Dominated convergence for series then implies that $\omega(s,s')\to 0$ as 
$|s-s'|\to 0$. Hence, $(\vn)_{n\in\NN}$ belongs to $C([0,T];E)$
and is equi-continuous in that space. Invoking the Arzel\`a--Ascoli theorem 
and the compactness of $E$ in $L^2(\O)\weak$ completes the proof. 
\end{proof}

% --- PROOF OF MAIN RESULT ---

\section{Proof of the main result} \label{sec:proof}

We prove Theorem \ref{th:main} in five steps. In Step 1 we obtain compactness of the 
sequences of interest, and in Step 2 we identify the limits of these sequences.
In Step 3 we pass to the limit in \eqref{eq:solution}. Step 4 improves the temporal convergence
of $(\nun(\un))_{n\in\NN}$ to establish \eqref{eq:mainconv}. We conclude by establishing
the strong convergence \eqref{eq:strgrad} in Step 5. 

% --- STEP 1 ---
\subsection{Step 1: compactness results}\label{ssec:step1}

Apply Proposition \ref{prop:unifweakcomp} using Estimates \eqref{est:energy} on 
$(\betan(\un))_{n\in\NN}$ and $(\partial_{t}\betan(\un))_{n\in\NN}$, and Lemma \ref{lem:h} 
with $\Hn=\betan$, $\vn=\uinin$
to deduce the existence of 
$\betatilde\in C([0,T];L^{2}(\O)\weak)$ satisfying 
$\betatilde(\cdot,0) = \beta(\uini)$ in $L^{2}(\O)$ and such that up to a subsequence,
\begin{equation}\label{conv:weakunifbeta}
\ol{\betan(\un)}\weakto \betatilde \quad \mbox{ in }C([0,T]; L^{2}(\O)\weak). 
\end{equation}
From \eqref{est:energy}, up to a subsequence,
\begin{equation}
\zetan(\un)\weakto \zetatilde \quad \mbox{weakly in $\LSobo{p}{1}{p}$} \label{eq:zetaweak}
\end{equation}
for some function $\zetatilde\in \LSobo{p}{1}{p}$.
Next we obtain strong compactness of the sequence
$(\nun(\un))_{n\in\NN}$ by demonstrating that the translates in space and time vanish.
Recalling \eqref{eq:nubetatrans} and using a classical translate
estimate in $W^{1,p}_0(\O)$, for $\xi\in\RR^d$ and $q<p^*$,
\begin{multline}\label{nun:spacetr}
\norm{\nun(\un)(\cdot + \xi, \cdot) - \nun(\un)}_{L^{p}(0,T;L^{q}(\O))} \leq
L_{\beta}\norm{\zetan(\un)(\cdot+\xi,\cdot) - \zetan(\un)}_{L^{p}(0,T;L^q(\O))} \\
\leq \cter{cst:sptr11}\norm{\nabla \zetan(\un)}_{L^{p}(\O\times (0,T))^d}|\xi|^\theta
\leq \cter{cst:sptr11}\cter{energy}|\xi|^\theta,
\end{multline}
where $\theta>0$ and $\ctel{cst:sptr11}$ do not depend on $\xi$ or $n$,
and $\nun(\un)$ and $\zetan(\un)$ are extended by zero on the complement of $\O$.
But $|\nun(\un)|\le L_\zeta |\betan(\un)|$ and $(\nun(\un))_{n\in\NN}$ is therefore
bounded in $L^\infty(0,T;L^2(\O))$. Interpolated with \eqref{nun:spacetr}, this shows that,
for all $r<+\infty$,
\begin{equation}\label{nun:spacetr2}
\norm{\nun(\un)(\cdot + \xi, \cdot) - \nun(\un)}_{L^r(0,T;L^{\min(2,q)}(\O))} \leq
\cter{cst:sptr111}|\xi|^{\theta_r}
\end{equation}
where $\theta_r>0$ and $\ctel{cst:sptr111}$ do not depend on $\xi$ or $n$.
By the energy estimates \eqref{est:energy}, Lemma \ref{lem:nutrans} applied with $F_n=\zetan$
and $G_n=\betan$ shows that the time translates of $\nun(\un)$ converge
uniformly to zero in $L^r(0,T;L^2(\O))$ for all $r<+\infty$. 
Combined with \eqref{nun:spacetr2} and the Kolmogorov--M.Riesz--Fr\'echet compactness theorem,
this establishes that, up to a subsequence,
\begin{equation}
\nun(\un) \to \nutilde \quad \mbox{in }L^r(0,T;L^{\min(2,q)}(\O))\mbox{ for all
$r<+\infty$ and all $q<p^*$}. \label{eq:nunstrong}
\end{equation}
From the uniform growth of the sequence $(a_{n})_{n\in\NN}$ and \eqref{est:energy},
we assert the existence of $\atilde\in L^{p'}(\O\times(0,T))^d$ such that, up to a
subsequence,
\begin{equation}
a_{n}(\cdot,\nun(\un),\nabla\zetan(\un))\weakto\atilde \quad \mbox{weakly in }L^{p'}(\O\times(0,T))^{d}.\label{eq:anweak}
\end{equation}

% --- STEP 2 ---
\subsection{Step 2: identifying nonlinear weak limits}\label{ssec:step2}

We show that there exists a measurable $u$ such that
$\betatilde=\beta(u)$, $\zetatilde=\zeta(u)$ and $\nutilde=\nu(u)$.
Three separate analyses are required, depending on the case in Hypothesis \eqref{hyp:pbeta}.

\subsubsection{Case {\rm (I)}: $p\ge 2$.}~

Define $\mu = \beta + \zeta$, $\mun = \betan + \zetan$ and $\mutilde = \betatilde +
\zetatilde$. Fix a measurable function $u$ such that $(\mu + \nu)(u) = \mutilde + \nutilde$.
Such a $u$ exists since the hypotheses on $\beta$ and 
$\zeta$ ensure that the range of $\mu + \nu$ is all of $\RR$ and therefore the 
domain of the right inverse $(\mu + \nu)^{r}$ of $(\mu +\nu)$ (defined analogously to \eqref{def:betar})
is $\RR$. One possible choice for $u$ is then
$u=(\mu+\nu)^{r}(\mutilde+\nutilde)$. We now demonstrate that for such a $u$, 
$\betatilde = \beta(u)$, $\zetatilde = \zeta(u)$ and $\nutilde = \nu(u)$.

Using $p\ge 2$, the convergences \eqref{eq:zetaweak} and \eqref{eq:nunstrong}
ensure that $\zetan(\un)\weakto \zetatilde$ weakly in $L^2(\O\times (0,T))$, and that
$\nun(\un) \to \widetilde{\nu}$ strongly in $L^2(\O\times(0,T))$.
We deduce that $\mun(\un)=\betan(\un)+\zetan(\un)\weakto\betatilde+\zetatilde=\mutilde$ weakly
in $L^{2}(\O\times(0,T))$ and that 
\[
\int_{\O\times(0,T)}\mun(\un)(x,t)\nun(\un)(x,t)\ud x\ud t\to
\int_{\O\times(0,T)}\mutilde(x,t)\nutilde(x,t)\ud x\ud t.
\]
We can thus apply Lemma 
\ref{lem:mintylike} with $\varphi\equiv 1$, $\vn=\un$, $v=u$, $\chin=\mun$ and $\psin=\nun$ 
to deduce that $\nutilde=\nu(u)$ and $\mutilde=\mu(u)$ almost everywhere on $\O\times(0,T)$,
the latter of which states that
$(\beta+\zeta)(u) = \betatilde + \zetatilde$. 

Since $p\ge 2$, Estimates \eqref{est:energy} ensure that $(\zetan(\un))_{n\in\NN}$ and
$(\betan(\un))_{n\in\NN}$ satisfy the hypotheses of Lemma \ref{lem:compcomp}, and so
$\betan(\un)\zetan(\un)\weakto\betatilde\,\zetatilde$ in $(C(\overline{\O}\times[0,T]))'$.
Now as $(\beta+\zeta)(u) = \betatilde + \zetatilde$, we apply Lemma \ref{lem:mintylike}
again with $\varphi\equiv 1$, $\vn=\un$, $v=u$, $\chin=\betan$ and $\psin=\zetan$ to conclude that
$\betatilde = \beta(u)$ and $\zetatilde = \zeta(u)$ almost everywhere on $\O\times(0,T)$.

\subsubsection{Case {\rm (II)}: $\frac{2d}{d+2}<p<2$ and $|\betan(s)|\ge M_3|s|-M_4$.}~

Since $(\betan(\un))_{n\in\NN}$ is bounded in $L^\infty(0,T;L^2(\O))$, the assumption
on $\betan$ shows that $(\un)_{n\in\NN}$ is bounded in the same space. By the uniform
Lipschitz continuity of $\zetan$, we infer that $(\zetan(\un))_{n\in\NN}$ is also bounded
in $L^\infty(0,T;L^2(\O))$. Hence, as in the previous case the convergence \eqref{eq:zetaweak}
also holds weakly in $L^2(\O\times (0,T))$.
Since $p^*>2$, \eqref{eq:nunstrong} gives the strong convergence of
$\nun(\un)$ in $L^2(\O\times(0,T))$.

We proceed as in the previous case to see that with $u=(\mu+\nu)^r(\mutilde
+\nutilde)$, $\nu(u)=\nutilde$ and $\beta(u)+\zeta(u)=\betatilde+\zetatilde$.
Now apply Lemma \ref{lem:compcomp} to $(\zetan(\un))_{n\in\NN}$
and $(\betan(\un))_{n\in\NN}$. As in Case (I), this gives 
$\betatilde = \beta(u)$ and $\zetatilde = \zeta(u)$.

\subsubsection{Case {\rm (III)}: $1<p\le \frac{2d}{d+2}$, $|\betan(s)|\ge M_3|s|-M_4$ and
$\beta$ is strictly increasing}~

As in Case (II), the coercivity assumption on $\betan$ ensures that $(\zetan(\un))_{n\in\NN}$
converges weakly in $L^2(\O\times (0,T))$. However, we can no longer ensure
the strong convergence of $\nun(\un)$ in $L^2$.
We must therefore truncate $\zetan$ first. Let $\zetan^k=\trunc_k(\zetan)$,
where $\trunc_k(s)=\min(k,\max(-k,s))$ is the truncation at level $k$.
Up to a subsequence, for some $\zetatilde^k\in L^2(\O\times(0,T))$, $\zetan^k(\un)\weakto \zetatilde^k$
weakly in $L^2(\O\times (0,T))$. Set
\begin{equation*}
\nun^k(s)=\int_0^s \betan'(q)(\zetan^k)'(q)\ud q.
\end{equation*}
Note that $(\nabla\zetan^k(\un))_{n\in\NN}=(\ch_{\{|\zetan(\un)|\le k\}}\nabla\zetan(\un))_{n\in\NN}$ is bounded in $L^p(\O\times (0,T))^d$. Hence, following the reasoning in
\eqref{nun:spacetr} and using an interpolation in space between $p$ and $\infty$ (we have
$|\zetan^k|\le k$),
\begin{align*}
\norm{\nun^k(\un)(\cdot + \xi, \cdot) - \nun^k(\un)}_{L^{p}(0,T;L^2(\O))} &\leq
L_{\beta}\norm{\zetan^k(\un)(\cdot+\xi,\cdot) - \zetan^k(\un)}_{L^{p}(0,T;L^2(\O))} \\
&\leq L_{\beta}(2k)^{1-\frac{p}{2}}\norm{\zetan^k(\un)(\cdot+\xi,\cdot) - \zetan^k(\un)}_{L^{p}(0,T;L^p(\O))}^{\frac{p}{2}} \\
&\leq \cter{cst:sptr12}\norm{\nabla \zetan^k(\un)}_{L^{p}(\O\times (0,T))^d}^{\frac{p}{2}}|\xi|^{\frac{p}{2}}
\leq \cter{cst:sptr13}|\xi|^{\frac{p}{2}},
\end{align*}
where $\ctel{cst:sptr12}$ and $\ctel{cst:sptr13}$ depend on $k$ but not on $n$ or $\xi$.
Use the bound on $(\nun^k(\un))_{n\in\NN}$ in $L^\infty(0,T;L^2(\O))$ to infer
that the space translates of these functions vanish uniformly with respect to $n$
in $L^r(0,T;L^2(\O))$ for all $r<+\infty$.
Lemma \ref{lem:nutrans} applied to $F_n=\zetan^k$ and $G_n=\betan$ shows that 
the time translates of $\nun^k(\un)$ vanish uniformly with respect to $n$
in $L^r(0,T;L^2(\O))$ for all $r<+\infty$. Hence, $(\nun^k(\un))_{n\in\NN}$ 
strongly converges, up to a subsequence, to some $\nutilde^k$ in $L^2(\O\times (0,T))$.

We can then work as in the previous cases with $\betan$, $\zetan^k$ and $\nun^k$.
We define $\zeta^k=\trunc_k(\zeta)$ and $\nu^k(s)=\int_0^s \beta'(q)(\zeta^k)'(q)\ud q$, and we
let $\mu^k=\beta+\zeta^k$.
By coercivity of $\beta$, the mapping $\mu^k+\nu^k$ is onto and we can
define $u^k=(\mu^k+\nu^k)^r(\mutilde^k+\nutilde^k)$, where $\mutilde^k=\betatilde
+\zetatilde^k$ is the weak limit in $L^2(\O\times (0,T))$ of $\betan+\zetan^k$.
By strong convergence in $L^2(\O\times(0,T))$ of $(\nun^k(\un))_{n\in\NN}$ we can apply Lemma 
\ref{lem:mintylike} to see that $\nutilde^k=\nu^k(u^k)$ and
\begin{equation}\label{rel:k}
\betatilde+\zetatilde^k=\beta(u^k)+\zeta^k(u^k).
\end{equation}

We now apply Lemma \ref{lem:compcomp}
to $(\zetan^k(\un))_{n\in\NN}$ and $(\betan(\un))_{n\in\NN}$.
Indeed, $(\zetan^k(\un))_{n\in\NN}$ is bounded in $L^\infty(\O\times(0,T))$. We therefore obtain $\betan(\un)\zetan^k(\un)
\weakto \betatilde\,\zetatilde^k$ weakly in $C(\overline{\O}\times [0,T])'$.
Use Lemma \ref{lem:mintylike} and \eqref{rel:k}
to deduce that $\zetatilde^k=\zeta^k(u^k)$ and $\betatilde=\beta(u^k)$.

Since $\beta$ does not have any plateaux and $\betatilde$ does not depend on $k$,
the latter relation shows that $u^k$ does not depend on $k$.
Write $u=u^k$. Then $\betatilde=\beta(u)$, $\zetatilde^k=\trunc_k(\zeta(u))$
and $\nutilde^k=\nu^k(u)$. If we can show that $\zetatilde^k\to \zetatilde$
and $\nutilde^k\to \nutilde$ in $\mathcal D'(\O\times (0,T))$ as $k\to\infty$, then we can
pass to the limit in the previous equalities to get
$\zetatilde=\zeta(u)$ and $\nutilde=\nu(u)$, as required.

Begin with the convergence of $\zetatilde^k$. This function is the weak
limit in $L^2(\O\times (0,T))$ of $(\zetan^k(\un))_{n\in\NN}$. By Tchebycheff's inequality,
uniform Lipschitz continuity of $\zetan^k$ and the bound of
$(\un)_{n\in\NN}$ in $L^\infty(0,T;L^2(\O))$ (from the coercivity of $\betan$),
\begin{equation}\label{est:meas}
\mbox{meas}(\{|\zetan^k(\un)|\ge k\})\le \frac{\cter{cst:tcheb}}{k}
\end{equation}
with $\ctel{cst:tcheb}$ not depending on $k$ or $n$.
Let $\varphi\in C^\infty_c(\O\times (0,T))$. Then
\begin{multline}
\left|\int_{\O\times (0,T)}[\zetatilde^k-\zetatilde(x,t)]\varphi(x,t)\ud x\ud t\right|\\
\leq
\left|\int_{\O\times (0,T)}[\zetatilde^k-\zetan^k(\un)](x,t)\varphi(x,t)\ud x\ud t\right|
+\left|\int_{\O\times (0,T)}[\zetan^k(\un)-\zetan(\un)](x,t)\varphi(x,t)\ud x\ud t\right|\\
+\left|\int_{\O\times (0,T)}[\zetan(\un)-\zetatilde](x,t)\varphi(x,t)\ud x\ud t\right|.
\label{limsuptotake}\end{multline}
By \eqref{eq:zetaweak}, the last term tends to $0$ as $n\to\infty$. The first term
also vanishes as $n\to\infty$. Estimate the second term using
$|\zetan^k(\un)-\zetan(\un)|\le \ch_{\{|\zetan(\un)|\ge k\}}|\zetan(\un)|$,
H\"older's inequality, the energy estimate \eqref{est:energy}
and the inequality \eqref{est:meas}. Taking the limit superior as $n\to\infty$ of
\eqref{limsuptotake} yields
\begin{equation*}
\left|\int_{\O\times (0,T)}[\zetatilde^k-\zetatilde(x,t)]\varphi(x,t)\ud x\ud t\right|\\
\leq \cter{energy}||\varphi||_{L^{\infty}(\O\times (0,T))}
\left(\frac{\cter{cst:tcheb}}{k}\right)^{1/p'}.
\end{equation*}
Letting $k\to\infty$ concludes the proof that $\zetatilde^k\to \zetatilde$
in the sense of distributions.

The proof that $\nutilde^k$ converges as $k\to \infty$ to $\nutilde$ in the sense of 
distributions is similar. The functions $\nutilde^k$ and $\nutilde$
are the weak limits in $L^2(\O\times (0,T))$ of $(\nun^k(\un))_{n\in\NN}$
and $(\nun(\un))_{n\in\NN}$ (note that since the latter sequence is bounded
in $L^\infty(0,T;L^2(\O))$, the convergence \eqref{eq:nunstrong} also holds
weakly in this space). Moreover 
\begin{equation*}
\nun^k(\un)-\nun(\un)=\int_0^{\un} \betan'(q)\left(\trunc_k(\zetan)-\zetan\right)'(q)\ud q
=0\quad\mbox{ if $|\un|\le k$}.
\end{equation*}
We can therefore reproduce the same reasoning as for the convergence of $(\zetatilde^k)_{k\to\infty}$
to see that $\nutilde^k\to \nutilde$ in the sense of distributions as $k\to\infty$.

\begin{remark}\label{rem:compcomp}If $\betan=\Id$ (resp. $\zetan=\Id$), then $\nun=\zetan$
(resp. $\nun=\betan$) and the strong convergence of $\nun(\un)$ enables us to pass to the limit
in $\int_{\O\times (0,T)}\betan(\un)\zetan(\un)$ (or the truncated version if $p$ is small).
We only need the compensated compactness lemma to identify this limit in the case
of two genuine degeneracies, that is $\betan\not=\Id$ and $\zetan\not=\Id$.

\end{remark}

% --- STEP 3 ---

\subsection{Step 3: the function $u$ is a solution to \eqref{eq:model}}\label{ssec:step3}

We know that $\zeta(u)=\zetatilde\in L^p(0,T;W^{1,p}_0(\O))$, $\beta(u)=\betatilde
\in C([0,T];L^2(\O)\weak)$ (with an abuse of notation), $\ol{\beta(u)}(\cdot,0)=
\betatilde(\cdot,0)=\beta(\uini)$. Since $(\partial_t\betan(\un))_{n\in\NN}$
is bounded in $L^{p'}(0,T;W^{-1,p'}(\O))$, we infer that $\partial_t \betan(\un)
\weakto \partial_t \beta(u)$ weakly in this space.
Lemma \ref{lem:convBn} shows that $\Psi=B$ and $\Psi_n=B_n$
satisfy the assumptions of Lemma \ref{lem:fatouconv2}. 
Let $T_0 \in [0,T]$. By \eqref{conv:weakunifbeta}, $\ol{\betan(\un)}(\cdot,T_0)
\weakto \ol{\beta(u)}(\cdot,T_0)$ weakly in $L^2(\O)$. Hence by Lemma \ref{lem:fatouconv2},
\begin{equation}\label{eq:minorliminf}
\int_{\O}B(\ol{\beta(u)}(x,T_0))\ud x\leq \liminf_{n\to\infty}\int_{\O}\Bn(\ol{\betan(\un)}(x,T_{0}))\ud x.
\end{equation}
Combined with \eqref{est:energy}, this shows that $B(\beta(u))\in L^\infty(0,T;L^1(\O))$.

Passing to the limit as $n\to\infty$ in \eqref{eq:solution} is then possible
thanks to the convergence properties of $\partial_t \betan(\un)$ and $a_n(\cdot,\nun(\un),
\nabla \zetan(\un))$.
We obtain
\begin{multline}
\int_{0}^{T}\langle \partial_{t}\beta(u)(\cdot,t),v(\cdot,t)\rangle_{W^{-1,p'},W^{1,p}_{0}}\ud t 
+ \int_{0}^{T}\int_{\O}\atilde(x,t)\cdot\nabla v(x,t)\ud x \ud t \\
= \int_{0}^{T}\langle f(\cdot,t),v(\cdot,t)\rangle_{W^{-1,p'},W^{1,p}_{0}}\ud t \quad \forall v\in \LSobo{p}{1}{p}. \label{eq:step2sol2}
\end{multline}

To complete Step 3, it remains to demonstrate that
\begin{equation}
\atilde(x,t) = a(x,\nu(u),\nabla \zeta(u))(x,t) \quad \mbox{for a.e. }(x,t)\in\O\times(0,T). \label{eq:Aequalsa}
\end{equation}
Let $T_{0}\in[0,T]$ and consider the identity
\eqref{eq:energy} with data $(\betan,\zetan,\nun,\an,\fn,\uinin)$.
Take the limit superior and
use \eqref{eq:zetaweak} (recall that $\zetatilde=\zeta(u)$) to obtain
\begin{multline}\label{eq:alimsup}
\limsup_{n\to\infty}\int_{0}^{T_{0}}\int_{\O}\an(x,\nun(\un),\nabla\zetan(\un))\cdot\nabla\zetan(\un)\ud x \ud t \\
\leq \limsup_{n\to\infty}\int_{\O}\Bn(\betan(\uinin(x)))\ud x 
+ \int_{0}^{T_{0}}\langle f(\cdot,t),\zeta(u)(\cdot,t)\rangle_{W^{-1,p'},W^{1,p}_{0}}\ud t \\
- \liminf_{n\to\infty}\int_{\O}\Bn(\ol{\betan(\un)}(x,T_{0}))\ud x.
\end{multline}
Part (\ref{convBn:it2}) of Lemma \ref{lem:convBn} and \eqref{eq:growthB} show that
$B_n\circ \betan$ converges uniformly and has uniform quadratic growth. By
applying Lemma \ref{lem:h}, the convergence $\uinin\to \uini$ in $L^2(\O)$
shows that $(\Bn\circ\betan)(\uinin)\to (B\circ\beta)(\uini)$ in $L^{1}(\O)$.
Together with the inequality \eqref{eq:minorliminf}, this gives 
\begin{multline}\label{atilde:limsup}
\limsup_{n\to\infty}\int_{0}^{T_{0}}\int_{\O}\an(x,\nun(\un),\nabla\zetan(\un))\cdot\nabla\zetan(\un)\ud x \ud t \\
\leq \int_{\O}B(\beta(\uini(x)))\ud x 
+ \int_{0}^{T_{0}}\langle f(\cdot,t),\zeta(u)(\cdot,t)\rangle_{W^{-1,p'},W^{1,p}_{0}}\ud t \\
- \int_{\O}B(\ol{\beta(u)}(x,T_{0}))\ud x = \int_{0}^{T_{0}}\int_{\O}\atilde(x,t)\cdot\nabla\zeta(u)\ud x \ud t,
\end{multline}
using the identities \eqref{eq:integparts} (with $v=u$) and \eqref{eq:step2sol2} (with
$v=\zeta(u)$).

We now employ the classical Minty--Browder argument. For $G\in \Leb[d]{p}{p}$, the monotonicity
of $(\an)_{n\in\NN}$ gives
\begin{equation}
\int_{0}^{T_{0}}\int_{\O}[\an(x,\nun(\un),\nabla\zetan(\un))-\an(x,\nun(\un),G)]\cdot[\nabla\zetan(\un)-G]\ud x\ud t\geq 0.
\label{eq:anmonotone}
\end{equation}
Together with the strong convergence in $L^{1}(\O\times(0,T))$ of $\nun(\un)$ to $\nu(u)$,
the assumptions on the sequence $(\an)_{n\in\NN}$ ensure that
$\an(\cdot,\nun(\un),G)$ converges in $L^{p'}(\O\times(0,T))^{d}$ to $a(\cdot,\nu(u),G)$.
Using this, \eqref{atilde:limsup} and the weak convergence \eqref{eq:anweak}, we pass to the limit superior on the 
expanded form of \eqref{eq:anmonotone} with $T_{0}=T$ to see that
\begin{equation*}
\int_{0}^{T_{0}}\int_{\O}[\atilde(x,t) - a(x,\nu(u(x,t)),G(x,t))]
\cdot[\nabla\zeta(u)(x,t)-G(x,t)]\ud x\ud t \geq0.
\end{equation*}
Following G.J. Minty \cite{minty}, take $G=\nabla\zeta(u) \pm r\varphi$ for 
$\varphi\in\Leb[d]{p}{p}$, divide by $r>0$ and let $r\to 0$ to obtain \eqref{eq:Aequalsa}.

% --- STEP 4 ---

\subsection{Step 4: uniform temporal convergence of $\nun(\un)$ to $\nu(u)$}
\label{ssec:step4}
Take $T_\infty\in[0,T]$ and $(T_{n})_{n\in\NN}\subset[0,T]$ a sequence converging to
$T_\infty$. Thanks to Lemma \ref{lem:equiv-unifconv}, the convergence of
$(\ol{\nun(\un)})_{n\in\NN}$ in $C([0,T];L^2(\O))$ follows if we can demonstrate that
\begin{equation}\label{eq:nununiform}
\lim_{n\to\infty}\norm{\ol{\nun(\un)}(\cdot,T_{n}) - \ol{\nu(u)}(\cdot,T_\infty)}_{L^{2}(\O)}=0.
\end{equation}
Note the use of the continuous representatives $[0,T]\to L^2(\O)$
of $\nun(\un)$ and $\nu(u)$ (whose existence is ensured by Lemma \ref{lem:integparts}).
Without loss of generality, we can assume that $T_n$ is such that
\begin{equation}\label{eq:ae}
\betan(\un(\cdot, \Tn))=\ol{\betan(\un)}(\cdot, \Tn)\mbox{ and }
\nun(\un(\cdot, \Tn)) = \ol{\nun(\un)}(\cdot, \Tn)\mbox{ a.e. on $\O$}.
\end{equation}
Indeed, by definition of the continuous representatives,
there is $T_n'\in (T_n-1/n,T_n+1/n)\cap [0,T]$ such that \eqref{eq:ae}
holds at $T_n'$ and such that 
\[
\norm{ \ol{\nun(\un)}(\cdot, \Tn) 
- \ol{\nun(\un)}(\cdot, \Tn') }_{L^{2}(\O)} \leq \frac{1}{n},
\]
using
$\ol{\nun(\un)}\in C([0,T];L^2(\O))$. Proving \eqref{eq:nununiform} with $T_n'$
instead of $T_n$ establishes it for $T_n$ also.

To estimate the quantity in \eqref{eq:nununiform}, which involves a variation
of $\nu_n$ and $\un$ with respect to $n$, our strategy is to freeze one of these
variations using the triangle inequality with $\nun(u)(\cdot,T_\infty)$ as an intermediate
point. But $\nun(u)$ may not be continuous in time, so its value at $T_\infty$ is not well-defined. 
Instead we use $\nun(u)(\cdot,s)$ and average over a small interval around $T_\infty$. 
To this end, let $\eps>0$ and define
$I_{\eps}:= [T_\infty-\eps, T_\infty+\eps]\cap[0,T]$. 
Using \eqref{eq:ae} and \eqref{eq:Bunifconvex} with $\nun$, $B_n$ and $\betan$, write
%\begin{multline}
%\norm{\ol{\nun(\un)}(\cdot,\Tn) - \ol{\nu(u)}(\cdot,T_\infty)}_{L^{2}(\O)}^{2}\label{eq:Iaverage}\\
%\leq 2\,\dashint_{I_{\eps}} \norm{\nun(\un(\cdot,\Tn)) - \nun(u(\cdot,s))}_{L^{2}(\O)}^{2}\ud s
%+ 2\,\dashint_{I_{\eps}}\norm{\nun(u(\cdot,s)) - \ol{\nu(u)}(\cdot,T_\infty)}_{L^{2}(\O)}^{2}\ud s\\
%\leq 8L_{\beta}L_{\zeta} \bigg(\int_{\O}\Bn(\betan(\un(x,\Tn)))\ud x 
%+ \dashint_{I_{\eps}}\int_{\O}\Bn(\betan(u(x,s)))\ud x \ud s\\
%-2\,\dashint_{I_{\eps}}\int_{\O}\Bn\left(\frac{\betan(\un(x,\Tn)) + \betan(u(x,s))}{2}\right)\ud x \ud s\bigg)
%+ 2\,\dashint_{I_{\eps}}\norm{\nun(u(\cdot,s)) - \ol{\nu(u)}(\cdot,T_\infty)}_{L^{2}(\O)}^{2}\ud s\\
%=: 8L_{\beta}L_{\zeta} [\term_{1}(n) + \term_{2}(n,\eps) - 2\term_{3}(n,\eps)] + 2\term_{4}(n,\eps).
%\end{multline} 
\begin{align}
\Big\lVert&\ol{\nun(\un)}(\cdot,\Tn) - \ol{\nu(u)}(\cdot,T_\infty)\Big\rVert_{L^{2}(\O)}^{2}\label{eq:Iaverage}\\
\leq{}& 2\,\dashint_{I_{\eps}} \norm{\nun(\un(\cdot,\Tn)) - \nun(u(\cdot,s))}_{L^{2}(\O)}^{2}\ud s
+ 2\,\dashint_{I_{\eps}}\norm{\nun(u(\cdot,s)) - \ol{\nu(u)}(\cdot,T_\infty)}_{L^{2}(\O)}^{2}\ud s\nonumber\\
\leq{}& 8L_{\beta}L_{\zeta} \bigg(\int_{\O}\Bn(\betan(\un(x,\Tn)))\ud x 
+ \dashint_{I_{\eps}}\int_{\O}\Bn(\betan(u(x,s)))\ud x \ud s\nonumber\\
&-2\,\dashint_{I_{\eps}}\int_{\O}\Bn\left(\frac{\betan(\un(x,\Tn)) + \betan(u(x,s))}{2}\right)\ud x \ud s\bigg)\nonumber\\
&+ 2\,\dashint_{I_{\eps}}\norm{\nun(u(\cdot,s)) - \ol{\nu(u)}(\cdot,T_\infty)}_{L^{2}(\O)}^{2}\ud s\nonumber\\
=:{}& 8L_{\beta}L_{\zeta} [\term_{1}(n) + \term_{2}(n,\eps) - 2\term_{3}(n,\eps)] + 2\term_{4}(n,\eps).\nonumber
\end{align}
To determine the convergence of $\term_{1}$, expand \eqref{eq:anmonotone} with $T_{0} = T_n$,
$G=\nabla\zeta(u)$ and take the limit inferior of the resulting expression. Noting the 
identity \eqref{eq:Aequalsa}, we obtain
\begin{multline}
\liminf_{n\to\infty}\int_{0}^{T_n}\int_{\O}\an(x,\nun(\un),\nabla\zetan(\un))\cdot\nabla\zetan(\un)\ud x \ud t\\
\geq \int_{0}^{T_\infty}\int_{\O} a(x,\nu(u),\nabla\zeta(u))\cdot\nabla\zeta(u)\ud x \ud t. \label{eq:liminfan}
\end{multline}
Now in \eqref{eq:energy}, replace 
$(\beta,\zeta,\nu,a,f,\uini, T_{0})$ with $(\betan,\zetan,\nun,\an,\fn,\uinin, T_{n})$
and using \eqref{eq:ae}, \eqref{eq:liminfan} and the fact that $u$ satisfies the energy
equality \eqref{eq:energy}, take the limit superior as $n\to\infty$
to deduce that
\begin{equation} \label{eq:T1}
\limsup_{n\to\infty} \term_{1}(n)
\leq \int_{\O}B(\ol{\beta(u)}(x,T_\infty))\ud x<+\infty.
\end{equation}
To handle $\term_{2}$, recall that $B_n\circ\betan$ converges locally uniformly on $\RR$ to
to $B\circ\beta$ (Lemma \ref{lem:convBn}). By Hypothesis \eqref{hyp:pbeta},
$u\in L^2(\O\times (0,T))$. Hence by the dominated convergence theorem,
the quadratic growth \eqref{eq:growthB} of $B_n$ ensures that
$\Bn(\betan(u))\to B(\beta(u))=B(\ol{\beta(u)})$ in $L^{1}(\O\times(0,T))$ and so 
\begin{equation} \label{eq:T2}
\lim_{n\to\infty} \term_{2}(n,\eps) = \dashint_{I_{\eps}}\int_{\O}B(\ol{\beta(u)}(x,s))\ud x\ud s.
\end{equation}
Now the convexity of $\Bn$ enables the application of Jensen's inequality to $\term_{3}$, yielding
\begin{multline*}
\term_{3}(n,\eps)=
\dashint_{I_{\eps}}\int_{\O}\Bn\left(\frac{\betan(\un(x,\Tn)) + \betan(u(x,s))}{2}\right)\ud x \ud s\\
\geq \int_{\O}\Bn\left( \frac{\betan(\un(x,\Tn)) + \dashint_{I_{\eps}}\betan(u(x,s))\ud s}{2}\right)\ud x.
\end{multline*}
The convergence in $C([0,T]; L^{2}(\O)\weak)$ of $\ol{\betan(\un)}$ to $\ol{\beta(u)}$ and the continuity
of the latter imply by Lemma \ref{lem:equiv-unifconv} that
$\betan(\un(\cdot,\Tn))=\ol{\betan(\un)}(\cdot, \Tn)\weakto \ol{\beta(u)}(\cdot, T_\infty)$ weakly in $L^{2}(\O)$.
Since $u\in L^2(\O\times (0,T))$ the assumptions on $\betan$
give $\betan(u)\to\beta(u)=\ol{\beta(u)}$ in $\Leb{2}{2}$, and so
\begin{equation*}
\dashint_{I_{\eps}}\betan(u(\cdot,s))\ud s \to \dashint_{I_{\eps}}\ol{\beta(u)}(\cdot,s)\ud s \quad \mbox{in $L^{2}(\O)$.}
\end{equation*}
Thus 
$\frac{1}{2}( \betan(\un(,\cdot, \Tn)) + \dashint_{I_{\eps}}\betan(u(\cdot,s))\ud s)
\weakto \frac{1}{2}(\ol{\beta(u)}(\cdot,T_\infty) + \dashint_{I_{\eps}}\ol{\beta(u)}(\cdot,s)\ud s)$
weakly in $L^{2}(\O)$ and Lemma \ref{lem:fatouconv2} gives
\begin{equation} \label{eq:T3}
\int_{\O}B\left( \frac{\ol{\beta(u)}(x,T_\infty) + \dashint_{I_{\eps}}\ol{\beta(u)}(x,s)\ud s}{2}\right)\ud x
\leq \liminf_{n\to\infty}\term_{3}(n, \eps).
\end{equation}
Since $u\in L^2(\O\times(0,T))$, $\nun(u)\to \nu(u)$ in $L^2(\O\times(0,T))$
and so
\begin{multline} \label{eq:T4}
\term_{4}(n, \eps)
= \frac{1}{|I_{\eps}|}\norm{\nun(u) - \ol{\nu(u)}(\cdot,T_\infty)}_{L^{2}(\O\times I_{\eps})}^{2}
\to \frac{1}{|I_{\eps}|}\norm{\nu(u) - \ol{\nu(u)}(\cdot,T_\infty)}_{L^{2}(\O\times I_{\eps})}^{2}\\
= \dashint_{I_{\eps}}\norm{\ol{\nu(u)}(\cdot,s) - \ol{\nu(u)}(\cdot,T_\infty)}_{L^{2}(\O)}^{2}\ud s.
\end{multline}
Thanks to \eqref{eq:T1}, \eqref{eq:T2} and \eqref{eq:T3}, we may split the limit superior as 
$n\to\infty$ of the right-hand side of \eqref{eq:Iaverage}, using \eqref{eq:T4} 
for the remaining term to obtain
\begin{multline}\label{eq:Iaverage2}
\limsup_{n\to\infty}\norm{\ol{\nun(\un)}(\cdot,T_{n}) - \ol{\nu(u)}(\cdot,T_\infty)}_{L^{2}(\O)}^{2}\\
\leq 8L_{\beta}L_{\zeta}\bigg( \int_{\O}B(\ol{\beta(u)}(x,T_\infty)\ud x 
+ \dashint_{I_{\eps}}\int_{\O}B(\ol{\beta(u)}(x,s))\ud x \ud s\\ 
- 2\int_{\O}B\left( \frac{\ol{\beta(u)}(x,T_\infty) + \dashint_{I_{\eps}}
\ol{\beta(u)}(x,s)\ud s}{2}\right)\ud x\bigg)
+ 2\,\dashint_{I_{\eps}}\norm{\ol{\nu(u)}(\cdot,s) - \ol{\nu(u)}(\cdot,T_\infty)}_{L^{2}(\O)}^{2}\ud s.
\end{multline}
To complete the proof it remains to take the superior limit as $\eps\to 0$. By the 
continuity of the mapping $[0,T]\ni s \mapsto \int_{\O}B(\ol{\beta(u)}(x,s))\ud x$,
\begin{equation*}
\lim_{\eps\to 0}\,\dashint_{I_{\eps}}\int_{\O}B(\ol{\beta(u)}(x,s))\ud x \ud s 
= \int_{\O}B(\ol{\beta(u)}(x,T_\infty))\ud x.
\end{equation*}
Using the continuity of $\ol{\nu(u)}: [0,T]\to L^{2}(\O)$,
\begin{equation*}
\lim_{\eps\to 0}\,\dashint_{I_{\eps}}\norm{\ol{\nu(u)}(\cdot,s)-\ol{\nu(u)}(\cdot,T_\infty)}_{L^{2}(\O)}^{2}\ud s = 0.
\end{equation*}
Since $B$ is convex lower semi-continuous and $\frac{1}{2}(\ol{\beta(u)}(\cdot,T_\infty) +
\dashint_{I_{\eps}}\ol{\beta(u)}(\cdot,s)\ud s)
\weakto \ol{\beta(u)}(\cdot,T_\infty)$ weakly in $L^{2}(\O)$ as $\eps\to 0$
(using the continuity of $\ol{\beta(u)}:[0,T]\to L^2(\O)\weak$), we apply 
Lemma \ref{lem:fatouconv2} to deduce that
\begin{equation*}
\int_{\O}B(\ol{\beta(u)}(x,T_\infty))\ud x
\leq \liminf_{\eps\to 0}\int_{\O}B\left( \frac{\ol{\beta(u)}(x,T_\infty) + \dashint_{I_{\eps}}
\ol{\beta(u)}(x,s)\ud s}{2}\right)\ud x.
\end{equation*}
Taking the limit supremum as $\eps\to 0$ of \eqref{eq:Iaverage2} yields \eqref{eq:nununiform},
hence the result.

\begin{remark} Since $\ol{\betan(u_n)}(\cdot,T_n)\weakto \ol{\beta(u)}(\cdot,T_\infty)$ weakly in $L^2(\Omega)$
whenever $T_n\to T_\infty$ (see Lemma \ref{lem:equiv-unifconv}),
\eqref{eq:minorliminf} still holds with $T_0$ in the left-hand side replaced
with $T_\infty$ and $T_0$ in the right-hand side replaced with $T_n$. Thus with \eqref{eq:T1} we see that
\[
\int_\O B_n(\ol{\betan(u_n)}(x,T_n))\ud x\to \int_\O B(\ol{\beta(u)}(x,T_\infty))\ud x\quad\mbox{as $n\to\infty$.}
\]
Lemma \ref{lem:equiv-unifconv} and Part (i) in Lemma \ref{lem:integparts} then
show that $\int_\O B_n(\ol{\betan(u_n)}(x,\cdot))\ud x$ converges uniformly to
$\int_\O B(\ol{\beta(u)}(x,\cdot))\ud x$ on $[0,T]$.
\end{remark}

% --- STEP 5 ---

\subsection{Step 5: convergence of $\zetan(\un)$ to $\zeta(u)$ in $\LSobo{p}{1}{p}$.}
\label{ssec:step5}
We follow the ideas of J. Leray and J.-L. Lions \cite{ll65}.
Use \eqref{atilde:limsup} with $T_0=T$ and \eqref{eq:Aequalsa}:
\[
\limsup_{n\to\infty}\int_{0}^{T}\int_{\O}\an(x,\nun(\un),\nabla\zetan(\un))\cdot\nabla\zetan(\un)\ud x \ud t
\leq
\int_{0}^{T}\int_{\O}a(x,\nu(u),\nabla\zeta(u))\cdot\nabla\zeta(u)\ud x \ud t.
\]
Together with $T_n = T_\infty = T$ in \eqref{eq:liminfan}, we see that
\begin{multline}
\lim_{n\to\infty}\int_{0}^{T}\int_{\O}\an(x,\nun(\un),\nabla\zetan(\un))\cdot\nabla\zetan(\un)\ud x\ud t\\
= \int_{0}^{T}\int_{\O}a(x,\nu(u),\nabla\zeta(u))\cdot\nabla\zeta(u)\ud x \ud t.\label{eq:limintan}
\end{multline}
Now define
\begin{equation*}
F_{n}:=[\an(x,\nun(\un),\nabla\zetan(\un))-\an(x,\nun(\un),\nabla\zeta(u))]\cdot[\nabla\zetan(\un)-\nabla\zeta(u)]\geq 0,
\end{equation*}
integrate this expression over $\O\times(0,T)$ and expand. The convergences 
\eqref{eq:zetaweak}, \eqref{eq:anweak}, \eqref{eq:limintan} and the convergence
in $L^{p'}(\O\times(0,T))^{d}$ of $\an(\cdot,\nun(\un),\nabla\zeta(u))$ to $a(\cdot,\nu(u),\nabla\zeta(u))$ imply that, as $n\to\infty$,
\begin{equation*}
\int_{0}^{T}\int_{\O}F_{n}(x,t)\ud x \ud t \to 0.
\end{equation*}
The nonnegativity of $F_n$ then ensures that $F_n$ converges to zero in $L^{1}(\O\times(0,T))$
and therefore, upon extraction of a subsequence, almost everywhere on $\O\times(0,T)$.
Now use the strict monotonicity of $a$ to apply 
Lemma \ref{lem:monotone} with $X=\O\times\RR$, $b_{n}(s,\xi) = \an(x,s,\xi)$, 
$\chi_n = \nabla\zetan(\un)$ to deduce that, up to a subsequence, 
$\nabla\zetan(\un)\to\nabla\zeta(u)$ almost everywhere on $\O\times(0,T)$.
A subsequence of $(\nun(\un))_{n\in\NN}$ converges almost everywhere on $\O\times(0,T)$
to $\nu(u)$, therefore the local uniform convergence on $\RR\times\RR^d$ of $(\an)_{n\in\NN}$ 
ensures that
\begin{equation*}
\an(\cdot,\nun(\un),\nabla\zetan(\un))\cdot\nabla\zetan(\un)\to a(\cdot,\nu(u),\nabla\zeta(u))\cdot\nabla\zeta(u) 
\quad \mbox{a.e. on }\O\times(0,T).
\end{equation*}
Lemma \ref{lem:l1strong} then guarantees, using \eqref{eq:limintan} and the nonnegativity
of $\an(\cdot,\nun(\un),\nabla\zetan(\un))\cdot\nabla\zetan(\un)$, that
\begin{equation*}
\an(\cdot,\nun(\un),\nabla\zetan(\un))\cdot\nabla\zetan(\un)\to a(\cdot,\nu(u),\nabla\zeta(u))\cdot\nabla\zeta(u) 
\quad \mbox{in $L^{1}(\O\times(0,T))$.}
\end{equation*}
Therefore, the sequence $(\an(\cdot,\nun(\un),\nabla\zetan(\un))\cdot\nabla\zetan(\un))_{n\in\NN}$
is equi-integrable, and so too is $(|\nabla\zetan(\un)|^{p})_{n\in\NN}$ thanks to
the uniform coercivity of $(\an)_{n\in\NN}$. The strong convergence \eqref{eq:strgrad}
then follows from Vitali's theorem. \qed

% --- APPENDIX ---

\appendix
\section{Convergence lemmas} \label{app:convlemmas}

We make frequent use of the following lemma, proved in \cite{dt14}.
% H-lemma
\begin{lemma} \label{lem:h}
Let $\Hn:\RR\to\RR$ be a sequence of continuous functions such that
\begin{enumerate}[(i)]
\item there exist positive constants $\ctel{h}$, $\gamma$ such that 
for every $s\in\RR$, $|\Hn(s)|\leq \cter{h}(1 + |s|^{\gamma})$;
\item $\Hn$ converges locally uniformly on $\RR$ to a continuous function 
$H:\RR\to\RR$.
\end{enumerate}
Let $N\in\NN$ and take a bounded subset $E$ of $\RR^N$. If $q\in[\gamma,\infty)$ 
and $(\vn)_{n\in\NN}\subset L^{q}(E)$ is such that $\vn\to v$ in $L^{q}(E)$, then
$\Hn(\vn)\to H(v)$ in $L^{q/\gamma}(E)$ as $n\to\infty$.
\end{lemma}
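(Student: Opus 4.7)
The plan is to combine the standard sub-sub-sequence principle with an application of Vitali's convergence theorem. Call a generic subsequence $(v_{n_k})_{k\in\NN}$. Since $v_{n_k}\to v$ in $L^q(E)$, I can extract a further subsequence (not relabelled) such that $v_{n_k}\to v$ almost everywhere on $E$, and such that there exists a dominating function $g\in L^q(E)$ with $|v_{n_k}|\le g$ a.e.\ on $E$. The growth hypothesis (i) then yields
\begin{equation*}
|H_{n_k}(v_{n_k})|^{q/\gamma}\le \cter{h}^{q/\gamma}(1+|v_{n_k}|^\gamma)^{q/\gamma}\le C(1+g^q)\in L^1(E),
\end{equation*}
which in particular gives equi-integrability of the sequence $(|H_{n_k}(v_{n_k})|^{q/\gamma})_{k\in\NN}$.

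Next I would establish the pointwise convergence $H_{n_k}(v_{n_k})\to H(v)$ almost everywhere on $E$. Fix $x$ in the set of full measure on which $v_{n_k}(x)\to v(x)$; then $(v_{n_k}(x))_{k\in\NN}$ is bounded, so lies in some compact $K\subset\RR$. Writing
\begin{equation*}
|H_{n_k}(v_{n_k}(x))-H(v(x))|\le \sup_{s\in K}|H_{n_k}(s)-H(s)|+|H(v_{n_k}(x))-H(v(x))|,
\end{equation*}
the first term vanishes by the local uniform convergence in (ii), and the second by the continuity of $H$.

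With pointwise convergence a.e.\ and the domination above, Vitali's convergence theorem (or, equivalently here, the generalised dominated convergence theorem) yields $H_{n_k}(v_{n_k})\to H(v)$ in $L^{q/\gamma}(E)$. Since every subsequence of $(v_n)_{n\in\NN}$ admits a further subsequence along which $H_n(v_n)\to H(v)$ in $L^{q/\gamma}(E)$, and the limit is the same, the whole sequence $(H_n(v_n))_{n\in\NN}$ converges to $H(v)$ in $L^{q/\gamma}(E)$. There is no real obstacle to this argument; the only point requiring mild care is the use of local uniform convergence, which works precisely because pointwise limits of $(v_{n_k}(x))_{k\in\NN}$ are automatically contained in a compact subset of $\RR$.
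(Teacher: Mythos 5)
Your proof is correct, and it is essentially the standard argument for this lemma: the paper itself does not reprove it but defers to the cited reference \cite{dt14}, whose proof follows the same route (sub-subsequence principle, a.e.\ convergence with an $L^q$ dominating function, local uniform convergence plus continuity of $H$ to pass to the limit pointwise, then dominated/Vitali convergence in $L^{q/\gamma}$, using $q/\gamma\ge 1$ and the boundedness of $E$). No gaps.
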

The next lemma gives an equivalent characterisation of uniform convergence, which is
critical to Step 3 of the proof of our main result. For a proof of this lemma, see \cite{dey14}.

% equivalence condition for uniform convergence
\begin{lemma} \label{lem:equiv-unifconv}
Let $(K,d_K)$ be a compact metric space, $(E,d_E)$ a metric space. 
Denote by $\cF(K,E)$ the space of functions $K\to E$, endowed with
the uniform metric $d_\cF(v,w)=\sup_{s\in K}d_E(v(s),w(s))$ (note that
this metric may take infinite values).

Let $(v_n)_{n\in\NN}$ be a sequence in $\cF(K,E)$ and $v:K\to E$ be
continuous. Then $v_n\to v$ for $d_{\cF}$ if and only if, for any $s\in K$ and
any sequence $(s_n)_{n\in\NN}\subset K$ converging to $s$ for $d_K$,
$v_n(s_n)\to v(s)$ for $d_E$.
\end{lemma}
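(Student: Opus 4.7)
The plan is to prove the equivalence in two steps, with the forward implication being direct and the reverse implication going by contradiction combined with the compactness of $K$.

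For the forward direction, I would assume $v_n\to v$ uniformly and fix a sequence $s_n\to s$ in $K$. A simple triangle inequality gives
\begin{equation*}
d_E(v_n(s_n),v(s))\le d_E(v_n(s_n),v(s_n))+d_E(v(s_n),v(s))\le d_\cF(v_n,v)+d_E(v(s_n),v(s)),
\end{equation*}
and both terms on the right tend to $0$, the first by hypothesis and the second by continuity of $v$ at $s$.

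For the reverse direction, I would argue by contradiction. Suppose $v_n\not\to v$ in $d_\cF$. Then there exist $\eps>0$ and a subsequence $(n_k)_{k\in\NN}$ such that $d_\cF(v_{n_k},v)>\eps$ for every $k$. By definition of the supremum defining $d_\cF$, I can then pick $t_k\in K$ with $d_E(v_{n_k}(t_k),v(t_k))>\eps/2$. By compactness of $K$, extract a further subsequence (still indexed by $k$) so that $t_k\to s^*$ for some $s^*\in K$.

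The obstacle to a direct contradiction is that the hypothesis concerns the \emph{full} sequence $v_n$, whereas I only have information along a subsequence. The key trick will be to define a full sequence $(s_n)_{n\in\NN}$ in $K$ by setting $s_n=t_k$ when $n=n_k$ and $s_n=s^*$ otherwise. Since $t_k\to s^*$ and the complementary indices contribute the constant value $s^*$, we still have $s_n\to s^*$. Applying the hypothesis yields $v_n(s_n)\to v(s^*)$ in $E$, and restricting to the subsequence $n=n_k$ gives $v_{n_k}(t_k)\to v(s^*)$. Combined with $v(t_k)\to v(s^*)$ from continuity of $v$, the triangle inequality shows $d_E(v_{n_k}(t_k),v(t_k))\to 0$, contradicting the lower bound $\eps/2$. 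This completes the proof.
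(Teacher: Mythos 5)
Your proof is correct. Note that the paper does not actually prove this lemma — it defers to the reference \cite{dey14} — so there is no in-text argument to compare against; the route you take (triangle inequality through $d_\cF(v_n,v)$ plus continuity of $v$ for the forward implication; contradiction, compactness of $K$ to extract $t_k\to s^*$, and the completion of the subsequence into a full sequence $(s_n)$ by inserting $s^*$ at the remaining indices for the converse) is the standard argument and is exactly what such a proof needs, with the ``fill-in'' trick correctly resolving the only delicate point, namely that the hypothesis is stated for full sequences while the contradiction only produces a subsequence.
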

We employ the final two lemmas of this appendix in Section \ref{ssec:step5} to establish the (strong) convergence
in $\LSobo{p}{1}{p} $ of $\zetan(\un)$ to $\zeta(u)$. For a proof of the first of
these lemmas, see \cite[Lemma 3.3]{degh13}. The second is a slight modification of
\cite[Lemma 3.2]{degh13}.

% lemma needed for the strong convergence of the gradient.
\begin{lemma} \label{lem:l1strong}
Let $(F_{n})_{n\in\NN}$ be a sequence of nonnegative functions in $L^{1}(\O)$. Let
$F\in L^{1}(\O)$ be such that $F_n \to F$ almost everywhere and
\begin{equation*}
\int_{\O}F_{n}(x)\ud x \to \int_{\O}F(x)\ud x.
\end{equation*}
Then $F_{n}\to F$ in $L^{1}(\O)$ as $n\to\infty$.
\end{lemma}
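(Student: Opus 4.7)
The plan is to reduce the $L^1$ convergence to an application of the dominated convergence theorem by writing $|F_n - F|$ in a form that is controlled by an integrable envelope. The key algebraic identity is
\begin{equation*}
|F_n - F| = F_n + F - 2\min(F_n, F),
\end{equation*}
which is valid pointwise a.e.\ because both $F_n$ and $F$ are nonnegative. Observing that $0 \le \min(F_n, F) \le F \in L^1(\O)$ and that $\min(F_n, F) \to F$ a.e.\ as $n \to \infty$ (since $F_n \to F$ a.e.), dominated convergence gives $\int_\O \min(F_n, F) \to \int_\O F$.

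Integrating the identity above and using the hypothesis $\int_\O F_n \to \int_\O F$ along with the convergence of the minima, I would then conclude
\begin{equation*}
\int_\O |F_n - F| \ud x = \int_\O F_n \ud x + \int_\O F \ud x - 2\int_\O \min(F_n, F)\ud x \longrightarrow 2\int_\O F\ud x - 2\int_\O F\ud x = 0,
\end{equation*}
which is the desired conclusion.

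There is no real obstacle; the only subtle point is that one must exploit the sign of $F_n$ and $F$ in order to get the integrable dominating function $F$ for the sequence $\min(F_n, F)$, which is why the identity is phrased with minima rather than, say, $|F_n - F|^2$ or a truncation. An equivalent route would be to apply Fatou's lemma to the nonnegative sequence $(F_n - F)^- = \max(F - F_n, 0)$, which is bounded above by $F$, to obtain $\limsup \int (F_n - F)^- \le 0$, and then use $\int (F_n - F)^+ = \int (F_n - F)^- + \int (F_n - F)$ together with the hypothesis on the integrals to control $(F_n - F)^+$; summing yields the $L^1$ convergence. I prefer the first approach for its brevity.
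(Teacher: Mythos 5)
Your proof is correct: the identity $|F_n-F|=F_n+F-2\min(F_n,F)$, the domination $0\le\min(F_n,F)\le F\in L^1(\O)$, and dominated convergence together with the hypothesis on the integrals give $\int_\O|F_n-F|\ud x\to 0$ exactly as you state (your alternative via Fatou applied to $(F_n-F)^-$ is equally valid). Note that the paper does not prove this lemma itself --- it cites \cite[Lemma 3.3]{degh13} --- and what you have written is precisely the classical Scheff\'e-lemma argument that underlies that reference, so your proof fills the role intended for the citation with no gap.
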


% another lemma for strong convergence of gradients proof
\begin{lemma}\label{lem:monotone}
Let $X$ be a metric space and for every $n\in\NN$ let 
$b_n : X\times\RR^d \to \RR^d$ be continuous and monotone:
\begin{equation*}
(b_{n}(u,\delta)-b_{n}(u,\gamma))\cdot(\delta-\gamma)\geq0 \quad \forall u\in X,\ 
\forall \delta,\gamma\in\RR^d.
\end{equation*}
Assume that $b_n$ converges locally uniformly on $X\times\RR^d$ to a continuous
map $b:X\times\RR^d \to \RR^d$ that is strictly monotone:
\begin{equation*}
(b(u,\delta)-b(u,\gamma))\cdot(\delta-\gamma)>0 \quad \forall u\in X,\ 
\forall \delta\neq\gamma\in\RR^d.
\end{equation*}
Take a sequence $(\un,\chi_{n})\in X\times\RR^d$ and $(u,\chi)\in X\times \RR^d$ such
that as $n\to\infty$,
\begin{equation*}
(b_{n}(\un,\chi_{n})-b_{n}(\un,\chi))\cdot(\chi_{n}-\chi) \to 0 \quad \mbox{and} \quad \un\to u.
\end{equation*}
Then $\chi_n \to \chi$.
\end{lemma}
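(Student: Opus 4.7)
The plan is to prove the lemma in three stages: first establish boundedness of $(\chi_n)$, then identify every subsequential limit as $\chi$ via the strict monotonicity of $b$, and conclude by uniqueness.

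First, I would argue that $(\chi_n)$ is bounded. Suppose for contradiction that, up to a subsequence, $t_n := |\chi_n - \chi| \to \infty$. Write $\chi_n = \chi + t_n v_n$ with $|v_n|=1$, and extract a further subsequence so that $v_n \to v$ with $|v|=1$. The monotonicity of $b_n$ along the half-line $s\mapsto \chi + s v_n$ shows that $s\mapsto (b_n(u_n,\chi+sv_n) - b_n(u_n,\chi))\cdot v_n$ is nondecreasing on $[0,\infty)$. In particular, for any fixed $s>0$ and all $n$ sufficiently large (so that $s<t_n$),
\begin{equation*}
0 \leq \bigl(b_n(u_n,\chi+sv_n)-b_n(u_n,\chi)\bigr)\cdot v_n
\leq \frac{1}{t_n}\bigl(b_n(u_n,\chi_n)-b_n(u_n,\chi)\bigr)\cdot(\chi_n-\chi),
\end{equation*}
and the right-hand side tends to $0$ by hypothesis (and $t_n\to\infty$). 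Since $(u_n,\chi+sv_n)$ remains in a compact set, the local uniform convergence $b_n\to b$ and the continuity of $b$ let me pass to the limit: $(b(u,\chi+sv)-b(u,\chi))\cdot v = 0$. On the other hand, strict monotonicity of $b$ gives $(b(u,\chi+sv)-b(u,\chi))\cdot(sv) > 0$ for every $s>0$ (since $sv\neq 0$), a contradiction. Hence $(\chi_n)$ is bounded.

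Next, let $\chi^\ast$ be any subsequential limit of $(\chi_n)$. Along that subsequence the points $(u_n,\chi_n)$ lie in a compact subset of $X\times\RR^d$, so local uniform convergence gives $b_n(u_n,\chi_n)\to b(u,\chi^\ast)$, and similarly $b_n(u_n,\chi)\to b(u,\chi)$. Passing to the limit in the assumed convergence yields
\begin{equation*}
\bigl(b(u,\chi^\ast)-b(u,\chi)\bigr)\cdot(\chi^\ast-\chi) = 0,
\end{equation*}
and the strict monotonicity of $b$ forces $\chi^\ast=\chi$. Since $(\chi_n)$ is bounded and every subsequential limit equals $\chi$, the full sequence converges to $\chi$.

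The main obstacle is the first stage: boundedness is not automatic because we have no growth control on the $b_n$ and no uniform coercivity. The key idea is that the monotonicity forces the scalar function $s\mapsto (b_n(u_n,\chi+sv_n)-b_n(u_n,\chi))\cdot v_n$ to be nondecreasing, so its value at any fixed $s$ is controlled by its value at the far endpoint $s=t_n$; the latter is essentially the given vanishing quantity divided by $t_n$, which decays when $t_n\to\infty$. Once this is established, the second stage is just the classical limit-of-monotone-relations argument.
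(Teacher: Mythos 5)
Your proof is correct, and its key device is exactly the one in the paper: the scalar function $s\mapsto\bigl(b_n(u_n,\chi+s\delta)-b_n(u_n,\chi)\bigr)\cdot\delta$ is nondecreasing along the ray toward $\chi_n$, so its value at a fixed parameter is dominated by the (vanishing) far-endpoint value, and local uniform convergence on a compact set plus strict monotonicity of $b$ yield the contradiction. The only difference is organizational: you split the argument into boundedness followed by identification of subsequential limits, whereas the paper runs a single contradiction for any subsequence with $|\chi_n-\chi|\ge\varepsilon$, which treats the bounded and unbounded cases at once.
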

\begin{proof}
Let $\delta\in\RR^{d}\setminus\{ 0 \}$. For $n\in\NN$, define $h_{\delta,n}:\RR\to\RR$
by
\begin{equation*}
h_{\delta,n}(s) := (b_{n}(\un,\chi+s\delta)-b_{n}(\un,\chi))\cdot\delta.
\end{equation*}
For $s>s'$,
\begin{equation*}
(h_{\delta,n}(s)-h_{\delta,n}(s'))(s-s') = (b_{n}(\un,\chi+s\delta)-b_{n}(\un,\chi+s'\delta))\cdot\delta(s-s')\geq 0,
\end{equation*}
so $h_{\delta,n}$ is a nondecreasing function.
Now assume that $\chi_{n}$ does not converge to $\chi$, so there is some $\eps>0$ and a 
subsequence of $(\chi_{n})_{n\in\NN}$, not relabelled for convenience, such that
$s_{n} := |\chi_{n} - \chi| \geq \eps$ for all $n\in\NN$. 
Define 
\begin{equation*}
\delta_{n} := \frac{\chi_{n} - \chi}{|\chi_{n} - \chi|}.
\end{equation*}
There exists $\delta\in\RR^d$ with $|\delta| = 1$ such that, upon extraction of a 
subsequence, $\delta_n \to \delta$. Then
\begin{equation*}
(b_{n}(\un,\chi_{n}) - b_{n}(\un,\chi))\cdot\frac{\chi_{n}-\chi}{s_{n}} = h_{\delta_{n},n}(s_{n})
 \geq h_{\delta_{n},n}(\eps)
= (b_{n}(\un,\chi+\eps \delta_{n}) - b_{n}(\un,\chi))\cdot\delta_{n}.
\end{equation*}
Let $n\to\infty$ to see that
\begin{align*}
0 &= \lim_{n\to\infty} \frac{1}{s_{n}}(b_{n}(\un,\chi_{n}) - b_{n}(\un,\chi))\cdot(\chi_{n}-\chi)\\
&\geq \lim_{n\to\infty}(b_{n}(\un,\chi+\eps \delta_{n}) - b_{n}(\un,\chi))\cdot\delta_{n}\\
&= (b(u,\chi+\eps\delta) - b(u,\chi))\cdot\delta > 0,
\end{align*}
a contradiction.
\end{proof}

% --- COMPENSATED COMPACTNESS ---

\section{Compensated compactness lemma}\label{app:cc}

Space--time compensated compactness results usually state the convergence
of a product $(f_ng_n)_{n\in\NN}$ of functions, each one converging only weakly but
$(f_n)_{n\in\NN}$ having compactness properties in space and $(g_n)_{n\in\NN}$
having compactness properties in time. As seen in the work of A.V. Kazhikhov \cite{kaz98}, 
A. Moussa \cite{moussa} and references therein, the proof of compensated compactness is often a consequence of
the Aubin--Simon compactness theorem. The following lemma is no exception.

\begin{lemma}\label{lem:compcomp}
Let $\O$ be an open and bounded domain in $\RR^d$, $T>0$, and $p\in (1,\infty)$.
Take two sequences of functions $(f_n)_{n\in\NN}$ and $(g_n)_{n\in\NN}$
in $L^2(\O\times (0,T))$ such that
\begin{itemize}
\item $f_n\weakto f$ and $g_n\weakto g$ weakly-$*$ in $L^2(\O\times(0,T))$ as $n\to\infty$,
\item $(f_n)_{n\in\NN}$ is bounded in $L^p(0,T;W^{1,p}_0(\O))$,
\item $(\partial_t g_n)_{n\in\NN}$ is bounded in $L^{p'}(0,T;W^{-1,p'}(\O))$.
\end{itemize}
Assume furthermore that one of the following properties holds:
\begin{enumerate}[(i)]
\item\label{case1} $p\ge 2$, or
\item\label{case2} $\frac{2d}{d+2}<p<2$ and $(g_n)_{n\in\NN}$ is bounded in
$L^{p'}(0,T;L^2(\O))$, or
\item\label{case3} $1<p\le \frac{2d}{d+2}$, $(g_n)_{n\in\NN}$ is bounded in
$L^{p'}(0,T;L^2(\O))$, and $(f_n)_{n\in\NN}$ is bounded in $L^\infty(\O\times(0,T))$.
\end{enumerate}
Then $f_ng_n\to fg$ in the sense of measures on $\O\times(0,T)$, that is, for
all $\varphi\in C(\overline{\O}\times[0,T])$,
\begin{equation}\label{lem:compcomp.conv}
\int_0^T\int_\O f_n(x,t)g_n(x,t)\varphi(x,t)\ud x\ud t\to 
\int_0^T\int_\O f(x,t)g(x,t)\varphi(x,t)\ud x\ud t\mbox{ as $n\to\infty$}. 
\end{equation}
\end{lemma}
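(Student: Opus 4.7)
The approach is a direct application of the Aubin--Simon compactness theorem combined with a weak--strong duality pairing, as announced at the top of this appendix. I first observe that in all three cases $(g_n)_{n\in\NN}$ is bounded in $L^{p'}(0,T;L^2(\O))$: in case (i) this follows from the weak-$*$ convergence in $L^2(\O\times(0,T))$ and $p'\le 2$, while in cases (ii)--(iii) it is part of the hypotheses. Together with the $L^{p'}(0,T;W^{-1,p'}(\O))$-bound on $(\partial_t g_n)_{n\in\NN}$, Aubin--Simon gives strong compactness of $(g_n)_{n\in\NN}$ in $L^{p'}(0,T;Y)$ for any Banach space $Y$ with $L^2(\O)\hookrightarrow\hookrightarrow Y\hookrightarrow W^{-1,p'}(\O)$. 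In cases (i) and (ii), where $p>\frac{2d}{d+2}$, the Rellich--Kondrachov embedding $W^{1,p}_0(\O)\hookrightarrow\hookrightarrow L^2(\O)$ dualises to $L^2(\O)\hookrightarrow\hookrightarrow W^{-1,p'}(\O)$, so I can take $Y=W^{-1,p'}(\O)$. Identifying the limit via the weak $L^2$-convergence, $g_n\to g$ strongly in $L^{p'}(0,T;W^{-1,p'}(\O))$.

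Next, for a smooth test function $\varphi\in C^\infty_c(\O\times(0,T))$, the product $\varphi f_n$ belongs to $L^p(0,T;W^{1,p}_0(\O))$ because $\varphi$ is spatially compactly supported in $\O$; the product rule $\nabla(\varphi f_n)=\varphi\nabla f_n+(\nabla\varphi)f_n$ together with the weak $L^p$-convergences of $f_n$ and $\nabla f_n$ yields $\varphi f_n\weakto\varphi f$ weakly in $L^p(0,T;W^{1,p}_0(\O))$. A weak--strong duality pairing then gives
\begin{equation*}
\int_0^T\!\int_\O \varphi f_n g_n\ud x\ud t = \int_0^T\langle g_n,\varphi f_n\rangle_{W^{-1,p'},W^{1,p}_0}\ud t \longrightarrow \int_0^T\!\int_\O \varphi fg\ud x\ud t,
\end{equation*}
which is \eqref{lem:compcomp.conv} for $\varphi\in C^\infty_c(\O\times(0,T))$.

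To extend this to arbitrary $\varphi\in C(\overline\O\times[0,T])$, I would show that any weak-$*$ limit of $(f_ng_n)_{n\in\NN}$ in the space of finite Borel measures on $\overline\O\times[0,T]$ is absolutely continuous with respect to Lebesgue, hence coincides with $fg\ud x\ud t$ by the previous paragraph. Introducing cutoffs $\eta_\delta\in C^\infty_c(\O\times(0,T))$ equal to one outside a $\delta$-neighbourhood of the parabolic boundary, the task reduces to estimating $\int|f_ng_n|(1-\eta_\delta)\ud x\ud t$ uniformly in $n$. Hardy's inequality for $W^{1,p}_0$ functions controls $\int_0^T\!\int_{\{d(x,\partial\O)<\delta\}}|f_n|^p\ud x\ud t=O(\delta^p)$ uniformly, which paired via H\"older with the $L^{p'}(L^2)$-bound on $g_n$ handles the lateral boundary; the equi-continuity of $g_n:[0,T]\to W^{-1,p'}(\O)$ inherited from the derivative bound handles the temporal boundary.

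The hard part will be case (iii), where $p\le\frac{2d}{d+2}$ invalidates $W^{1,p}_0(\O)\hookrightarrow L^2(\O)$ and therefore the key duality $L^2(\O)\hookrightarrow\hookrightarrow W^{-1,p'}(\O)$ used above. The extra $L^\infty$-bound on $(f_n)_{n\in\NN}$ is essential here: I would re-run the Aubin--Simon step with a larger target space $Y$ (for instance $Y=H^{-s}(\O)$ for a suitably small $s>0$, so that $L^2(\O)\hookrightarrow\hookrightarrow Y$ still holds), and then pair the strong convergence of $g_n$ in $L^{p'}(0,T;Y)$ against $\varphi f_n\in L^\infty(\O\times(0,T))$ by $L^1$--$L^\infty$ duality after establishing a continuous embedding $Y\hookrightarrow L^1(\O)$. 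Balancing $s$ against $p$ and the target integrability so that every step closes is the delicate point of the proof.
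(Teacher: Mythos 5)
Your treatment of cases (i)--(ii) is essentially the paper's argument (Aubin--Simon with $V=L^2(\O)$ compactly embedded in $W^{-1,p'}(\O)$ by dualising $W^{1,p}_0(\O)\hookrightarrow\hookrightarrow L^2(\O)$, then a weak--strong pairing), but your plan for case (iii) cannot work as stated. You propose to find $Y$ with $L^2(\O)\hookrightarrow\hookrightarrow Y\hookrightarrow L^1(\O)$ so as to convert the Aubin--Simon compactness of $(g_n)_{n\in\NN}$ into strong $L^{p'}(0,T;L^1(\O))$ convergence and then use $L^1$--$L^\infty$ duality. No such $Y$ exists: a compact embedding followed by a continuous one is compact, so this would force $L^2(\O)\hookrightarrow L^1(\O)$ to be compact, which is false (oscillating sequences converge weakly to $0$ in $L^2$ with $L^1$ norms bounded away from $0$); in particular $H^{-s}(\O)$ certainly does not embed into $L^1(\O)$. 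The missing idea is to upgrade $f_n$ rather than $g_n$: the $L^\infty$ bound combined with the $W^{1,p}_0$ bound places $(f_n)_{n\in\NN}$ in $L^p(0,T;W^{s,2}_0(\O))$ for $2s<p$ (this is exactly the interpolation Lemma \ref{lem:interpWL}), after which Aubin--Simon with $V=L^2(\O)$, $E=W^{-s,2}(\O)$, $F=W^{-s,2}(\O)+W^{-1,p'}(\O)$ gives strong convergence of $g_n$ in $L^{p'}(0,T;W^{-s,2}(\O))$, and one concludes by the $W^{-s,2}/W^{s,2}_0$ duality pairing.

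Your reduction of the test function is also an unnecessary detour that opens real gaps. By restricting to $\varphi\in C^\infty_c(\O\times(0,T))$ you are forced to exclude concentration of $f_ng_n$ at the parabolic boundary, and the tools you invoke do not suffice under the lemma's hypotheses: Hardy's inequality for $W^{1,p}_0(\O)$ requires some regularity (e.g.\ $p$-fatness) of $\RR^d\setminus\O$ when $p\le d$, whereas the lemma assumes only that $\O$ is open and bounded; and equicontinuity of $g_n:[0,T]\to W^{-1,p'}(\O)$ does not by itself control the $L^1$ mass of $f_ng_n$ on thin time slabs near $t=0,T$ (in case (i) with $p=2$ even the crude H\"older-in-time gain disappears). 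The paper's route avoids all of this: since $f_n(\cdot,t)\in W^{1,p}_0(\O)$ already has zero trace, multiplication by any $\varphi$ that is merely smooth on $\overline\O\times[0,T]$ keeps $\varphi f_n$ in $L^p(0,T;W^{1,p}_0(\O))$ with the same bounds and weak convergence (and the same $L^\infty$ bound in case (iii)), so one absorbs $\varphi$ into $f_n$, reduces to $\varphi\equiv 1$, and handles general $\varphi\in C(\overline\O\times[0,T])$ by uniform density together with the uniform $L^1$ bound on $f_ng_n$. You should adopt that reduction and the interpolation step; as written, case (iii) and the boundary-concentration step are genuine gaps.
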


\begin{remark} This result is clearly not optimal and the conclusion holds under
much weaker assumptions. Using for example the ideas of
\cite{dey14}, which consists of reducing the proof to the case where
$(f_n)_{n\in\NN}$ are tensorial functions, we could establish a convergence result for
$(f_ng_n)_{n\in\NN}$ under
weaker bounds on the functions, and assuming only space-translate estimates
of $(f_n)_{n\in\NN}$ instead of bounds in a Lebesgue--Sobolev space. We establish 
only this simpler lemma that is adapted precisely to our setting, and emphasise 
that we make no claim over the originality of its core idea.
\end{remark}

\begin{remark}[$p$ small] If $p$ is too small, then an additional assumption
on $(f_n)_{n\in\NN}$ is mandatory, as the following example shows.

If $p\le \frac{2d}{d+2}$ then
$p^*\le 2$ and $W^{1,p}_0(\O)$ is therefore not compactly embedded in $L^2(\O)$.
Take a sequence $(u_n)_{n\in\NN}$ that is bounded in $W^{1,p}_0(\O)\cap L^2(\O)$
that converges weakly but not strongly to some $u\in L^2(\O)$. Set $f_n(x,t)=g_n(x,t)=u_n(x)$
and $f(x,t)=g(x,t)=u(x)$. Then $f_n\weakto f$ and $g_n\weakto g$ weakly in $L^\infty(0,T;L^2(\O))$, $(f_n)_{n\in\NN}$ is bounded in $L^\infty(0,T;W^{1,p}_0(\O))$
and $\partial_t g_n=0$, but the convergence of $\int_{\O\times (0,T)}f_ng_n$ to $\int_{\O
\times (0,T)}fg$ would imply that $\norm{u_n}_{L^2(\O)}\to \norm{u}_{L^2(\O)}$.
Hence, $u_n$ would converge strongly to $u$ in $L^2(\O)$, which is a contradiction.

\end{remark}

\begin{proof}

By density of $C^\infty(\overline{\O}\times [0,T])$ in $C(\overline{\O}\times [0,T])$,
we only need to prove the result for $\varphi$ smooth. Replacing $(f_n)_{n\in\NN}$
with $(\varphi f_n)_{n\in\NN}$, which has the same bound and convergence properties 
as $(f_n)_{n\in\NN}$, we can actually assume that $\varphi=1$ and we only have to prove
\begin{equation}\label{lem:compcomp.conv2}
\int_0^T\int_\O f_n(x,t)g_n(x,t)\ud x\ud t\to 
\int_0^T\int_\O f(x,t)g(x,t)\ud x\ud t\mbox{ as $n\to\infty$}. 
\end{equation}

We recall a classical consequence of Aubin--Simon's theorem \cite{BF,poly}:
\emph{assume that $V$, $E$ and $F$ are Banach spaces such that $V$ is compactly embedded in $E$ and
$E$ is continuously embedded in $F$; if $(w_n)_{n\in\NN}$ is bounded in $L^r(0,T;V)$
and $(\partial_t w_n)_{n\in\NN}$ is bounded in $L^m(0,T;F)$ for some $r,m\in (1,\infty]$,
then $(w_n)_{n\in\NN}$ is relatively compact in $L^r(0,T;E)$.}

We first consider Cases (\ref{case1}) and (\ref{case2}).
In both cases, $p^*>2$ and thus $W^{1,p}_0(\O)$ is compactly embedded in $L^2(\O)$.
By duality, we infer that $V=L^2(\O)$ is compactly embedded in $E=F=W^{-1,p'}(\O)$.
Since $(g_n)_{n\in\NN}$ is bounded in $L^{p'}(0,T;V)$ (in Case (\ref{case1}),
we use the fact that $p'\le 2$), and $(\partial_t g_n)_{n\in\NN}$ is
bounded in $L^{p'}(0,T;W^{-1,p'}(\O))$, the Aubin--Simon theorem shows that
$(g_n)_{n\in\NN}$ is relatively compact in $L^{p'}(0,T;W^{-1,p'}(\O))$, and that
its convergence to $g$ is strong in this space. 
Since $(f_n)_{n\in\NN}$ is bounded in $L^{p}(0,T;W^{1,p}_0(\O))$,
its convergence to $f$ also holds weakly in this space.
Observe that
\begin{equation*}
\int_0^T\int_\O f_n(x,t)g_n(x,t)\ud x\ud t=
\int_0^T \langle g_n(t),f_n(t)\rangle_{W^{-1,p'},W^{1,p}_0}\ud t,
\end{equation*}
so the convergence \eqref{lem:compcomp.conv2} holds by strong/weak convergence.

We now consider Case (\ref{case3}). Fix $s\in (0,1)$
such that $2s<p$. By the assumptions on $(f_n)_{n\in\NN}$ and
Lemma \ref{lem:interpWL} below, the sequence $(f_n)_{n\in\NN}$ is
bounded in $L^p(0,T;W^{s,2}_0(\O))$,
and thus converges weakly in this space to $f$.
Since $s>0$, $W^{s,2}_0(\O)$ is compactly embedded in $L^2(\O)$
(we use \cite[Theorem 7.1]{NPV11} with the extension $W^{s,2}_0(\O)\to W^{s,2}(\RR^d)$
by $0$ outside $\O$, which is valid since $W^{s,2}_0(\O)$ is the closure in $W^{s,2}(\O)$
of compactly supported functions). Dually, $V=L^2(\O)$ is compactly embedded in 
$E=W^{-s,2}(\O)$. Set $F=W^{-s,2}(\O)+W^{-1,p'}(\O)$,
and apply the Aubin--Simon theorem to see that $(g_n)_{n\in\NN}$
is relatively compact in $L^{p'}(0,T;W^{-s,2}(\O))$. The weak convergence 
of $(f_n)_{n\in\NN}$ in $L^p(0,T;W^{s,2}_0(\O))$ therefore allows us to pass to the 
weak/strong limit in \eqref{lem:compcomp.conv2} as above. \end{proof}

The following lemma is a simple interpolation result between $W^{1,p}_0(\O)$ and 
$L^\infty(\O)$.

\begin{lemma}[Interpolation estimate]\label{lem:interpWL}
Let $\O$ be a bounded open subset of $\RR^d$
and $p\in (1,\infty)$. If $s\in (0,1)$ and $q\in (p,\infty)$ are such that $sq<p$,
then there exists $\ctel{cst:interpWL}$ such that for all $w\in W^{1,p}_0(\O)$
\begin{equation*}
\forall w\in W^{1,p}_0(\O)\cap L^\infty(\O)\,,\;
||w||_{W^{s,q}_0(\O)}\le \cter{cst:interpWL}||w||_{L^\infty(\O)}^{1-\frac{p}{q}}
||w||_{W^{1,p}_0(\O)}^{\frac{p}{q}},
\end{equation*}
where $W^{s,q}_0(\O)$ is the closure in $W^{s,q}(\O)$ (for the norm defined in
the proof) of $C^\infty_c(\O)$.
\end{lemma}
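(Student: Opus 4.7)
The plan is to split the norm $\|w\|_{W^{s,q}(\O)}^q=\|w\|_{L^q(\O)}^q+[w]_{W^{s,q}(\O)}^q$ (where the second term is the Gagliardo seminorm) and estimate each piece separately by the product $\|w\|_{L^\infty(\O)}^{q-p}\|w\|_{W^{1,p}_0(\O)}^{p}$, then take a $q$-th root.

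For the $L^q$-part, I would use the elementary pointwise bound $|w|^q\le \|w\|_{L^\infty(\O)}^{q-p}|w|^p$ and then the Poincar\'e inequality on the bounded domain $\O$ to get $\|w\|_{L^p(\O)}\le C\|w\|_{W^{1,p}_0(\O)}$. For the Gagliardo seminorm, I would write
\begin{equation*}
|w(x)-w(y)|^q = |w(x)-w(y)|^{q-p}\,|w(x)-w(y)|^p \le (2\|w\|_{L^\infty(\O)})^{q-p}\,|w(x)-w(y)|^p,
\end{equation*}
so that
\begin{equation*}
[w]_{W^{s,q}(\O)}^q \le (2\|w\|_{L^\infty(\O)})^{q-p}\int_\O\!\!\int_\O \frac{|w(x)-w(y)|^p}{|x-y|^{d+sq}}\ud x\ud y.
\end{equation*}
Setting $\sigma:=sq/p$, the remaining double integral is exactly $[w]_{W^{\sigma,p}(\O)}^p$, with $\sigma\in(0,1)$ by the hypothesis $sq<p$. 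The classical embedding $W^{1,p}_0(\O)\hookrightarrow W^{\sigma,p}(\O)$ for $\sigma\in(0,1)$ (proved, e.g., by writing $w(x)-w(y)$ as the integral of $\nabla w$ along the segment joining $x$ and $y$, or by a direct truncation of the integral combined with $|w(x)-w(y)|\le |x-y|^{1-\theta}\cdot |w(x)-w(y)|^\theta \cdot \ldots$) then yields $[w]_{W^{\sigma,p}(\O)}^p\le C\|w\|_{W^{1,p}_0(\O)}^p$. Combining the two pieces and taking the $1/q$ power produces the desired interpolation inequality for any smooth $w$, hence for any $w\in W^{1,p}_0(\O)\cap L^\infty(\O)$ once density is invoked.

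To handle the $W^{s,q}_0(\O)$ membership (not merely the norm bound), I would select a sequence $(w_n)\subset C^\infty_c(\O)$ with $w_n\to w$ in $W^{1,p}_0(\O)$ and, by a standard truncation argument, with $\|w_n\|_{L^\infty(\O)}\le \|w\|_{L^\infty(\O)}+1$ uniformly in $n$. Applying the estimate already established to the differences $w_n-w_m$ gives
\begin{equation*}
\|w_n-w_m\|_{W^{s,q}(\O)}\le C\,(2\|w\|_{L^\infty(\O)}+2)^{1-p/q}\,\|w_n-w_m\|_{W^{1,p}_0(\O)}^{p/q},
\end{equation*}
so $(w_n)$ is Cauchy in $W^{s,q}(\O)$; its limit in $W^{s,q}(\O)$ must coincide with $w$ (by uniqueness of limits in $L^p(\O)$), which shows $w\in W^{s,q}_0(\O)$ and, passing to the limit in the estimate for $w_n$, delivers the claimed bound with the stated constant.

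The main obstacle I expect is the embedding $W^{1,p}_0(\O)\hookrightarrow W^{\sigma,p}(\O)$ for $\sigma\in(0,1)$; although standard (and available in \cite{NPV11}), invoking it cleanly for a bounded Lipschitz-type domain $\O$ is where most of the work is hidden. Everything else is elementary H\"older/interpolation arithmetic and a routine density argument.
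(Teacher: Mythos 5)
Your proposal is correct and follows essentially the same route as the paper: the same split of $\|w\|_{W^{s,q}_0(\O)}^q$ into the $L^q$ part and the Gagliardo seminorm, and the same pointwise H\"older trick pulling out $\|w\|_{L^\infty(\O)}^{q-p}$ from each piece. The only difference is cosmetic: where you invoke the classical embedding $W^{1,p}_0(\O)\hookrightarrow W^{\sigma,p}(\O)$ with $\sigma=sq/p$, the paper proves exactly that bound directly in two lines, via the change of variable $y=x+\xi$, the zero-extension translate estimate $\int_\O|w(x+\xi)-w(x)|^p\ud x\le|\xi|^p\|\nabla w\|_{L^p(\O)^d}^p$, and the integrability of $|\xi|^{p-d-sq}$ on a ball of radius $\diam{\O}$ (which is where $sq<p$ enters); your closing density argument for membership in $W^{s,q}_0(\O)$ is a harmless supplement that the paper leaves implicit.
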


\begin{proof} %Since $sq<p$ we can find $\theta\in (0,1)$ such that $sq=\theta p$.
We write, using the change of variable $y=x+\xi$,
\begin{align*}
||w||_{W^{s,q}_0(\O)}^q&=\int_\O |w(x)|^q\ud x + \int_\O\int_\O\frac{|w(x)-w(y)|^q}{|x-y|^{d+sq}}\ud x\ud y\\
&\le ||w||_{L^\infty(\O)}^{q-p}||w||_{L^p(\O)}^p +2||w||_{L^\infty(\O)}^{q-p}
\int_\O\int_\O\frac{|w(x)-w(y)|^p}{|x-y|^{d+sq}}\ud x\ud y\\
&\le ||w||_{L^\infty(\O)}^{q-p}||w||_{L^p(\O)}^p +2||w||_{L^\infty(\O)}^{q-p}
\int_{\O-\O}|\xi|^{-d-sq}\left(\int_\O |w(x+\xi)-w(x)|^p\ud x\right)\ud\xi.
\end{align*}
But $\int_\O |w(x+\xi)-w(x)|^p\ud x\le |\xi|^p||\nabla w||_{L^p(\O)^d}^p$ and
$\O-\O\subset B(0,D)$ where $D$ is the diameter of $\O$. Hence
\[
||w||_{W^{s,q}_0(\O)}^q
\le ||w||_{L^\infty(\O)}^{q-p}||w||_{L^p(\O)}^p +2||w||_{L^\infty(\O)}^{q-p}
||\nabla w||_{L^p(\O)^d}^p\int_{B(0,D)}|\xi|^{p-d-sq}\ud\xi\\
\]
and the proof is complete since $p-d-sq>-d$. \end{proof}

% --- UNIQUENESS RESULT ---

\section{A uniqueness result} \label{app:unicite}

We state and prove the uniqueness of a solution to \eqref{eq:model} when $p=2$ and
\begin{equation}
\begin{aligned} 
& a(x,\nu(u),\nabla\zeta(u)) = \Lambda(x) \nabla\zeta(u) &\mbox{in } \Omega,
\end{aligned} \label{eq:modelsimp}
\end{equation}
under the hypothesis that
\begin{equation}
\begin{array}{llllll}\displaystyle
\Lambda \hbox{ is a measurable function from } \Omega  \hbox{ to } {{ \mathcal M}_d(\RR)}\mbox{ and }\\
\mbox{ there exists $\underline{\lambda},\overline{\lambda}>0$ such that, for a.e. $ x\in\Omega$,}\\
\mbox{ $\Lambda( x)$ is symmetric with eigenvalues in $[\underline{\lambda},\overline{\lambda}]$.}
\end{array}
\label{hyplambda}
\end{equation}
J. Carillo \cite{carr1999} gave a proof, based on the doubling variable technique, of the uniqueness of entropy solutions to 
$\partial_t \beta(u) - \Delta\zeta(u) = f$ (with an additional convective term). Although this could be extended to our framework,
we provide here another proof which is shorter and simpler, using the idea due to J. Hadamard \cite{had23} of solving the dual problem. 
This idea has been succesfully used in the case of the one-dimensional Stefan problem \cite{bkp85}, 
and subsequently generalized to the higher dimensional case \cite{ghp97}.
 
Note that this uniqueness result applies to the equivalent maximal monotone graph formulation \eqref{eq:modelmonot}--\eqref{eq:modelsimp}--\eqref{hyplambda}.

% uniqueness theorem 
\begin{theorem}\label{parnlthunicite}
Under Hypotheses  \eqref{assumptions}, \eqref{eq:modelsimp} and \eqref{hyplambda}, let $u_1$ and $u_2$ be two solutions to \eqref{eq:solution} in the sense of Definition \ref{def:solution}. Then $\beta(u_1) = \beta(u_2)$ and $\zeta(u_1) = \zeta(u_2)$.
\end{theorem}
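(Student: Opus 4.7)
The plan is to exploit the time-independence of $\Lambda$ by integrating the difference equation once in time, thereby reducing uniqueness to an elliptic identity. Let $u_1,u_2$ be two solutions of \eqref{eq:solution} with the same data, and set
\[
\chi:=\beta(u_1)-\beta(u_2),\qquad \phi:=\zeta(u_1)-\zeta(u_2).
\]
Definition~\ref{def:solution} with $p=2$ yields $\chi\in C([0,T];L^2(\Omega)\text{-w})$ with $\chi(\cdot,0)=0$, $\phi\in L^2(0,T;H^1_0(\Omega))$, and $\partial_t\chi\in L^2(0,T;H^{-1}(\Omega))$. Subtracting the two weak formulations, using \eqref{eq:modelsimp}, gives
\begin{equation}\label{plan:diff}
\partial_t\chi-\dive(\Lambda\nabla\phi)=0 \quad\text{in } L^2(0,T;H^{-1}(\Omega)).
\end{equation}
Since $\beta$ and $\zeta$ are both nondecreasing, $\chi$ and $\phi$ have the same sign pointwise, so $\chi\phi\ge 0$ almost everywhere on $\Omega\times(0,T)$.

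The key step is to introduce the primitive
\[
P(x,t):=\int_0^t \phi(x,s)\,ds,
\]
which belongs to $C([0,T];H^1_0(\Omega))$ with $\partial_tP=\phi$ and $P(\cdot,0)=0$. Integrating \eqref{plan:diff} in time from $0$ to $t$ as an identity in $H^{-1}(\Omega)$ and exchanging the Bochner integral with the continuous linear map $\dive(\Lambda\nabla\,\cdot\,)\colon H^1_0(\Omega)\to H^{-1}(\Omega)$, we obtain the pointwise-in-time elliptic identity
\begin{equation}\label{plan:elliptic}
\chi(\cdot,t)=\dive(\Lambda\nabla P(\cdot,t))\quad\text{in } H^{-1}(\Omega)\text{ for every } t\in[0,T].
\end{equation}

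Using \eqref{plan:elliptic}, the symmetry of $\Lambda$ and the fact that $\Lambda$ depends only on $x$, we compute for a.e.\ $t$
\[
\langle\chi(t),\phi(t)\rangle_{H^{-1},H^1_0}
= -\int_\Omega \Lambda\nabla P(t)\cdot\nabla\partial_tP(t)\,dx
= -\tfrac{1}{2}\tfrac{d}{dt}\!\int_\Omega\Lambda\nabla P(t)\cdot\nabla P(t)\,dx.
\]
Integrating over $[0,T_0]$ and using $P(\cdot,0)=0$ gives, for every $T_0\in(0,T]$,
\[
\int_0^{T_0}\!\!\int_\Omega \chi\phi\,dx\,dt = -\tfrac{1}{2}\int_\Omega\Lambda\nabla P(\cdot,T_0)\cdot\nabla P(\cdot,T_0)\,dx.
\]
The left-hand side is $\ge 0$ by the sign agreement of $\chi$ and $\phi$, while the right-hand side is $\le 0$ by the coercivity in \eqref{hyplambda}. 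Both must therefore vanish, which forces $\nabla P(\cdot,T_0)=0$; combined with $P(\cdot,T_0)\in H^1_0(\Omega)$ this gives $P(\cdot,T_0)=0$. Since $T_0$ was arbitrary, $P\equiv 0$, and differentiating in time yields $\phi=0$ a.e., i.e.\ $\zeta(u_1)=\zeta(u_2)$. Re-injecting $\phi=0$ into \eqref{plan:diff} with $\chi(\cdot,0)=0$ yields $\chi\equiv 0$, i.e.\ $\beta(u_1)=\beta(u_2)$.

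The main delicate points will be justifying the exchange of $\dive(\Lambda\nabla\,\cdot\,)$ with the Bochner time integral to obtain \eqref{plan:elliptic}, and the chain-rule identity for $t\mapsto\int_\Omega \Lambda|\nabla P(t)|^2\,dx$ under the regularity $\partial_tP=\phi\in L^2(0,T;H^1_0)$. Both rely crucially on the \emph{linearity} of the spatial operator and its \emph{independence of $t$}, which is exactly the structural content of the hypothesis \eqref{eq:modelsimp}; this is what makes the argument ``short and simple'' here, while the fully nonlinear-in-gradient case requires the more sophisticated doubling-variables technique of Carrillo.
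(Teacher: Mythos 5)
Your proof is correct, and it takes a genuinely different route from the paper. You use the classical time-primitive (duality-free) trick: set $P(\cdot,t)=\int_0^t(\zeta(u_1)-\zeta(u_2))(\cdot,s)\,\mathrm{d}s$, integrate the difference equation in time to get $\chi(t)=\dive(\Lambda\nabla P(t))$ in $H^{-1}(\Omega)$, and test against $\phi=\partial_t P$ so that the monotonicity of $\beta$ and $\zeta$ (which gives $\chi\phi\ge 0$ a.e.) collides with the energy identity $\int_0^{T_0}\langle\chi,\phi\rangle\,\mathrm{d}t=-\tfrac12\int_\Omega\Lambda\nabla P(T_0)\cdot\nabla P(T_0)\,\mathrm{d}x\le 0$. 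The two ``delicate points'' you flag are indeed the only ones, and both are standard: $\dive(\Lambda\nabla\cdot)$ is a bounded linear map $H^1_0(\Omega)\to H^{-1}(\Omega)$, hence commutes with the Bochner integral, and $P\in H^1(0,T;H^1_0(\Omega))$ makes $t\mapsto\int_\Omega\Lambda\nabla P\cdot\nabla P\,\mathrm{d}x$ absolutely continuous with the expected derivative (symmetry of $\Lambda$ from \eqref{hyplambda} is used here). The paper instead follows Hadamard's duality method: it forms the ratio $q=(\zeta(u_1)-\zeta(u_2))/u_d$ with $u_d=\beta(u_1)+\zeta(u_1)-\beta(u_2)-\zeta(u_2)$, regularises it to $q_\eps\in[\eps,1-\eps]$, and solves a backward dual parabolic problem $(1-q_\eps)\partial_t\psi_\eps+q_\eps\,\dive(\Lambda\nabla\psi_\eps)=w$, which requires the auxiliary existence/regularity results of Lemmas \ref{parnlauxi} and \ref{parnlauxi2} before letting $\eps\to0$. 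Your argument is shorter and avoids that machinery entirely, at the price of relying just as crucially on the linearity, symmetry and time-independence of $\Lambda$ (which the paper's dual-problem estimates also exploit); the duality route is the one that is traditionally more flexible, e.g.\ towards convection terms or entropy-type formulations as in the cited Stefan-problem literature. One small stylistic remark: once you know $P\equiv 0$, the identity $\chi(t)=\dive(\Lambda\nabla P(t))$ already gives $\chi\equiv 0$ directly, so the final re-injection into the difference equation is not even needed.
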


\begin{remark}If $\beta$ and $\zeta$ do not have any common plateau, as a corollary
of this theorem we see that $u_1=u_2$. Otherwise,
Theorem \ref{parnlthunicite} is optimal. Indeed, whenever a solution $u$ takes a value in a common
plateau of $\beta$ and $\zeta$,
we can change this value into any other value in the same plateau without changing the fact
that $u$ is a solution.
\end{remark}

\begin{proof}
Set $u_d = \beta(u_1) + \zeta(u_1) - \beta(u_2)- \zeta(u_2)$, and 
for all $( x,t)\in \O\times [0,T]$, define 
\[
q( x,t) = \left\{\begin{array}{ll}\frac{\zeta(u_1(x,t)) - \zeta(u_2(x,t))}{u_d(x,t)}&\mbox{ if
$u_d( x,t) \neq 0$},\\
0&\mbox{ otherwise}.\end{array}\right.
\]
Take
$\psi \in L^2(0,T;H^1_0(\O))$ with  $\partial_t \psi\in L^2(\O\times(0,T))$, $\psi(\cdot,T)=0$
and $\dive(\Lambda\nabla \psi)\in L^2(\O\times(0,T))$. Approximating
$\psi$ in time by smooth functions, we see that 
\[
\int_0^T \langle \partial_t\beta(u_i),\psi\rangle
=-\int_\O \beta(\uini)(x)\psi(x,0)\ud x-\int_0^T \beta(u_i)(x,t)\partial_t \psi(x,t)\ud x\ud t.
\] 
Then \eqref{eq:solution} gives
\begin{equation}
\int_0^T\int_\Omega u_d( x,t) \Bigl((1-q(x,t)) \partial_t \psi( x,t) + q( x,t) \dive(\Lambda\nabla \psi)( x,t)\Bigr) \ud x \ud t  = 0.
\label{parnlun1}\end{equation}
For $\eps\in (0,1/2)$, denote $q_\eps = (1-2\eps)q+ \eps$. Since $0 \le q\le 1$ we have
$\eps \leq q_\eps \leq 1-\eps$
and
\begin{equation}
 \frac{(q_\eps-q)^2} {q_\eps} \le  \eps\quad \hbox{ and }\quad\frac{(q_\eps-q)^2} {1 -q_\eps}\le  \eps.
\label{parnlun3}\end{equation}
Let $\psi_\eps$ be given by Lemma \ref{parnlauxi} below, with $g=q_\eps$
and some $w\in C^\infty_ c(\O\times (0,T))$.
Substituting $\psi$ by $\psi_\eps$ in (\ref{parnlun1}) and using  \eqref{parnllemme1},
\begin{multline}
\left| \int_0^T\int_\Omega u_d( x,t) w( x,t)\ud x \ud t\right|\\ 
\le \left|\int_0^T\int_\Omega u_d( x,t)(q_\eps( x,t)-q( x,t)) (\dive(\Lambda\nabla \psi_\eps)( x,t) - \partial_t \psi_\eps( x,t))\ud x \ud t \right|.
\label{parnlun10}\end{multline}
The Cauchy-Schwarz inequality, \eqref{parnllemme5} and  \eqref{parnlun3} imply that
\begin{multline}
\Bigg[\int_0^T \int_\Omega u_d(x,t)(q_\eps(x,t) - q(x,t))(\dive(\Lambda\nabla \psi_\eps)(x,t) - \partial_t \psi_\eps(x,t))\ud x\ud t\Bigg]^2 \\
\leq 2\left(\int_0^T \int_\Omega u_d( x,t)^2 \frac{(q(x,t)-q_\eps(x,t))^2}{q_\eps(x,t)} \ud x\ud t\right)
\left(\int_0^T \int_\Omega q_\eps(x,t)\Bigl(\dive(\Lambda\nabla\psi_\eps)(x,t)\Bigr)^2 \ud x\ud t \right)\\ 
+ 2\left(\int_0^T \int_\Omega u_d(x,t)^2\frac{(q(x,t)-q_\eps(x,t))^2}{1 - q_\eps(x,t)} \ud x\ud t \right)
\left(\int_0^T \int_\Omega (1-q_\eps(x,t))\Bigl(\partial_t\psi_\eps(x,t)\Bigr)^2 \ud x \ud t\right) \\
\leq \eps C_0\left(\norm{\nabla w}_{L^2(\O\times(0,T))}^2 + \norm{w}_{L^2(\O\times(0,T))}^2  
+ \norm{\partial_t w}_{L^2(\O\times(0,T))}^2\right)\int_0^T \int_\Omega u_d(x,t)^2 \ud x \ud t.
\label{parnlun11}
\end{multline}
The right-hand side
of (\ref{parnlun11}) vanishes as $\eps\to 0$, and therefore so does
left-hand side of (\ref{parnlun10}), giving
\[
 \int_0^T\int_\Omega u_d( x,t) w( x,t) \ud x \ud t  =0.
\]
Since this holds for any function
$w\in C^\infty_ c(\O\times (0,T))$, we get that $u_d( x,t) = 0$ for a.e.\ $( x,t)\in\Omega\times (0,T)$. Hence $\beta(u_1)-\beta(u_2)=-(\zeta(u_1)-\zeta(u_2))$ and,
since $\beta$ and $\zeta$ are non-decreasing, the proof of the theorem is complete.
\end{proof}

The following lemma ensures the existence of the function $\psi$, used in the proof of Theorem \ref{parnlthunicite}. 

% lemma for above theorem
\begin{lemma}\label{parnlauxi}
Let $T>0$, and let $\O$ be a bounded open subset of $\RR^d$ ($d\in\NN$). Assume Hypothesis \eqref{hyplambda}. Let $w\in C^\infty_c(\Omega\times (0,T))$ and $g\in L^\infty(\O\times(0,T))$ such that 
$g( x,t)\in [{g_{\rm min}},1 -{g_{\rm min}}]$ for a.e.\ $( x,t)\in \O\times(0,T)$,
where ${g_{\rm min}}$ is a fixed number in $(0,\frac 1 2)$. Then there exists a function $\psi$ such that:
\begin{enumerate}[(i)]
\item $\psi\in L^\infty(0,T;H^1_0(\O))$, $\partial_t \psi\in L^2(\O\times(0,T))$, $\dive(\Lambda\nabla \psi)\in L^2(\O\times(0,T))$ (this implies $\psi\in C([0,T];L^2(\O))$),
\item $\psi(\cdot,T) = 0$,
\item $\psi$ satisfies
\begin{equation}
(1 - g(x,t)) \partial_t \psi( x,t) + g( x,t) \dive(\Lambda\nabla \psi)( x,t) = w( x,t)
\quad\mbox{for a.e.\ $(x,t)\in\O\times(0,T)$,}\label{parnllemme1}
\end{equation}
\item there exists $C_0>0$, depending only on $T$, $\diam{\O}$, $\underline{\lambda}$ and $\overline{\lambda}$ (and \emph{not} on $g_{\rm min}$), such that
\begin{multline}
\int_0^T \int_\Omega \left((1 - g( x,t)) \Bigl(\partial_t \psi( x,t)\Bigr)^2 + g( x,t) \Bigl(\dive(\Lambda\nabla\psi)( x,t)\Bigr)^2 \right)\ud x \ud t\\
\le  C_0\left(\norm{\nabla w}_{L^2(\O\times(0,T))}^2 + \norm{w}_{L^2(\O\times(0,T))}^2  + \norm{\partial_t  w}_{L^2(\O\times(0,T))}^2\right).
\label{parnllemme5}\end{multline}
\end{enumerate}
\end{lemma}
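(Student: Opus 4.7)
The first step is to construct $\psi$ by solving the backward parabolic Cauchy--Dirichlet problem \eqref{parnllemme1} via time reversal. Setting $\tilde\psi(x,t):=\psi(x,T-t)$, $\tilde g(x,t):=g(x,T-t)$ and $\tilde w(x,t):=w(x,T-t)$, the equation becomes
\[
\partial_t \tilde\psi-\frac{\tilde g}{1-\tilde g}\dive(\Lambda\nabla\tilde\psi)=-\frac{\tilde w}{1-\tilde g},\qquad \tilde\psi(\cdot,0)=0,\qquad \tilde\psi|_{\partial\O}=0.
\]
The coefficient $\tilde g/(1-\tilde g)$ is bounded above and bounded away from zero (by constants that depend on $g_{\min}$), so this is a standard non-degenerate linear parabolic equation. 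Its solvability with the regularity $\tilde\psi\in L^\infty(0,T;H^1_0(\O))$, $\partial_t\tilde\psi\in L^2(\O\times(0,T))$ and $\dive(\Lambda\nabla\tilde\psi)\in L^2(\O\times(0,T))$ follows from a Galerkin approximation together with classical parabolic regularity theory (see \cite{lio65}). Reverting the time change yields $\psi$ with properties (i)--(iii).

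To establish \eqref{parnllemme5} with $C_0$ independent of $g_{\min}$, I would test \eqref{parnllemme1} against the tailored multiplier $\partial_t\psi+\dive(\Lambda\nabla\psi)$ over $\O\times(t_0,T)$ for an arbitrary $t_0\in[0,T]$, obtaining
\[
\int_{t_0}^T\!\int_\O\!\big[(1-g)(\partial_t\psi)^2+g(\dive(\Lambda\nabla\psi))^2\big]
+\int_{t_0}^T\!\int_\O \partial_t\psi\,\dive(\Lambda\nabla\psi)
=\int_{t_0}^T\!\int_\O w[\partial_t\psi+\dive(\Lambda\nabla\psi)].
\]
Spatial integration by parts together with recognition as a total time derivative identifies the cross term with $\tfrac12\int_\O|\nabla\psi(x,t_0)|^2_\Lambda\,\ud x$ (using $\psi(\cdot,T)=0$), a non-negative quantity. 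On the right, the compact support of $w$ in $\O\times(0,T)$ and integration by parts yield $-\int_\O w(x,t_0)\psi(x,t_0)\,\ud x - \int_{t_0}^T\!\int_\O \partial_t w\,\psi - \int_{t_0}^T\!\int_\O \nabla w\cdot\Lambda\nabla\psi$, controlled by Cauchy--Schwarz, the spatial Poincar\'e inequality (applicable since $\psi(\cdot,t)\in H^1_0(\O)$), and Young's inequality, with a small fraction of $\int|\nabla\psi(t_0)|^2_\Lambda$ absorbed via $|\nabla\psi|^2_\Lambda\ge\underline{\lambda}|\nabla\psi|^2$.

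At $t_0=0$ the boundary term $w(\cdot,0)\psi(\cdot,0)$ vanishes thanks to the compact temporal support of $w$, and the resulting inequality bounds $\int[(1-g)(\partial_t\psi)^2+g(\dive(\Lambda\nabla\psi))^2]$ by $\norm{\nabla\psi}_{L^2(\O\times(0,T))}$ multiplied by a suitable $H^1$-norm of $w$. To close the estimate uniformly in $g_{\min}$, I would integrate the $t_0$-version of the inequality over $t_0\in(0,T)$: the term $\tfrac12\int_\O|\nabla\psi(t_0)|^2_\Lambda\,\ud x$ integrates to a constant multiple of $\norm{\nabla\psi}_{L^2(\O\times(0,T))}^2$, producing a quadratic inequality in this quantity whose resolution gives a bound $\norm{\nabla\psi}_{L^2(\O\times(0,T))}\le C(\norm{w}_{L^2}+\norm{\partial_t w}_{L^2}+\norm{\nabla w}_{L^2})$ with $C$ depending only on $T,\diam{\O},\underline{\lambda},\overline{\lambda}$. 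Substitution into the $t_0=0$ bound produces \eqref{parnllemme5}.

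The principal obstacle is precisely this uniformity of $C_0$ in $g_{\min}$: natural multipliers such as $\partial_t\psi$ or $-\dive(\Lambda\nabla\psi)$ alone introduce weights $1/g$ or $1/(1-g)$ that blow up as $g_{\min}\to 0$, while testing with $\psi$ forces derivatives of $g$ to appear. The symmetric choice $\partial_t\psi+\dive(\Lambda\nabla\psi)$ dodges this by reproducing exactly the weighted norm featured in \eqref{parnllemme5}, and the non-negative cross term $\tfrac12\int|\nabla\psi(t_0)|^2_\Lambda$ is precisely what is needed to bootstrap uniform control on $\nabla\psi$ in $L^2(\O\times(0,T))$.
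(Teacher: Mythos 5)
Your part (iv) is essentially the paper's own argument: multiply \eqref{parnllemme1} by $\partial_t\psi+\dive(\Lambda\nabla\psi)$, integrate over $\O\times(s,T)$, identify the cross term with $\tfrac12\int_\O\Lambda\nabla\psi(\cdot,s)\cdot\nabla\psi(\cdot,s)\ud x$, integrate the resulting inequality in $s$ to get a bound on $\norm{\nabla\psi}_{L^2(\O\times(0,T))}$ that is uniform in $g_{\rm min}$, and then take $s=0$ (where $w(\cdot,0)=0$) to conclude. Two remarks there: the identity for the cross term, i.e.\ \eqref{est.dtudiv}, is \emph{not} free under the regularity available in (i) — $\psi$ is not smooth in time, $\Lambda$ is only measurable — and the paper justifies it by a time-mollification argument inside Lemma \ref{parnlauxi2}; your ``recognition as a total time derivative'' is the formal computation and needs this justification. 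Also, no absorption/quadratic-inequality step is really needed: after integrating in $s$ one simply drops the nonnegative weighted term and divides by $\norm{\nabla\psi}_{L^2}$, which is what the paper does in \eqref{parnllemme8}--\eqref{parnllemme9}.

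The genuine gap is in the existence/regularity part (i)--(iii). After your time reversal the equation reads $\partial_t\tilde\psi-\tfrac{\tilde g}{1-\tilde g}\dive(\Lambda\nabla\tilde\psi)=-\tfrac{\tilde w}{1-\tilde g}$: the rough coefficient multiplies the elliptic operator (equivalently, dividing through, a merely measurable weight $\Phi(x,t)$ sits in front of $\partial_t$), so the problem is \emph{not} in the divergence-form variational framework, and the citation of \cite{lio65} together with ``classical parabolic regularity theory'' does not apply. What you need is precisely maximal $L^2$-regularity — $\partial_t\psi\in L^2(\O\times(0,T))$ \emph{and} $\dive(\Lambda\nabla\psi)\in L^2(\O\times(0,T))$ — for a merely bounded measurable, symmetric, uniformly elliptic $\Lambda$ on an arbitrary bounded open $\O$, with an $L^\infty$ weight on $\partial_t$; the paper remarks explicitly that such a statement does not seem to be available in the literature, and it supplies it by a contraction fixed-point argument (freezing the time weight at $\phi^\ast$ and iterating) built on the self-contained Lemma \ref{parnlauxi2}, whose estimates \eqref{est.dtu} and \eqref{est.dtudiv} are proved by mollification in time. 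If you prefer to keep a Galerkin route, it can be made to work, but you must say how: solve the time-dependent ODE system with the weighted mass matrix $\int_\O\Phi(\cdot,t)\phi_i\phi_j$, test with the \emph{time derivative} of the Galerkin approximation (this is where the symmetry of $\Lambda$ is used to produce $\tfrac12\tfrac{d}{dt}\int_\O\Lambda\nabla u_m\cdot\nabla u_m$), deduce uniform bounds on $\partial_t u_m$ in $L^2$ and on $u_m$ in $L^\infty(0,T;H^1_0(\O))$, and only then recover $\dive(\Lambda\nabla\psi)=\Phi\,\partial_t\psi-f\in L^2$ from the limit equation. As written, the key technical content of (i)--(iii) is asserted rather than proved.
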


\begin{proof}
After dividing through by $g$, observe that \eqref{parnllemme1} is equivalent to
\begin{equation}\label{eq:equivpde}
\Phi(x,t)\partial_t \psi(x,t) + \dive(\Lambda(x)\nabla \psi(x,t)) = f(x,t),
\end{equation}
where $\Phi\in L^{\infty}(\O\times(0,T))$ satisfies $0 < \phi_\ast \leq \Phi(x,t) \leq \phi^\ast$ for
a.e. $(x,t)\in\O\times(0,T)$ and $f\in L^{\infty}(\O\times(0,T))$. We first show the existence
of a solution $\psi$ to \eqref{eq:equivpde} satisfying (i) and (ii).

Let $W := \{ v\in C^0([0,T]; H^1_0(\O)) \,|\, \partial_t v \in L^2(\O\times(0,T))\mbox{ and
$v(\cdot,T)=0$} \}$. Define
$T: W \to W$, where $T(v) = u$ is such that for all $w\in L^2(0,T; H^1_0(\O))$,
\begin{multline}\label{eq:constantcoeff}
\int_0^T\int_\O\left( \phi^\ast w(x,t)\partial_t u(x,t) - \Lambda(x)\nabla u(x,t)\cdot\nabla w(x,t)\right)\ud x \ud t\\
= \int_0^T\int_\O\left(f(x,t) + (\phi^\ast - \Phi(x,t))\partial_t v(x,t)\right)w(x,t)\ud x\ud t.
\end{multline}
Existence of such a $u\in W$ is assured thanks to Lemma \ref{parnlauxi2} below.
Endowing $W$ with the norm
\[
\norm{v}_{W} := \left[\sup_{\tau\in[0,T]}\left(\norm{\partial_t v}_{L^2(\O\times(\tau,T))}^2+\frac{\underline{\lambda}}{\phi^\ast }\norm{\nabla v(\cdot,\tau)}_{L^2(\O)^d}^2\right)\right]^{1/2},
\]
dividing \eqref{eq:constantcoeff} by $\phi^\ast$, noticing that $\sup_{(x,t)\in \O\times (0,T)}
\left|\frac{\phi^\ast-\Phi(x,t)}{\phi^\ast}\right|\le \frac{\phi^\ast-\phi_\ast}{\phi^\ast}<1$
and using \eqref{est.dtu}, we see that $T$ is a contraction. It therefore has a unique fixed point $\psi\in W$
that satisfies (i)--(iii).

It remains to verify \eqref{parnllemme5}. Taking $s,\tau\in[0,T]$, we have
\[
\int_s^\tau \int_\Omega w( x,t)\dive(\Lambda\nabla \psi)( x,t)\ud x\ud t = -\int_s^\tau \int_\Omega \Lambda(x)\nabla w( x,t)\cdot \nabla \psi( x,t)\ud x\ud t,
\]
and
\begin{multline*}
\int_s^\tau \int_\Omega w(x,t) \partial_t \psi(x,t)\ud x\ud t = \int_\Omega  (w(x,\tau) \psi(x,\tau) - w(x,s) \psi(x,s))\ud x \\
-\int_s^\tau \int_\Omega \psi(x,t)\partial_t w(x,t) \ud x\ud t.
\end{multline*}
Multiplying \eqref{parnllemme1} by $\partial_t \psi( x,t)+\dive(\Lambda\nabla \psi)( x,t)$, integrating on $\O\times (s,T)$ for $s\in [0,T]$, and using \eqref{est.dtudiv},
$ \psi(\cdot,T) = 0$ and $\nabla \psi(\cdot,T) = 0$, we obtain
\begin{multline}
{\frac 1 2}\int_\Omega  \Lambda(x)\nabla \psi( x,s)\cdot\nabla \psi( x,s) \ud x  \\
+ \int_s^T \int_\Omega \left((1 - g( x,t)) \Bigl(\partial_t \psi( x,t)\Bigr)^2 + g( x,t) \Bigl(\dive(\Lambda\nabla\psi)( x,t)\Bigr)^2 \right) \ud x \ud t  \\ 
= - \int_s^T \int_\Omega \Lambda(x)\nabla w( x,t) \cdot \nabla \psi( x,t) \ud x \ud t-\int_\Omega  w(x,s) \psi(x,s)\ud x -\int_s^T \int_\Omega \psi(x,t)\partial_t w(x,t) \ud x\ud t.
\label{parnllemme7}\end{multline}
Integrating \eqref{parnllemme7} with respect to $s\in (0,T)$ leads to
\begin{multline}
{\frac 1 2}\int_0^T \int_\Omega  \Lambda(x)\nabla \psi( x,s)\cdot\nabla \psi( x,s)  \ud x \ud s \le  
T \int_0^T \int_\Omega \vert\Lambda(x)\nabla w( x,t) \cdot \nabla \psi( x,t)\vert \ud x \ud t \\ + \int_0^T\int_\Omega  \vert w(x,s) \psi(x,s)\vert\ud x\ud s + T\int_0^T \int_\Omega \vert \psi(x,t)\partial_t w(x,t)\vert \ud x\ud t.
\label{parnllemme8}\end{multline}
We then apply the Cauchy-Schwarz and Poincar\'e inequalities, which leads to
\begin{multline}
{\frac {\underline{\lambda}} 2}\norm{\nabla \psi}_{L^2(\O\times(0,T))}\\
\le  
T \overline{\lambda} \norm{\nabla w}_{L^2(\O\times(0,T))} 
+ \diam{\O}\left(\norm{w}_{L^2(\O\times(0,T))}  + T\norm{\partial_t  w}_{L^2(\O\times(0,T))}\right).
\label{parnllemme9}\end{multline}
Letting $s=0$ in \eqref{parnllemme7}, recalling that $w(\cdot,0) = 0$, and using \eqref{parnllemme9} gives
\begin{align*}
\int_0^T \int_\Omega \Bigl((1 - &g( x,t)) (\partial_t \psi( x,t))^2 + g( x,t) (\dive\Lambda\nabla\psi( x,t))^2 \Bigr) \ud x \ud t  \\
\le {}&
\frac 2 {\underline{\lambda}}\left(\overline{\lambda}\norm{\nabla w}_{L^2(\O\times(0,T))} + \diam{\O}\norm{\partial_t  w}_{L^2(\O\times(0,T))}\right) \\ 
&\times\left(T\overline{\lambda} \norm{\nabla w}_{L^2(\O\times(0,T))} + \diam{\O}  (\norm{w}_{L^2(\O\times(0,T))}  + T\norm{\partial_t  w}_{L^2(\O\times(0,T))}\right),
\end{align*}
which implies \eqref{parnllemme5}.
\end{proof}

The following lemma states the time regularity of the solution of a linear backwards
parabolic problem with sufficiently regular data.
It may be that this lemma can be proved by using the Hille--Yoshida theorem,
since the regularity of the solution is coherent with those of the Hille--Yoshida theory,
but this exact result, with low regularity assumptions on $\O$ or $\Lambda$,
does not seem to exist in the literature. We propose a self-contained proof, which is
probably shorter than checking that the Hille--Yoshida framework applies. 

% lemma for above theorem
\begin{lemma}\label{parnlauxi2}
Let $T>0$, and let $\O$ be a bounded open subset of $\RR^d$ ($d\in\NN$). Assume Hypothesis \eqref{hyplambda}. Let $h\in L^2(\Omega\times (0,T))$, and let $u\in L^2(0,T;H^1_0(\O))$ with $\partial_t u\in L^2(0,T;H^{-1}(\O))$ such that $u(\cdot,T) = 0$ be the standard weak solution of
the backwards problem $\partial_t u + \dive(\Lambda\nabla u)=h$, that is
\begin{multline}
 \forall v\in L^2(0,T;H^1_0(\O)),\\
\int_0^T\left(\langle \partial_t u(\cdot,t),v(\cdot,t)\rangle_{H^{-1},H^1_0} - \int_\O \Lambda(x) \nabla u(x,t)\cdot\nabla v(x,t)\ud x\right) \ud t =  \int_0^T \int_\O  h(x,t) v(x,t)\ud x \ud t.
\label{eq:parabstd}\end{multline}
Then $\partial_t u \in L^2(\Omega\times (0,T))$, $\dive(\Lambda\nabla u) \in L^2(\Omega\times (0,T))$, $u\in C^0([0,T];H^1_0(\O))$ and
\begin{equation}\label{est.dtu}
\sup_{\tau\in [0,T]}
\left(\norm{\partial_t u}_{L^2(\O\times (\tau,T))}^2+\int_\O \Lambda(x)
\nabla u(x,\tau)\cdot\nabla u(x,\tau)\ud x\right)\le \norm{h}_{L^2(\O\times (0,T))}^2.
\end{equation}
Furthermore, for all $s<\tau\in [0,T]$,
\begin{eqnarray} \label{est.dtudiv}
\int_s^\tau \int_\Omega \partial_t u( x,t)\dive(\Lambda\nabla u)( x,t)\ud x\ud t = -\frac 1 2 \int_\Omega \Lambda(x)\nabla u( x,\tau)\cdot\nabla u( x,\tau)\ud x\\
+ \frac 1 2 \int_\Omega  \Lambda(x)\nabla u( x,s)\cdot\nabla u( x,s)\ud x\nonumber.
\end{eqnarray}
\end{lemma}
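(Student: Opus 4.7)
The strategy is to convert the backwards problem into a forward parabolic problem via the time reversal $v(x,t) := u(x, T-t)$, so that $v \in L^2(0,T;H^1_0(\O))$ with $\partial_t v \in L^2(0,T;H^{-1}(\O))$ is the unique weak solution of
\begin{equation*}
\partial_t v - \dive(\Lambda\nabla v) = \tilde h,\qquad v(\cdot,0)=0,
\end{equation*}
with $\tilde h(x,t) := -h(x, T-t)$. All four claimed properties of $u$ transfer, under $t\mapsto T-t$, from analogous forward-time properties of $v$, so it suffices to prove them for $v$.

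To establish $\partial_t v \in L^2(\O\times(0,T))$, I would work through a Galerkin approximation using the eigenbasis $(e_k)_{k\in\NN}$ of $w\mapsto -\dive(\Lambda\nabla w)$ on $H^1_0(\O)$. This basis exists via the spectral theory of the compact self-adjoint inverse operator obtained from Lax--Milgram applied to the bilinear form $(w_1,w_2)\mapsto \int_\O\Lambda\nabla w_1\cdot\nabla w_2$, which is coercive on $H^1_0(\O)$ by \eqref{hyplambda}. The Galerkin approximant $v_n \in \mathrm{span}(e_1,\ldots,e_n)$ is smooth in time, so using $\partial_t v_n$ as a test function is rigorous and produces the identity
\begin{equation*}
\int_\O (\partial_t v_n(x,t))^2 \ud x + \frac{1}{2}\frac{\mathrm d}{\mathrm dt}\int_\O\Lambda(x)\nabla v_n(x,t)\cdot\nabla v_n(x,t)\ud x = \int_\O \tilde h(x,t)\partial_t v_n(x,t)\ud x.
\end{equation*}
Young's inequality and integration over $(0,\tau)$, using $v_n(\cdot,0)=0$, yield the $n$-uniform bound
\begin{equation*}
\norm{\partial_t v_n}^2_{L^2(\O\times (0,\tau))} + \int_\O\Lambda\nabla v_n(\cdot,\tau)\cdot\nabla v_n(\cdot,\tau)\ud x \le \norm{\tilde h}^2_{L^2(\O\times (0,\tau))}.
\end{equation*}
Weak compactness combined with uniqueness of the weak solution identifies the limit with $v$, and transferring back to $u$ gives \eqref{est.dtu}. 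The equation itself then forces $\dive(\Lambda\nabla u)=h-\partial_t u \in L^2(\O\times(0,T))$.

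For the $C^0([0,T];H^1_0(\O))$ regularity and the identity \eqref{est.dtudiv}, I would use a time-mollification of $u$: extending $u$ suitably outside $[0,T]$ and setting $u_\eps = \rho_\eps\ast_t u$, the regularized function is $C^1$ in time with values in $H^1_0(\O)$, so the identity
\begin{equation*}
\int_s^\tau\int_\O\partial_t u_\eps\,\dive(\Lambda\nabla u_\eps)\ud x\ud t = -\frac{1}{2}\int_\O\Lambda\nabla u_\eps(\cdot,\tau)\cdot\nabla u_\eps(\cdot,\tau)\ud x + \frac{1}{2}\int_\O\Lambda\nabla u_\eps(\cdot,s)\cdot\nabla u_\eps(\cdot,s)\ud x
\end{equation*}
holds exactly. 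Letting $\eps\to 0$ using $\partial_t u_\eps \to \partial_t u$ and $\dive(\Lambda\nabla u_\eps)\to \dive(\Lambda\nabla u)$ in $L^2$, one deduces that $\tau\mapsto \int_\O\Lambda\nabla u(\cdot,\tau)\cdot\nabla u(\cdot,\tau)\ud x$ is continuous. Combined with weak $H^1_0$-continuity --- a consequence of the strong $L^2$-continuity given by $\partial_t u\in L^2(\O\times(0,T))$ and the $L^\infty(0,T;H^1_0)$ bound --- norm continuity yields strong continuity $u\in C^0([0,T];H^1_0(\O))$, and passing to the limit in the mollified identity establishes \eqref{est.dtudiv}.

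The main obstacle is precisely the strong $H^1_0$-continuity, since $\partial_t u\in L^2(\O\times(0,T))$ alone does not deliver it; we genuinely need to complement the easy weak continuity with continuity of the weighted gradient norm, which is exactly what the energy identity provides. Everything else is a careful adaptation of standard parabolic regularity arguments to the merely measurable coefficient $\Lambda$, and this causes no serious difficulty thanks to the uniform ellipticity assumption \eqref{hyplambda}.
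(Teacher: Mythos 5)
Your proposal is correct in substance but follows a genuinely different route from the paper for the main estimate. The paper never reverses time and never introduces a Galerkin scheme: it works directly on the weak formulation \eqref{eq:parabstd}, convolving $u$ in time with a one-sided kernel $\rho_n$ supported in $[-1,0]$ and choosing the test function $v(x,t)=\int_0^T\rho_n(s-t)\partial_t u_n(x,s)\ud s$, which vanishes at $t=0$ by the support condition; the resulting identity gives the $n$-uniform bound on $\partial_t u_n$ in $L^2(\O\times(0,T))$ and hence $\partial_t u\in L^2$, $\dive(\Lambda\nabla u)=\partial_t u-h\in L^2$, with \eqref{est.dtu} obtained by restarting the computation at an arbitrary $\tau$ and passing to weak limits, and \eqref{est.dtudiv} by the same regularization. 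Your spectral Galerkin argument (eigenbasis of $w\mapsto-\dive(\Lambda\nabla w)$, test with $\partial_t v_n$, Young, then weak compactness) delivers the same bound but at the price of invoking uniqueness of the weak solution to identify the Galerkin limit with the given $u$ — harmless, since uniqueness in the class $L^2(0,T;H^1_0(\O))$ with $\partial_t\in L^2(0,T;H^{-1}(\O))$ is standard, but it is an extra ingredient the paper's intrinsic mollification avoids. The paper's approach also has the small advantage that the one-sided kernel keeps all time averages inside $[0,T]$, whereas your second step requires extending $u$ outside $[0,T]$; be aware that extension by zero at $t=0$ is not allowed (only $u(\cdot,T)=0$ is known), so you should use an even reflection (which preserves $\partial_t u\in L^2$ and $\dive(\Lambda\nabla u)\in L^2$ locally) or one-sided mollifiers as in the paper. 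Finally, your last step deserves one more line: passing to the limit in the mollified identity only gives continuity of $\tau\mapsto\int_\O\Lambda\nabla u(\cdot,\tau)\cdot\nabla u(\cdot,\tau)\ud x$ through Lebesgue points, and weak $H^1_0$-continuity alone only yields lower semicontinuity of this energy at exceptional times (including the endpoints); the clean fix is to show that $(u_\eps)_{\eps}$ is Cauchy in $C^0([0,T];H^1_0(\O))$ by applying the same identity to differences $u_\eps-u_{\eps'}$ with a common Lebesgue point as base time (alternatively, your sharper Galerkin bound with $\norm{\tilde h}_{L^2(\O\times(0,\tau))}$ on the right handles the terminal time $t=T$ of $u$ directly). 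With that point made explicit, both parts of your argument are complete and reach the same conclusions as the paper.
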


\begin{proof} Let $\rho\in C^\infty_c(\RR)$ with support in $[-1,0]$, such that $\rho\ge 0$ and $\int_{-1}^0 \rho(s)\ud s = 1$. For $n\in\NN$, define $\rho_n(s) = n\rho(ns )$
and $u_n(x,t) = \int_0^T  \rho_n(t - s) u(x,s)\ud s$. Take
\[
 v(x,t) = \int_0^T  \rho_n(s - t) \partial_t u_n(x,s)\ud s
\]
as test function in \eqref{eq:parabstd}. Since $v$ is a regular function with respect to time and $v(\cdot,0) = 0$ (thanks to the support of $\rho$), we obtain  $T_1 + T_2 = T_3$, with
\[
 T_1 = \int_0^T\int_\O u(x,t) \int_0^T  \rho_n'(s - t)\partial_t u_n(x,s)\ud s\ud x\ud t = \int_0^T  \int_\O(\partial_t u_n(x,s))^2\ud x \ud s,
\]
\begin{align*}
 T_2 &= -\int_0^T\int_\O \Lambda(x) \nabla u(x,t)\cdot\int_0^T  \rho_n(s - t)\nabla \partial_t u_n(x,s)\ud s\ud x \ud t\\
 &= -\frac 1 2 \int_0^T \int_\O  \partial_t(\Lambda(x)\nabla u_n(x,\cdot)\cdot \nabla u_n(x,\cdot))(s)\ud x\ud s\\ 
&= -\frac 1 2 \left(\int_\O \Lambda(x)\nabla u_n(x,T)\cdot \nabla u_n(x,T)\ud x - \int_\O \Lambda(x)\nabla u_n(x,0)\cdot \nabla u_n(x,0)\ud x\right)
\end{align*}
and
\[
 T_3 = \int_0^T \int_\O  h(x,t) \int_0^T  \rho_n(s - t)\partial_t u_n(x,s)\ud s\ud x \ud t =  \int_0^T  \int_\O h_n(x,s) \partial_t u_n(x,s)\ud s\ud x,
\]
where $h_n(x,s) = \int_0^T  h(x,t)  \rho_n(s - t)\ud t$. Observing that $u_n(\cdot,T) = 0$ and $\Vert h_n\Vert_{L^2(\O\times(0,T))} \le \Vert h\Vert_{L^2(\O\times(0,T))}$, Young's inequality on the right-hand side of $T_1+T_2=T_3$ yields
\begin{equation}\label{est.dtu.1}
 \int_0^T  \int_\O(\partial_t u_n(x,s))^2\ud x \ud s 
+ \int_\O \Lambda(x)\nabla u_n(x,0)\cdot \nabla u_n(x,0)\ud x
\le  \Vert h\Vert_{L^2(\O\times(0,T))}^2.
\end{equation}
Hence, $(\partial_t u_n)_{n\in\NN}$ is bounded in $L^2(\O\times (0,T))$.
Since $u_n\to u$ in $L^2(\O\times (0,T))$, this shows that $\partial_t u\in L^2(\O\times (0,T))$.
The PDE \eqref{eq:parabstd} gives $\dive(\Lambda\nabla u)=\partial_t u-h\in L^2(\O\times (0,T))$.
Finally, we obtain \eqref{est.dtu} by repeating the reasoning leading
to \eqref{est.dtu.1}, starting with
an arbitrary time $\tau\in [0,T]$ instead of $0$ and by passing to the weak limits
in the corresponding inequalities. Equation \eqref{est.dtudiv} is established by repeating the above computations using the same regularization in time.

\end{proof}

\thanks{\textbf{Acknowledgements}: The authors thank Thierry Gallou\"et and Cl\'ement Canc\`es for fruitful discussions on compensated compactness results.}

% --- BIBLIOGRAPHY ---

\bibliography{stabib}{}

\begin{thebibliography}{10}

\bibitem{al83}
H.~W. Alt and S.~Luckhaus.
\newblock Quasilinear elliptic-parabolic differential equations.
\newblock {\em Math. Z.}, 183(3):311--341, 1983.

\bibitem{abko09}
B.~Andreianov, M.~Bendahmane, K.~H. Karlsen, and S.~Ouaro.
\newblock Well-posedness results for triply nonlinear degenerate parabolic
  equations.
\newblock {\em J. Differential Equations}, 247(1):277--302, 2009.

\bibitem{acm15}
B.~Andreianov, C.~Canc{\`e}s, and A.~Moussa.
\newblock {A nonlinear time compactness result and applications to
  discretization of degenerate parabolic-elliptic PDEs}.
\newblock {\em https://hal.archives-ouvertes.fr/hal-01142499}, page 46p, 2015.
\newblock Submitted.

\bibitem{aub63}
J.-P. Aubin.
\newblock Un th\'eor\`eme de compacit\'e.
\newblock {\em C. R. Acad. Sci. Paris}, 256:5042--5044, 1963.

\bibitem{bc81}
P.~B{\'e}nilan and M.G. Crandall.
\newblock The continuous dependence on {$\varphi $} of solutions of
  {$u_{t}-\Delta \varphi (u)=0$}.
\newblock {\em Indiana Univ. Math. J.}, 30(2):161--177, 1981.

\bibitem{bdmp84}
M.~Bertsch, P.~de~Mottoni, and L.~A. Peletier.
\newblock Degenerate diffusion and the {S}tefan problem.
\newblock {\em Nonlinear Anal.}, 8(11):1311--1336, 1984.

\bibitem{bkp85}
M.~Bertsch, R.~Kersner, and L.~A. Peletier.
\newblock Positivity versus localization in degenerate diffusion equations.
\newblock {\em Nonlinear Anal.}, 9(9):987--1008, 1985.

\bibitem{bp05}
D.~Blanchard and A.~Porretta.
\newblock Stefan problems with nonlinear diffusion and convection.
\newblock {\em J. Differential Equations}, 210(2):383--428, 2005.

\bibitem{BF}
F.~Boyer and P.~Fabrie.
\newblock {\em Mathematical tools for the study of the incompressible
  {N}avier-{S}tokes equations and related models}, volume 183 of {\em Applied
  Mathematical Sciences}.
\newblock Springer, New York, 2013.

\bibitem{carr1999}
J.~Carrillo.
\newblock Entropy solutions for nonlinear degenerate problems.
\newblock {\em Arch. Ration. Mech. Anal.}, 147(4):269--361, 1999.

\bibitem{NPV11}
E.~Di~Nezza, G.~Palatucci, and E.~Valdinoci.
\newblock Hitchhiker's guide to the fractional {S}obolev spaces.
\newblock {\em Bull. Sci. Math.}, 136(5):521--573, 2012.

\bibitem{db93}
E.~DiBenedetto.
\newblock {\em Degenerate parabolic equations}.
\newblock Universitext. Springer-Verlag, New York, 1993.

\bibitem{poly}
J.~Droniou.
\newblock Int\'egration et espaces de sobolev \`a valeurs vectorielles.
\newblock Polyco\-pi\'es de l'Ecole Doctorale de Math\'ematiques-Informatique
  de Marseille, available at \texttt{http://www-gm3.univ-mrs.fr/polys}, 2001.

\bibitem{dey14}
J.~Droniou and R.~Eymard.
\newblock Uniform-in-time convergence result of numerical methods for
  non-linear parabolic equations.
\newblock {\em Numer. Math.}, 2015.
\newblock To appear, DOI:10.1007/s00211-015-0733-6.

\bibitem{degh13}
J.~Droniou, R.~Eymard, T.~Gallou{\"e}t, and R.~Herbin.
\newblock Gradient schemes: a generic framework for the discretisation of
  linear, nonlinear and nonlocal elliptic and parabolic equations.
\newblock {\em Math. Models Methods Appl. Sci. (M3AS)}, 23(13):2395--2432,
  2013.

\bibitem{dt14}
J.~Droniou and K.S. Talbot.
\newblock On a miscible displacement model in porous media flow with measure
  data.
\newblock {\em SIAM J. Math. Anal.}, 46(5):3158--3175, 2014.

\bibitem{ghp97}
M.~Guedda, D.~Hilhorst, and M.~A. Peletier.
\newblock Disappearing interfaces in nonlinear diffusion.
\newblock {\em Adv. Math. Sci. Appl.}, 7(2):695--710, 1997.

\bibitem{had23}
J.~Hadamard.
\newblock {\em Lectures on {Cauchy's} Problem in Linear Partial Differential
  Equations}.
\newblock Dover Phoenix, New York, NY, USA, 1923.

\bibitem{kaz98}
A.V. Kazhikhov.
\newblock {\em Recent developments in the global theory of two-dimensional
  compressible {N}avier-{S}tokes equations}, volume~25 of {\em Seminar on
  Mathematical Sciences}.
\newblock Keio University, Department of Mathematics, Yokohama, 1998.

\bibitem{kimpoz13}
I.C. Kim and N.~Po{\v{z}}{\'a}r.
\newblock Nonlinear elliptic-parabolic problems.
\newblock {\em Arch. Ration. Mech. Anal.}, 210(3):975--1020, 2013.

\bibitem{kp10}
J.~Kinnunen and M.~Parviainen.
\newblock Stability for degenerate parabolic equations.
\newblock {\em Adv. Calc. Var.}, 3(1):29--48, 2010.

\bibitem{ll65}
J.~Leray and J.-L. Lions.
\newblock Quelques r{\'e}sultats de {V}i\v sik sur les probl{\`e}mes
  elliptiques non lin{\'e}aires par les m{\'e}thodes de {M}inty-{B}rowder.
\newblock {\em Bull. Soc. Math. France}, 93:97--107, 1965.

\bibitem{lio65}
J.-L. Lions.
\newblock Sur certaines \'equations paraboliques non lin\'eaires.
\newblock {\em Bull. Soc. math. France}, 93:155--175, 1965.

\bibitem{lp15}
T.~Lukkari and M.~Parviainen.
\newblock Stability of degenerate parabolic {C}auchy problems.
\newblock {\em Commun. Pure Appl. Anal.}, 14(1):201--216, 2015.

\bibitem{mr10}
W.~Merz and P.~Rybka.
\newblock Strong solutions to the {R}ichards equation in the unsaturated zone.
\newblock {\em J. Math. Anal. Appl.}, 371(2):741--749, 2010.

\bibitem{minty}
G.J. Minty.
\newblock On a “monotonicity” method for the solution of non- linear
  equations in {B}anach spaces.
\newblock {\em Proceedings of the National Academy of Sciences of the United
  States of America}, 50(6):1038, 1963.

\bibitem{moussa}
A.~Moussa.
\newblock Some variants of the classical {A}ubin-{L}ions {L}emma.
\newblock {\em arXiv:1401.7231}, 2014.

\bibitem{otto}
F.~Otto.
\newblock {$L^1$}-contraction and uniqueness for quasilinear elliptic-parabolic
  equations.
\newblock {\em J. Differential Equations}, 131(1):20--38, 1996.

\bibitem{rich31}
L.A. Richards.
\newblock Capillary conduction of liquids through porous mediums.
\newblock {\em J. Appl. Phys.}, 1(5):318--333, 1931.

\bibitem{sim87}
J.~Simon.
\newblock Compact sets in the space {$L^p(0,T;B)$}.
\newblock {\em Ann. Mat. Pura Appl. (4)}, 146:65--96, 1987.

\bibitem{stam65}
G.~Stampacchia.
\newblock Le probl\`eme de {D}irichlet pour les \'equations elliptiques du
  second ordre \`a coefficients discontinus.
\newblock {\em Ann. Inst. Fourier (Grenoble)}, 15(fasc. 1):189--258, 1965.

\end{thebibliography}
\bibliographystyle{plain}

\end{document}